\newcommand{\E}{\mathbb{E} }
\newcommand{\n}{\mathfrak{n} }
\newcommand{\m}{\mathfrak{m} }
\newcommand{\p}{\mathfrak{p} }
\newcommand{\bX}{\mathbf{X} }
\newcommand{\Oc}{\mathcal{O} }
\newcommand{\Fc}{\mathcal{F} }
\newcommand{\Gc}{\mathcal{G} }
\newcommand{\Mr}{\mathcal{M} }
\newcommand{\Z}{\mathbb{Z} }
\newcommand{\C}{\mathcal{C} }
\newcommand{\rt}{\rightarrow}
\newcommand{\ov}{\overline}
\newcommand{\image}{\operatorname{image}}
\newcommand{\coker}{\operatorname{coker}}
\newcommand{\Reg}{\operatorname{Reg}}
\newcommand{\gr}{\operatorname{gr}}
\newcommand{\height}{\operatorname{height}}
\newcommand{\Ass}{\operatorname{Ass}}
\newcommand{\degree}{\operatorname{degree}}
\newcommand{\Min}{\operatorname{Min}}
\newcommand{\Spec}{\operatorname{Spec}}
\newcommand{\mSpec}{\operatorname{m-Spec}}
\newcommand{\Supp}{\operatorname{Supp}}
\newcommand{\rad}{\operatorname{rad}}
\newcommand{\charr}{\operatorname{char}}
\newcommand{\Ext}{\operatorname{Ext}}
\theoremstyle{plain}
\newtheorem{theorem}{Theorem}[section]
\newtheorem{corollary}[theorem]{Corollary}
\newtheorem{lemma}[theorem]{Lemma}
\newtheorem{proposition}[theorem]{Proposition}
\theoremstyle{definition}
\newtheorem{definition}[theorem]{Definition}
\newtheorem{remark}[theorem]{Remark}
\newtheorem{example}[theorem]{Example}
\theoremstyle{remark}
\begin{document}

\title[Associated primes of Local cohomology modules]{Associated primes of Local cohomology modules over Regular rings}
\author{Tony~J.~Puthenpurakal}
\date{\today}
\address{Department of Mathematics, IIT Bombay, Powai, Mumbai 400 076}

\email{tputhen@math.iitb.ac.in}

\subjclass{Primary 13D45; Secondary 13D02, 13H10 }
\keywords{local cohomology, associate primes, D-modules}
 \begin{abstract}
Let $R$ be an excellent regular ring of dimension $d$ containing a field $K$ of characteristic zero. Let $I$ be an ideal in $R$. We show that $\Ass H^{d-1}_I(R)$ is a finite set. As an application we show that if $I$ is an  ideal of height $g$  with $\height Q = g$ for all minimal primes of $I$ then for all but finitely many primes $P \supseteq I$ with $\height P \geq g +2$, the topological space $\Spec^\circ(R_P/IR_P)$ is connected.
\end{abstract}
 \maketitle
\section{introduction}
Throughout this paper $R$ is a commutative Noetherian ring. If $M$ is an $R$-module and if $I$ is an ideal in $R$, we denote by $H^i_I(M)$ the  $i^{th}$ local cohomology module of $M$ with respect to $I$.

The following conjecture  is due to Lyubeznik  \cite{Lyu-3}; 

\s \label{conj}\textbf{Conjecture:} If $R$ is a regular ring, then each local cohomology module $H^i_I(R)$ has
finitely many associated prime ideals.

There are many cases where this conjecture is true: by work of Huneke and Sharp \cite{HuSh},
for regular rings $R$ of prime characteristic; by work of Lyubeznik, for regular local and
affine rings of characteristic zero \cite{Lyu-1},  for unramified regular local rings of mixed
characteristic \cite{Lyu-4}. It is also true for smooth $\mathbb{Z}$-algebras by work of Bhatt et.al \cite{BB}.

 In \cite{Lyu-3} Lyubeznik especially asked whether \ref{conj} is valid for a regular ring $R$ containing a field of characteristic zero. It is easy to give examples where existing techniques, to show finiteness of associate primes of local cohomology modules, fail:
 \begin{example}
 \begin{enumerate}[\rm (a)]
 \item
 Let $(S,\m)$ be a complete local domain of dimension $d \geq 2$ containing a field of characteristic zero. Assume $S$ is not regular. Let the singular locus be defined by the ideal $J$. Notice $J \neq 0$. Let $x \in J$ be non-zero. Set $R = S_x$. Then $R$ is a  domain of dimension $d-1$, see \cite[Lemma 1, p.\ 247]{Mat}. Also clearly $R$ is regular.
 
 \item
 Let $T$ be a regular domain as above  containing a field $K$ of characteristic zero. Let $f \in K[X_1,\ldots, X_n]$ be a smooth polynomial. Then $R = T[X_1,\ldots, X_n]/(f)$ is a regular ring. 
 \end{enumerate}
 In both the examples above we do not know whether $\Ass_R (H^i_I(R))$ is a finite set for all ideals $I$ of $R$.
 \end{example}
In all the essential cases where finiteness of associated primes is known, the local cohomology modules of $R$ have some additional global structure. If $R$ is of characteristic $p$ then local cohomology modules have structure of $F$-modules, \cite{Lyu-2}. In characteristic zero,  for complete local rings  and smooth affine algebras over algebraically closed field, local cohomology modules have an appropriate $D$-module structure. For smooth $\Z$-algebras the authors in \cite{BB} use a rather clever mixture of $D$-module and $F$-module theory. For a general regular ring containing a field of characteristic zero there is no obvious structure that local cohomology modules satisfy that we can exploit to prove finiteness of associate primes. 

For the rest of the paper assume that $R$ contains a field of characteristic zero.
For simplicity we assume that $\dim R = d$ is finite.
By Grothendieck vanishing theorem $H^i_I(R) = 0$ for all $i > d$, see \cite[6.1.2]{BSh}. In general for a Noetherian ring $R$ of dimension $d$
the set $\Ass_R(H^d_I(R))$ is  finite, see  \cite[3.11]{BRS} (also see \cite[2.3]{TM}). If $R$ is a regular ring of dimension $d$ and $I$ is an ideal in $R$ then using the Hartshorne-Lichtenbaum theorem, cf. \cite[14.1]{a7}  it is easy to prove that
$$\Ass_R (H^d_I(R)) = \left\{ P  \left|  P \in \Min R/I \ \text{and} \ \height P = d   \right.   \right \}.  $$ 
The main result of this paper is
\begin{theorem}\label{main-T}
Let $R$ be an excellent regular ring of dimension $d$ containing a field of characteristic zero.  Let $I$ be an ideal in $R$. Then
$\Ass_R (H^{d-1}_I(R))$ is a finite set.
\end{theorem}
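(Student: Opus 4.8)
The plan is to reduce to the case where $R$ is local and complete, and then exploit the fact that for a complete regular local ring containing a field of characteristic zero we do have a $D$-module structure available. The first step is a standard localization/finiteness argument: $\Ass_R H^{d-1}_I(R)$ is finite if and only if it is finite after localizing at each maximal ideal (together with a global Noetherianity argument showing the union over maximal ideals does not blow up), so I may assume $(R,\m)$ is regular local of dimension $\leq d$. Next, since $R$ is excellent, the completion $\wh R$ is faithfully flat with regular fibers, and $\Ass_{\wh R}(H^{d-1}_I(R) \otimes_R \wh R) = \bigcup_{P \in \Ass_R H^{d-1}_I(R)} \Ass_{\wh R}(\wh R / P\wh R)$; since each $\wh R/P\wh R$ has only finitely many associated primes, finiteness upstairs gives finiteness downstairs. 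Because local cohomology commutes with flat base change, $H^{d-1}_I(R) \otimes_R \wh R = H^{d-1}_{I\wh R}(\wh R)$, so it suffices to prove the theorem for $(R,\m)$ a complete regular local ring of dimension $n \le d$ containing a field of characteristic zero.

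In the complete local case $R \cong K[[X_1,\dots,X_n]]$ (or $V[[X_1,\dots,X_{n-1}]]$, but we are in equicharacteristic zero, so a formal power series ring over a field), and $H^i_I(R)$ is a holonomic $D_R$-module, hence has finite length in the category of $D_R$-modules by Lyubeznik \cite{Lyu-1}. So $H^{d-1}_I(R)$ would automatically have finitely many associated primes — \emph{but} this only handles the maximal ideals $P$ where we localized; it does not by itself bound $\Ass$ globally. The real content must therefore be a genuinely global argument, and here is where I expect the key idea to lie: I would stratify $\Spec R$ and argue that outside the singular locus of $R/I$ (which is a proper closed set), $H^{d-1}_I(R)$ behaves well, and use that $H^{d-1}$ is the \emph{penultimate} nonvanishing cohomology. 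Concretely, for a prime $P$ with $\height P = h$, the module $H^{d-1}_I(R)_P = H^{d-1}_{IR_P}(R_P)$, and by Grothendieck vanishing this is zero unless $d - 1 \leq h$, i.e.\ $h \in \{d-1, d\}$. The height-$d$ primes contribute only the finitely many primes appearing in $\Ass H^d_I(R)$ via the Hartshorne–Lichtenbaum computation quoted above, together with embedded primes, and the height-$(d-1)$ primes $P$ are such that $R_P$ has dimension $d-1$ and $H^{d-1}_{IR_P}(R_P) = H^{\dim R_P}_{IR_P}(R_P)$, whose associated primes form a finite set by \cite[3.11]{BRS}. Combining: $\Ass H^{d-1}_I(R) \subseteq \Ass H^d_I(R) \cup \{P : \height P = d-1,\ P \in \Ass H^{\dim R_P}_{IR_P}(R_P)\} \cup (\text{contribution of } \m)$.

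Thus the hard part is controlling associated primes of height $\le d-2$, equivalently showing that the ``top two'' vanishing degrees interact nicely: one must rule out associated primes $P$ with $\height P \leq d - 2$ lying in the support but not forced by the above. Here $H^{d-1}_I(R)_P = H^{d-1}_{IR_P}(R_P)$ with $\dim R_P \le d-2$, which vanishes by Grothendieck, so in fact no prime of height $\le d-2$ can be an associated prime at all; the only subtlety is bookkeeping the finitely many height-$d$ and height-$(d-1)$ primes, plus possibly $\m$ itself if $\dim R = d$ exactly, which is a single prime. So the argument is really: use Grothendieck vanishing to confine $\Supp H^{d-1}_I(R)$ to primes of height $\geq d-1$, whence every associated prime has height $d-1$ or $d$; height $d$ primes are handled by \cite[3.11]{BRS} / Hartshorne–Lichtenbaum, and for the height-$(d-1)$ primes $P$ one has $H^{d-1}_I(R)_P = H^{\dim R_P}_{IR_P}(R_P)$ and applies \cite[3.11]{BRS} to the finitely many such — the genuinely new input needed is a Noetherianity device guaranteeing that the union of these local associated-prime sets, ranging over all height-$(d-1)$ primes, is still finite, and I expect that to be the crux, likely proved by a $D$-module length bound on $H^{d-1}_I(R)$ that is uniform enough to globalize, or by reducing to a generic-freeness/constructibility statement on $\Spec R$.
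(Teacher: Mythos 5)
Your proposal contains two genuine gaps, one of which you partially acknowledge yourself and one which is a real mathematical error, and the combination means the argument does not close.

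First, the reduction to the complete local case does not work as stated. Finiteness of $\Ass_{R_\m} H^{d-1}_{IR_\m}(R_\m)$ for each individual maximal ideal $\m$ says nothing a priori about finiteness of the global set $\Ass_R H^{d-1}_I(R)$: you could have infinitely many maximal ideals, each contributing a new (height $< \dim R_\m$) associated prime, and the union could still be infinite. You recognize this (``this only handles the maximal ideals $P$ where we localized; it does not by itself bound $\Ass$ globally''), but the Noetherianity/constructibility device you hope exists is precisely the missing content.

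Second, and more seriously, your stratification by height misidentifies where the difficulty lies. You are right that Grothendieck vanishing confines $\Supp H^{d-1}_I(R)$ to primes of height $\geq d-1$, and you are right that the height-$(d-1)$ primes are under control (for such $P$, localizing gives $H^{\dim R_P}_{IR_P}(R_P)$, and Hartshorne--Lichtenbaum forces $P$ to be a minimal prime of $I$ of height $d-1$, so there are only finitely many). But your claim that ``the height-$d$ primes contribute only the finitely many primes appearing in $\Ass H^d_I(R)$'' is false: $\Ass H^d_I(R)$ and $\Ass^d(H^{d-1}_I(R))$ are associated primes of two different modules, and the former gives no bound on the latter. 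Ruling out infinitely many height-$d$ associated primes of $H^{d-1}_I(R)$ is in fact the \emph{entire} content of the theorem; the $D$-module length bound in a single completed local ring does not globalize by itself, exactly because of the first gap.

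The paper's proof is organized completely differently. It passes to a domain $R$ containing an uncountable field (via the extension $R \rt R[[X]]_X$), argues by contradiction by choosing a maximal pair $(I,J)$ in the poset $\C^{d-1}_R$ of pairs with $\Ass(\image\, \theta^{d-1}_{I,J})$ infinite, and then performs a sequence of localizations built using countable prime avoidance to reach a semilocal regular ring $B$ in which $IB$ is a single prime of height $d-1$ equal to $\rad(B)$ with $\mSpec(B)$ countably infinite. Excellence is then invoked to find a closed point $\m$ where $B/IB$ is regular; passing to $\widehat{B_\m} \cong K[[X_1,\dots,X_d]]$, the module $H^{d-1}_{(X_1,\dots,X_{d-1})}(\widehat{B_\m})$ is a \emph{simple} $D$-module (Lemma \ref{m-loc}), so the image of the comparison map $\theta^{d-1}$ cannot have $\m$ as an associated prime, contradicting the construction. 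The maximality of $(I,J)$, the countable avoidance machinery, and the simplicity (not merely finite length) of the relevant $D$-module are all essential and none of them appear in your sketch.
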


The main idea of this  paper is that it is fruitful to look at the following relative situation:
Let $I \supseteq J$ be ideals in a Noetherian ring $R$. We have a natural map
$\theta^i_{I,J} \colon H^i_I(R) \rt H^i_J(R)$ for each $i \geq 0$.  Fix $i_0 \geq 0$.
Set
\[
\C^{i_0}_R =  \left\{ (I,J) \left| I \supset J  \ \text{and} \  \sharp \Ass_R (\image \theta^{i_0}_{I,J}) = \infty  \right.   \right \}.
\]
Here $\sharp S $ denotes the number of elements in a set $S$.  If $ \Ass_R H^{i_0}_I(R)$ is infinite then $(I,I)\in \C^{i_0}_R$. Conversely if $(I,J) \in \C^{i_0}_R$ then $ \Ass_R H^{i_0}_J(R)$ is infinite.
We partially order $\C^{i_0}_R $ as follows: Set $(I,J) \preceq (I^\prime, J^\prime)$ if $I^\prime \supseteq I$ and $J^\prime \supseteq J$. It is easy to see that every ascending chain in $\C^{i_0}_R $ stabilizes.  If  $\C^{i_0}_R$ is non-empty then its maximal elements 
have some peculiar properties, see \ref{pec}.

The following result is a crucial ingredient in the proof of Theorem \ref{main-T}.
\begin{theorem}\label{main} Let $R$ be an excellent regular domain  of dimension $d$ containing an uncountable field of characteristic zero. Assume for some $i_0$ the set $\C^{i_0}_R $ is non-empty. Let $(I,J)$ be a maximal element in $\C^{i_0}_R $. Then there exists a multiplicatively closed set $S$ of $R$ such that in the ring $A = S^{-1}R$ we have
\begin{enumerate}[\rm(a)]
\item
$(S^{-1}I, S^{-1}J)$ is a maximal element in $\C^{i_0}_ A$.
\item
$\height S^{-1}I = i_0$.
\item
$S^{-1}I = P_1\cap P_2 \cap \cdots \cap P_r$ where $P_i$ is a prime in $A$ of height $i_0$.
\item
$\Ass \image \theta^{i_0}_{S^{-1}I,S^{-1}J} \supseteq \mSpec(A)$.
\item
$\height \m = \height \m^\prime$ for $\m , \m^\prime \in \mSpec(A)$.
\item
$\mSpec(A)$ is a countably infinite set.
\end{enumerate}
Furthermore if $\Ass_R (H^r_L(R))$ is a finite set for all $r < i_0$ and for all ideals $L$ of $R$ then    $\height S^{-1}J = i_0 - 1$
\end{theorem}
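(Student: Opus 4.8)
The plan is to run the whole argument on the module $M=\image\theta^{i_0}_{I,J}$, which by assumption has infinitely many associated primes. Two observations pin down where those primes sit. Since $\Supp H^{i_0}_I(R)\sub V(I)$ we have $\Supp M\sub V(I)$, so every $P\in\Ass_R M$ contains $I$; and if $\height P<i_0$ then $H^{i_0}_{IR_P}(R_P)=0$ by Grothendieck vanishing, forcing $M_P=0$, so in fact $\height P\ge i_0$ for every $P\in\Ass_R M$. By Ramsey's theorem an infinite poset contains an infinite chain or an infinite antichain; as $\dim R=d<\infty$, chains in $\Ass_R M$ have at most $d+1$ elements, so $\Ass_R M$ contains an infinite antichain, and after passing to a countable subfamily of fixed height we obtain $\{P_n\}_{n\ge 1}\sub\Ass_R M$, pairwise incomparable, with $\height P_n=h$ for all $n$. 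Put $S=R\setminus\bigcup_{n\ge 1}P_n$ and $A=S^{-1}R$, again an excellent regular domain.

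Here the uncountability of $K$ enters: by countable prime avoidance, any prime $\q$ of $R$ with $\q\sub\bigcup_n P_n$ lies in some $P_n$. Hence every prime of $A$ has the form $\q A$ with $\q\sub\bigcup_n P_n$, and since the $P_n$ are pairwise incomparable the maximal ideals of $A$ are exactly $\{P_nA:n\ge 1\}$; this yields (e) and (f). Localization commutes with local cohomology and with images, so $\image\theta^{i_0}_{S^{-1}I,S^{-1}J}=S^{-1}M$, and $\Ass_A(S^{-1}M)=\{\q A:\q\in\Ass_R M,\ \q\sub\bigcup_n P_n\}\supseteq\{P_nA:n\ge 1\}=\mSpec(A)$, which is (d); in particular $\Ass_A(S^{-1}M)$ is infinite, so $(S^{-1}I,S^{-1}J)\in\C^{i_0}_A$. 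For (a): any pair strictly dominating $(S^{-1}I,S^{-1}J)$ in $\C^{i_0}_A$ contracts to a pair of ideals of $R$ that lies in $\C^{i_0}_R$ (localization only shrinks $\Ass$) and strictly dominates $(I,J)$ (extension recovers the contracted pair), contradicting maximality.

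For (b) and (c): since $H^i_L(R)$ depends only on $\sqrt L$, the pair $(\sqrt I,\sqrt J)$ lies in $\C^{i_0}_R$ and dominates $(I,J)$, so $I$ and $J$ are radical. Combining maximality of $(I,J)$ with the fact that no prime of $\Ass_R M$ has height $<i_0$, one shows $I$ is unmixed of height $i_0$; writing $I=Q_1\cap\cdots\cap Q_m$ with $\height Q_i=i_0$, each $P_n\supseteq I$ contains some $Q_i$, so that $Q_i\sub\bigcup_m P_m$ and $Q_iA\in\Spec(A)$, whence $S^{-1}I=\bigcap_{i\in T}Q_iA$ with $T=\{i:Q_i\sub\bigcup_n P_n\}\neq\varnothing$, a finite intersection of height-$i_0$ primes of $A$. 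I expect the unmixedness step — which genuinely requires the structural properties of maximal elements — to be one of the two main difficulties.

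Finally the ``Furthermore'' part: assume $\Ass_R H^r_L(R)$ is finite for all $r<i_0$ and all $L$, a property inherited by $A$; we must show $\height S^{-1}J=i_0-1$. Certainly $\height S^{-1}J\le i_0$, as $S^{-1}J\sub S^{-1}I$. Consider the composition spectral sequence $E_2^{p,q}=H^p_{S^{-1}I}\bigl(H^q_{S^{-1}J}(A)\bigr)\Rightarrow H^{p+q}_{S^{-1}I}(A)$, available because $S^{-1}J\sub S^{-1}I$, whose relevant edge map is $\theta^{i_0}_{S^{-1}I,S^{-1}J}$. If $\height S^{-1}J=i_0$, the rows $q<i_0$ of the spectral sequence vanish, $\theta^{i_0}_{S^{-1}I,S^{-1}J}$ becomes injective, and $S^{-1}M\cong H^{i_0}_{S^{-1}I}(A)$; but since $S^{-1}I$ is radical of pure height $i_0$, a Mayer--Vietoris induction over its prime components — using that local cohomology of a regular local ring is concentrated in top degree, so that $H^{i_0}$ of a height-$i_0$ prime has that prime as its only associated prime, and $H^{i_0+1}$ of an ideal of height $\ge i_0+1$ has associated primes only among its height-$(i_0+1)$ minimal primes — shows $\Ass_A H^{i_0}_{S^{-1}I}(A)$ is finite, contradicting $\Ass_A(S^{-1}M)\supseteq\mSpec(A)$. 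If $\height S^{-1}J\le i_0-2$, then each $H^q_{S^{-1}J}(A)$ with $q<i_0$ has only finitely many associated primes by the inductive hypothesis; feeding this, together with a further Mayer--Vietoris analysis of $S^{-1}I$, into the spectral sequence forces $\Ass_A(S^{-1}M)$ to be finite, again impossible. Hence $\height S^{-1}J=i_0-1$. The associated-prime bookkeeping in this last case is the second — and I expect the harder — main obstacle: it is precisely here that ``$\Ass_R H^r_L(R)$ finite for $r<i_0$'' must be converted into a genuine finiteness statement about $\Ass_A(S^{-1}M)$.
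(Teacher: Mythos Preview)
Your construction of $A$ and the verification of (a), (d), (e), (f) match the paper's approach and are fine. The trouble is in (b)/(c) and in the ``Furthermore'' clause, where you have identified the difficulties correctly but your proposed attacks do not work as stated.

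\textbf{Part (b).} You write ``combining maximality of $(I,J)$ with the fact that no prime of $\Ass_R M$ has height $<i_0$, one shows $I$ is unmixed of height $i_0$,'' but this is the entire content of the step and you have not indicated a mechanism. Knowing that $\Ass_R M$ sits in height $\ge i_0$ says nothing directly about the minimal primes of $I$; the map $\theta^{i_0}_{I,J}$ could have large image even when $\height I$ is small. The paper's argument is substantially different from anything you suggest: assuming $\height IA<i_0$, one first uses maximality (via an element $x$ avoiding a low minimal prime of $IA$ but lying in every associated prime of $H^{i_0}_{IA}(A)$, so that $\theta^{i_0}_{IA+(x),IA}$ is surjective) to force $\Ass_A H^{i_0}_{IA}(A)$ infinite; then uses maximality again to show $IA$ equals the intersection of any infinite subfamily of these primes; then performs a \emph{second} localization at such a family so that $IB$ becomes the Jacobson radical; and finally invokes \emph{excellence} (openness of the regular locus of $B/IB$) to find a maximal ideal $\m$ where $IB_\m$ is generated by part of a regular system of parameters, whence $\m\notin\Ass H^{i_0}_{IB}(B)$, a contradiction. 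The excellence hypothesis is used precisely here, and your outline never touches it.

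\textbf{The ``Furthermore'' clause.} Your spectral-sequence route runs into a genuine obstacle. In the case $\height S^{-1}J=i_0$ you reduce to showing $\Ass_A H^{i_0}_{S^{-1}I}(A)$ is finite and then assert, as input to Mayer--Vietoris, that ``$H^{i_0}$ of a height-$i_0$ prime has that prime as its only associated prime.'' This is not a known elementary fact for a general excellent regular ring: it is essentially a special case of Lyubeznik's conjecture, and the paper only proves the analogous statement in the very special situation $\mathcal O=K[[X_1,\ldots,X_n]]$, $P=(X_1,\ldots,X_g)$, via $D$-module simplicity. Your case $\height S^{-1}J\le i_0-2$ is, as you admit, unaddressed. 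The paper's argument here is again elementary and direct: using the finiteness of $\Ass_A H^{i_0-1}_{JA}(A)$ one chooses $t\in IA$ lying in every such associated prime but outside a minimal prime of $JA$ of small height; then $(H^{i_0-1}_{JA}(A))_t=0$, and a short diagram chase with the long exact sequence for $JA\subset JA+(t)$ shows $\mSpec(A)\subseteq\Ass_A(\image\theta^{i_0}_{IA,JA+(t)})$, contradicting maximality. No spectral sequence, no Mayer--Vietoris, and no unproved $\Ass$-computation is needed.
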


The assumption that $R$ is a domain and contains an uncountable field are mild hypotheses, see section \ref{flat-s}.  The assumption on excellence of $R$ is  satisfied by most examples. As an easy  consequence of Theorem \ref{main} we get the following significant simplification for Lyubeznik's conjecture.
\begin{corollary}\label{cor-main}
The following are equivalent:
\begin{enumerate}[\rm (i)]
\item
Lyubeznik's conjecture has a positive answer for all excellent regular rings of  dimension $ \leq d$ and containing a field of characteristic zero.
\item
For all excellent regular domains $R$ of  dimension $\leq d$, containing an uncountable  field of characteristic zero,
 $\Ass_R (H^{g+1}_J(R))$ is a finite set for all ideals $J$ of height $g$.
\end{enumerate}
\end{corollary}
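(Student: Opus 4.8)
The plan is to prove the substantive implication (ii)$\Rightarrow$(i); the reverse is immediate, since an excellent regular domain of dimension $\le d$ containing an uncountable field of characteristic zero is in particular an excellent regular ring of dimension $\le d$ containing a field of characteristic zero, so (i) already yields finiteness of $\Ass_R(H^{g+1}_J(R))$. For (ii)$\Rightarrow$(i) I argue by contraposition: assume Lyubeznik's conjecture fails for some excellent regular ring $R$ with $\dim R\le d$ containing a field of characteristic zero, say $\Ass_R(H^i_I(R))$ is infinite for some ideal $I$ and some $i$.

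The first step is to transport this failure into the hypotheses of Theorem \ref{main}. A Noetherian regular ring is a finite product of regular domains, and both $H^\bullet_I(-)$ and the formation of associated primes distribute over such a product, so I may replace $R$ by one of its domain factors. Then, via the faithfully flat base change $R\rightsquigarrow R\otimes_K K'$ along an uncountable field extension $K'/K$ — which in characteristic zero preserves regularity, excellence and the dimension bound, while $\Ass$ of a flat ring extension is governed by the fibres — I may further assume $R$ is an excellent regular domain of dimension $\le d$ containing an uncountable field $K$ of characteristic zero and that $\Ass_R(H^i_I(R))$ is still infinite. (These are exactly the routine reductions assembled in Section \ref{flat-s}.)

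Next, let $i_0$ be the least integer for which there exist an excellent regular domain of dimension $\le d$ containing an uncountable field of characteristic zero and an ideal whose $i_0$-th local cohomology module has infinitely many associated primes; such $i_0$ exists by the previous paragraph, and $i_0\ge1$ because over a domain $H^0_L(-)$ is either $0$ or the whole ring, hence has at most one associated prime. Choose $R$ and $I$ realizing $i_0$. Minimality of $i_0$ guarantees that $\Ass_R(H^r_L(R))$ is finite for all $r<i_0$ and all ideals $L$. Since $(I,I)\in\C^{i_0}_R$, the poset $\C^{i_0}_R$ is non-empty; as $R$ is Noetherian, ascending chains of pairs of ideals stabilize, so $\C^{i_0}_R$ has a maximal element $(I_1,J_1)$. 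Applying Theorem \ref{main} to $(I_1,J_1)$ produces a multiplicatively closed set $S\subseteq R$ with $A=S^{-1}R$ satisfying (a)--(f), and the ``furthermore'' clause — whose hypothesis is precisely the finiteness just recorded — gives $\height S^{-1}J_1=i_0-1$.

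Put $g=i_0-1$ and $J=S^{-1}J_1$, so $\height J=g$. By Theorem \ref{main}(d) and (f), $\Ass_A(\image\theta^{i_0}_{S^{-1}I_1,\,S^{-1}J_1})$ contains the infinite set $\mSpec(A)$; since this image is a submodule of $H^{i_0}_{S^{-1}J_1}(A)=H^{g+1}_J(A)$, the set $\Ass_A(H^{g+1}_J(A))$ is infinite. But $A$, being a localization of an excellent regular domain, is itself an excellent regular domain, of dimension $\le\dim R\le d$, and $K\subseteq R\subseteq A$, so $A$ contains an uncountable field of characteristic zero. This contradicts (ii), and (i) follows. I expect the one genuinely delicate point to be the reductions in the second step — confirming that passing to a domain factor and then enlarging the coefficient field keeps one inside ``excellent regular of dimension $\le d$'' without destroying the infinitude of $\Ass$ — which is exactly what Section \ref{flat-s} supplies; once Theorem \ref{main} is in force, the bookkeeping (in particular the value $g=i_0-1$) is forced by its statement.
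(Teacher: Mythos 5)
Your proof is correct and follows the same route as the paper's (much terser) argument: reduce via \ref{red-domain} and Section \ref{flat-s} to an excellent regular domain containing an uncountable field, pick the least $i_0$ witnessing infinitude of $\Ass$, take a maximal pair in $\C^{i_0}_R$, and invoke Theorem \ref{main} together with its ``furthermore'' clause to land exactly in the setting forbidden by (ii). One cosmetic slip worth flagging: the uncountable-field reduction in Section \ref{flat-s} is achieved by the flat extension $R\to R[[X]]_X$, not by tensoring $R\otimes_K K'$ with a large field extension $K'$; the latter need not even be Noetherian for non-finitely-generated $K'/K$, which is precisely why the paper works with $R[[X]]_X$. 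Since you explicitly defer to Section \ref{flat-s} for the details, the argument stands, but the parenthetical description of what that section does should be corrected.
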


As an application of our Theorem \ref{main-T} we get
\begin{corollary}\label{d4}
Let $R$ be an excellent regular ring containing a field $K$ and of dimension $d  \leq 4$. Then for any ideal $I$ we have
$\Ass H^i_I(R)$ is a finite set for all $i \geq 0$.
\end{corollary}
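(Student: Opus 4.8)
If $\charr K=p>0$, then Lyubeznik's conjecture holds for $R$ in its entirety by the theorem of Huneke and Sharp \cite{HuSh}; so assume $\charr K=0$. By Corollary \ref{cor-main} it suffices to prove that $\Ass_R H^{g+1}_J(R)$ is finite for every excellent regular domain $R$ of dimension $\le 4$ containing an uncountable field of characteristic zero and every ideal $J$ of height $g$. As $0\le g\le\dim R\le 4$ and $H^{g+1}_J(R)=0$ when $g=\dim R$, only $g\in\{0,1,2,3\}$ is at issue.

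For $g=0$ the set $\Ass_R H^1_J(R)$ is finite over any Noetherian ring: passing to $R/\Gamma_J(R)$ we may assume $J$ contains a nonzerodivisor $x$, and then $H^1_J(R)$ is $x$-power-torsion with $(0:_{H^1_J(R)}x)\cong\Gamma_J(R/xR)$ a submodule of a finitely generated module, so $\Ass_R H^1_J(R)=\Ass_R(0:_{H^1_J(R)}x)$ is finite. For $g\in\{2,3\}$ one checks, according to $\dim R$, that $H^{g+1}_J(R)$ is either $0$ (if $g+1>\dim R$), or the top local cohomology $H^{\dim R}_J(R)$ (with $\Ass=\{P\in\Min(R/J):\height P=\dim R\}$; in any case \cite[3.11]{BRS} gives finiteness), or $H^{\dim R-1}_J(R)$, finite by Theorem \ref{main-T}. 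The same trichotomy settles $g=1$ whenever $\dim R\le 3$. Hence the whole statement reduces to: \emph{$\Ass_R H^2_J(R)$ is finite whenever $R$ is an excellent regular domain of dimension $4$ containing an uncountable field of characteristic zero and $\height J=1$.}

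Suppose not. Then $(J,J)\in\C^2_R$, so $\C^2_R\neq\emptyset$; since $\Ass_R H^r_L(R)$ is finite for $r\in\{0,1\}$ and all ideals $L$, Theorem \ref{main} (including its last assertion) applies to a maximal element of $\C^2_R$ and yields a multiplicatively closed $S$ with $A=S^{-1}R$ satisfying (a)--(f) and $\height S^{-1}J=1$. By (e) and (f), $\mSpec(A)$ is countably infinite with all members of one common height $h=\dim A\le 4$; by (d), $\mSpec(A)\subseteq\Ass_A\bigl(\image\theta^2_{S^{-1}I,S^{-1}J}\bigr)\subseteq\Ass_A H^2_{S^{-1}J}(A)$. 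If $h\le 2$ then $H^2_{S^{-1}J}(A)$ is $0$ or the top local cohomology $H^{\dim A}_{S^{-1}J}(A)$, which has finitely many associated primes, contradicting that $\mSpec(A)$ is infinite. If $h=3$ then $H^2_{S^{-1}J}(A)=H^{\dim A-1}_{S^{-1}J}(A)$, whose associated primes form a finite set by Theorem \ref{main-T} applied to $A$ --- again a contradiction.

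The surviving case $h=4$ I expect to be the main obstacle, for then $H^2_{S^{-1}J}(A)$ lies in the middle degree $\dim A-2$, not covered by Theorem \ref{main-T}. Here I would use the finer structure of Theorem \ref{main}: by (b),(c), $S^{-1}I=P_1\cap\cdots\cap P_r$ with each $P_i$ prime of height $2$, so $\image\theta^2_{S^{-1}I,S^{-1}J}$ is annihilated by a power of $S^{-1}I$ and has support in the pure-height-$2$ set $V(P_1)\cup\cdots\cup V(P_r)$ --- together with the properties of maximal critical pairs recorded in \ref{pec}. For $\m\in\mSpec(A)$ one has $\m\supseteq P_i$ for some $i$, and since $\height\m=4>2=\height P_i$, prime avoidance produces $x\in\m$ lying outside every $P_j$ (and outside the height-$\le 3$ primes entering a Mayer--Vietoris analysis of $\image\theta^2$); the pure-height-$2$ structure should force $x$ to be a nonzerodivisor on $\bigl(\image\theta^2_{S^{-1}I,S^{-1}J}\bigr)_\m$, i.e. $\depth_{A_\m}\bigl(\image\theta^2_{S^{-1}I,S^{-1}J}\bigr)_\m\ge 1$, contradicting $\m\in\Ass$. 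Carrying out this depth estimate --- which is exactly where the sharper consequences of maximality in \ref{pec} and the rigidity of the localized configuration are needed --- is the crux of the argument.
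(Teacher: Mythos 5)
There is a genuine gap, and you have accurately located it yourself: the case $g=1$, $\dim R=4$, i.e.\ showing $\Ass_R H^2_J(R)$ is finite when $\height J = 1$. Your reduction through Corollary \ref{cor-main} is a perfectly reasonable (and slightly different) organizing device --- the paper instead keeps $\dim R = 4$ fixed, reduces to a domain via \ref{red-domain}, and runs a case analysis on $\height I$, invoking Theorem \ref{main-T} only for degree $i = d-1 = 3$ and Proposition \ref{HL} for $i = d = 4$. Either way, everything funnels into the one nontrivial remaining question: finiteness of $\Ass H^2_I(R)$ for $I$ of height one.

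The missing idea is the paper's Lemma \ref{ht1}, which settles this case by a short, elementary argument that bypasses Theorem \ref{main} entirely. Write a primary decomposition of the height-one ideal $I$ and set $J'$ equal to the intersection of the components of height one and $K$ equal to the intersection of those of height at least two, so $I = J' \cap K$. Since $R$ is regular (hence locally a UFD), a pure height-one ideal is locally principal, so $H^j_{J'}(R) = 0$ for $j \geq 2$. The Mayer--Vietoris sequence for $J', K$ then yields an exact sequence $0 \to H^2_K(R) \to H^2_I(R) \to H^3_{J'+K}(R)$ with $\height K \geq 2$ and $\height (J'+K) \geq 3$; both flanking modules have finitely many associated primes by Remark \ref{htg}, hence so does $H^2_I(R)$. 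This works over any regular ring, with no uncountability or maximality-of-critical-pairs machinery.

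By contrast, the depth argument you sketch for the case $h = 4$ does not have a visible path to completion: you know $\m \in \Ass_A(\image\theta^2)$ and are hoping to exhibit an element $x \in \m$ that is a nonzerodivisor on the localized image, which would contradict the very membership $\m \in \Ass$ you started from. Nothing in the pure-height-two shape of the support, nor in Lemma \ref{pec}, supplies such an $x$; indeed the whole point of the construction in Theorem \ref{main} is that every maximal ideal of $A$ \emph{is} associated, so depth zero at every closed point is baked in. Replacing that heuristic by Lemma \ref{ht1} (inserted before you ever invoke Theorem \ref{main}) closes the gap and gives essentially the paper's proof.
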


If $M$ is an $R$-module then set 
\[
\Ass_R^i(M) = \{ P \mid P \in \Ass M \ \text{and} \ \height P = i \}.
\]
Using the  Hartshorne-Lichtenbaum theorem we get that if $R$ is regular and $I$ is an ideal in $R$ then
\[
\bigcup_{i \geq 0} \Ass_R^i(H^i_I(R))  = \Min R/I; \quad \text{see \ref{HL-C}.}
\]
As an application of our result we get 
\begin{corollary}\label{main-app}
Let $R$ be an excellent regular ring  of dimension $d$ and  containing a field of characteristic zero.  Let $I$ be an ideal in $R$. Then
\[
\bigcup_{i \geq 0} \Ass_R^{i+1}(H^i_I(R))  \quad \text{is a finite set}.
\]
\end{corollary}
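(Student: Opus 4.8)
The plan is to localize each candidate associated prime down to $R_P$, where $H^i_{IR_P}(R_P)$ is the \emph{penultimate} (i.e.\ the $(\dim R_P-1)$-st) local cohomology module of the regular local ring $R_P$, and then feed the resulting infinite family into the $\C^{i_0}_R$-formalism together with Theorems~\ref{main-T} and \ref{main}.

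First I would cut the union down to finitely many degrees. For $i\geq d$ the set $\Ass^{i+1}_R(H^i_I(R))$ is empty, and for $i=d-1$ we have $\Ass^{d}_R(H^{d-1}_I(R))\sub\Ass_R H^{d-1}_I(R)$, which is finite by Theorem~\ref{main-T}; so it suffices to show $\Ass^{i+1}_R(H^i_I(R))$ is finite for each fixed $i$ with $0\leq i\leq d-2$. For such an $i$ and $P\in\Ass^{i+1}_R(H^i_I(R))$ we have $P\supseteq I$ (the module is $I$-torsion), and localizing gives $PR_P\in\Ass_{R_P}H^i_{IR_P}(R_P)$ with $\dim R_P=\height P=i+1$, so $H^i_{IR_P}(R_P)=H^{\dim R_P-1}_{IR_P}(R_P)\neq 0$. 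If in addition $H^{\dim R_P}_{IR_P}(R_P)\neq 0$, the Hartshorne--Lichtenbaum theorem forces $\sqrt{IR_P}=PR_P$, whence $P\in\Min R/I$ — a finite set. Discarding those, every remaining $P$ has $\operatorname{cd}(IR_P,R_P)=\dim R_P-1$, equivalently (second vanishing theorem) $\Spec^\circ(R_P/IR_P)$ disconnected — the local condition underlying the connectedness statement of the abstract. So it remains to bound the set $W$ of primes $P$ with $\operatorname{cd}(IR_P,R_P)=\dim R_P-1$ and $PR_P\in\Ass_{R_P}H^{\dim R_P-1}_{IR_P}(R_P)$.

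By the flat base change reductions of Section~\ref{flat-s} we may assume $R$ is an excellent regular \emph{domain} containing an \emph{uncountable} field of characteristic zero (so $i_0\geq 1$ below). Suppose $W$ is infinite. Since its height-$d$ part lies in the finite set $\Ass_R H^{d-1}_I(R)$, there is a single $i_0$ with $1\leq i_0\leq d-2$ for which $W$ contains infinitely many primes of height $i_0+1$; each lies in $\Ass_R H^{i_0}_I(R)$, so $(I,I)\in\C^{i_0}_R$ and $\C^{i_0}_R\neq\emptyset$. Apply Theorem~\ref{main}, with $(I',J')$ a maximal element of $\C^{i_0}_R$ above $(I,I)$ playing the role of the pair there: we get a multiplicatively closed $S$ so that $A=S^{-1}R$ satisfies (a)--(f); in particular $\height S^{-1}I'=i_0$, $S^{-1}I'=P_1\cap\cdots\cap P_r$ with every $\height P_j=i_0$, all maximal ideals of $A$ have one common height $h$, $\mSpec(A)$ is countably infinite, and $\mSpec(A)\sub\Ass_A\bigl(\image\theta^{i_0}_{S^{-1}I',S^{-1}J'}\bigr)\sub\Ass_A H^{i_0}_{S^{-1}J'}(A)$. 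Since $H^{i_0}_{S^{-1}J'}(A)_{\m}\neq 0$ for each maximal $\m$ we have $h\geq i_0$, and I would split by the value of $h$. If $h=i_0$: then $H^{\dim A_{\m}}_{(S^{-1}J')A_{\m}}(A_{\m})\neq 0$ for every maximal $\m$, so Hartshorne--Lichtenbaum forces $\sqrt{(S^{-1}J')A_{\m}}=\m A_{\m}$, i.e.\ $\m\in\Min(A/S^{-1}J')$, a finite set, contradicting $\mSpec(A)$ infinite. If $h=i_0+1$: then $A$ is an excellent regular ring of dimension $i_0+1$ containing a field of characteristic zero and $i_0=\dim A-1$, so Theorem~\ref{main-T} gives $\Ass_A H^{i_0}_{S^{-1}J'}(A)$ finite, again contradicting $\mSpec(A)$ infinite.

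The case $h\geq i_0+2$ is where the real difficulty lies, and is the step I expect to be the main obstacle: here each maximal ideal of $A$ is an associated prime of $H^{i_0}_{S^{-1}J'}(A)$ in a degree at least two below the top, so neither Hartshorne--Lichtenbaum nor Theorem~\ref{main-T} applies directly to $A$ or to the $A_{\m}$. My approach would be to exploit the peculiar properties of maximal elements of $\C^{i_0}_R$ recorded in \ref{pec}, together with the decomposition $S^{-1}I'=P_1\cap\cdots\cap P_r$ into finitely many primes of the \emph{small} height $i_0$ and a generic connectedness argument (of Bertini type) over the uncountable ground field: a countable family of equal-height maximal ideals all forced to be associated primes of $H^{i_0}_{S^{-1}J'}(A)$ should be incompatible with $S^{-1}I'$ being this tightly controlled. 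I also expect the ``Furthermore'' clause of Theorem~\ref{main} ($\height S^{-1}J'=i_0-1$, once $\Ass_R H^r_L(R)$ is known finite for all $r<i_0$ and all $L$) to be needed, which would make it natural to prove this corollary by induction on $d$ alongside Theorem~\ref{main-T}; reducing a general ideal $I$ to its equi-height components by Mayer--Vietoris, so as to apply the connectedness dictionary cleanly, is a final bookkeeping point I would handle at the end.
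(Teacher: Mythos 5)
Your first reductions are fine, but you stop one step short of the right localization, and the case you yourself flag as the main obstacle, $h\geq i_0+2$, is in fact a genuine gap along the route you propose; the paper avoids it entirely. The trouble with feeding this into the $\C^{i_0}_R$-machinery is that Theorem~\ref{main} builds its ring $A$ from \emph{some} infinite constant-height subset of $\Ass(\image\theta^{i_0}_{I',J'})$, with no control over that height $h$, whereas your hypothesis concerns the height-$(i_0+1)$ stratum of $\Ass H^{i_0}_I(R)$ specifically. So you inherit exactly the uncontrolled $h$ you worry about, and the paper contains no Bertini-type or connectedness shortcut for $h\geq i_0+2$; the ``Furthermore'' clause of Theorem~\ref{main} also does not help, since it only pins down $\height S^{-1}J'$, not $h$.

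What the paper does instead is much more direct and bypasses Theorem~\ref{main}, Hartshorne--Lichtenbaum, and Ogus altogether. Suppose some $\Ass^{i+1}_R(H^i_I(R))$ is infinite (necessarily countable by Proposition~\ref{count-ass}); after the flat extension of Section~\ref{flat-s} (Corollary~\ref{cc}) assume $R$ contains an uncountable field, and write $\Ass^{i+1}_R(H^i_I(R))=\{\p_n\}_{n\geq 1}$. These primes all have height $i+1$, hence are pairwise incomparable, so Construction~\ref{const-K} applies with $S=R\setminus\bigcup_n\p_n$; set $A=S^{-1}R$. By Proposition~\ref{prop-const}, $\mSpec(A)=\{\p_nA\}_{n\geq 1}$ and every maximal ideal of $A$ has height $i+1$, so $\dim A=i+1$, and $A$ is still excellent and regular. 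Each $\p_nA$ lies in $\Ass_AH^i_{IA}(A)$, so $\Ass_AH^{\dim A-1}_{IA}(A)$ is infinite, contradicting Theorem~\ref{main-T} applied to $A$. The observation you missed is that localizing at the \emph{specific} height-$(i+1)$ family of associated primes collapses the ambient dimension to $i+1$, landing you directly in the penultimate-cohomology situation where Theorem~\ref{main-T} applies with no further case analysis.
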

We need a  description of primes which appear in the Corollary \ref{main-app}.
To do it we first make the following definition: Recall if $(A,\m)$ is a local ring 
then $\Spec^\circ(A) = \Spec(A) \setminus \{ \m \}$ considered as a subspace of $\Spec(A)$.
\begin{definition}
Let $(A,\m)$ be a local ring and  let $I$ be an ideal in $A$.  We say $\Spec^\circ(A/I)$ is \textit{absolutely connected} if for every flat local map $(A,\m) \rt (B,\n)$ with $\m B = \n$, $B$ complete and $B/\n$ algebraically closed,  $\Spec^\circ(B/IB)$ is connected 
\end{definition}

\begin{remark}\label{c-abs-int}
It is easy to see that if $\Spec^\circ(A/I)$ is absolutely connected then it is connected.
\end{remark}

As a consequence of a result of Ogus \cite[2.11]{O} (also see \cite[1.1]{HL}), we get that 
\begin{proposition}\label{ogus}
Let $R$ be a regular ring  of dimension $d$ and containing a field of characteristic zero and let $I$ be an ideal in $R$ of height $g$. Assume $\height Q = g$ for all minimal primes $Q$ of $I$.
Then
\[
\bigcup_{i \geq 0} \Ass_R^{i+1}(H^i_I(R)) =  \left\{  P  \left| \begin{cases}
P \supseteq I, \\ \height P \geq g + 2 \ \text{and} \\ \ \Spec^\circ(R_P/I_P) \ \text{is \emph{NOT} absolutely connected} \end{cases}
  \right.   \right \}. 
\]
\end{proposition}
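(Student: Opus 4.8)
The plan is to localise at each candidate prime, reducing the statement to a question about when a maximal ideal is associated to the penultimate local cohomology module; that question is answered by Ogus's second vanishing theorem together with a Mayer--Vietoris induction. Since $H^i_I(R)_P = H^i_{IR_P}(R_P)=0$ whenever $P\not\supseteq I$, only primes containing $I$ can occur in the union. Fix such a $P$, put $n=\height P$, $(A,\m)=(R_P,PR_P)$ and $\mathfrak a=IR_P$; then $A$ is regular local of dimension $n$, and by the hypothesis on the minimal primes of $I$ every minimal prime of $\mathfrak a$ has height $g$, so $\dim A/\mathfrak a=n-g$. As $\Ass_R^{i+1}(H^i_I(R))$ consists of height-$(i+1)$ primes, $P$ lies in $\bigcup_{i\ge 0}\Ass_R^{i+1}(H^i_I(R))$ exactly when $P\in\Ass_R H^{n-1}_I(R)$, i.e.\ (localising) $\m\in\Ass_A H^{n-1}_{\mathfrak a}(A)$, equivalently (as $\m$ is maximal) $H^0_\m\big(H^{n-1}_{\mathfrak a}(A)\big)\neq 0$.

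I would first dispose of the case $n-g\le 1$. If $n=g$ then $H^{n-1}_{\mathfrak a}(A)=H^{g-1}_{\mathfrak a}(A)=0$, since $A$ is Cohen--Macaulay and $\height\mathfrak a=g$. If $n=g+1$, the Grothendieck spectral sequence $H^p_\m\big(H^q_{\mathfrak a}(A)\big)\Rightarrow H^{p+q}_\m(A)$ (available because $\sqrt{\mathfrak a}\subseteq\m$), combined with $H^q_{\mathfrak a}(A)=0$ for $q<g$ and $H^j_\m(A)=0$ for $j<n$, forces $H^0_\m\big(H^{g}_{\mathfrak a}(A)\big)=0$. Hence no prime of height $\le g+1$ can occur, which accounts for the restriction $\height P\ge g+2$.

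Now suppose $t:=n-g=\dim A/\mathfrak a\ge 2$; the goal becomes $\m\in\Ass_A H^{n-1}_{\mathfrak a}(A)\iff\Spec^\circ(A/\mathfrak a)$ is not absolutely connected. Choose a flat local $(A,\m)\to(B,\n)$ with $\m B=\n$, $B$ complete regular local and $B/\n$ algebraically closed, as in the definition of absolute connectedness. Flat base change gives $H^{n-1}_{\mathfrak a B}(B)\cong H^{n-1}_{\mathfrak a}(A)\otimes_A B$, so faithful flatness yields $H^{n-1}_{\mathfrak a}(A)=0\iff H^{n-1}_{\mathfrak a B}(B)=0$; and from $\Ass_B\!\big(M\otimes_A B\big)=\bigcup_{\mathfrak q\in\Ass_A M}\Ass_B(B/\mathfrak q B)$, taking $M=H^{n-1}_{\mathfrak a}(A)$ and $\mathfrak q=\m$ (so $B/\m B=B/\n$), one gets $\m\in\Ass_A H^{n-1}_{\mathfrak a}(A)\iff\n\in\Ass_B H^{n-1}_{\mathfrak a B}(B)$. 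Since $\dim B/\mathfrak a B=t\ge 2$ and $B$ is complete regular with algebraically closed residue field, Ogus's second vanishing theorem \cite[2.11]{O} (see also \cite[1.1]{HL}) gives $H^{n-1}_{\mathfrak a B}(B)=0\iff\Spec^\circ(B/\mathfrak a B)$ is connected; as these rings $B$ are precisely the ones appearing in the definition of absolute connectedness, the connectedness of $\Spec^\circ(B/\mathfrak a B)$ is independent of the chosen $B$ and equals the statement $H^{n-1}_{\mathfrak a}(A)=0$, so $H^{n-1}_{\mathfrak a}(A)\neq 0\iff\Spec^\circ(A/\mathfrak a)$ is not absolutely connected. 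The proposition is thereby reduced to the following claim, which I expect to be the main obstacle: \emph{for $B$ as above, if $\mathfrak b\subseteq B$ has all its minimal primes of the same height and $\dim B/\mathfrak b\ge 2$, then $H^{n-1}_{\mathfrak b}(B)\neq 0$ implies $\n\in\Ass_B H^{n-1}_{\mathfrak b}(B)$}; applied to $\mathfrak b=\mathfrak a B$ (whose minimal primes all have height $g$, the flat map $A\to B$ preserving heights), it closes the chain $\m\in\Ass H^{n-1}_{\mathfrak a}(A)\iff\n\in\Ass H^{n-1}_{\mathfrak a B}(B)\iff H^{n-1}_{\mathfrak a B}(B)\neq 0\iff\Spec^\circ(A/\mathfrak a)$ is not absolutely connected.

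To prove the claim I would induct on the number $s$ of minimal primes of $\mathfrak b$, replacing $\mathfrak b$ by its radical. If $\Spec^\circ(B/\mathfrak b)$ is connected (in particular if $s=1$), then $H^{n-1}_{\mathfrak b}(B)=0$ by Ogus's theorem and there is nothing to prove. Otherwise a disconnection of $\Spec^\circ(B/\mathfrak b)$ produces radical ideals $\mathfrak c_1,\mathfrak c_2$ with $\mathfrak c_1\cap\mathfrak c_2=\mathfrak b$ and $\sqrt{\mathfrak c_1+\mathfrak c_2}=\n$, each $\mathfrak c_i$ the intersection of a proper nonempty subset of the minimal primes of $\mathfrak b$; hence each has fewer than $s$ minimal primes and, since those primes all share one height and $B$ is a catenary domain, $\dim B/\mathfrak c_i=\dim B/\mathfrak b\ge 2$ (this is exactly where the height hypothesis on $I$ is used). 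Mayer--Vietoris, using $H^{n-1}_\n(B)=0$ and $H^n_{\mathfrak c_i}(B)=0$ (Hartshorne--Lichtenbaum \cite[14.1]{a7}, as $\dim B/\mathfrak c_i\ge 1$), yields
\[
0\longrightarrow H^{n-1}_{\mathfrak c_1}(B)\oplus H^{n-1}_{\mathfrak c_2}(B)\longrightarrow H^{n-1}_{\mathfrak b}(B)\longrightarrow H^n_\n(B)\longrightarrow 0,
\]
with $H^n_\n(B)$ the injective hull of $B/\n$. If $H^{n-1}_{\mathfrak c_i}(B)\neq 0$ for some $i$, the inductive hypothesis gives $H^0_\n\big(H^{n-1}_{\mathfrak c_i}(B)\big)\neq 0$, a submodule of $H^0_\n\big(H^{n-1}_{\mathfrak b}(B)\big)$, so $\n\in\Ass H^{n-1}_{\mathfrak b}(B)$; and if both vanish then $H^{n-1}_{\mathfrak b}(B)$ is the injective hull of $B/\n$, so again $\n\in\Ass H^{n-1}_{\mathfrak b}(B)$. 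Assembling the three cases yields the displayed description of $\bigcup_{i\ge 0}\Ass_R^{i+1}(H^i_I(R))$. The one remaining point needing care is the verification that the coefficient-field extension $A\to B$ keeps every minimal prime of $\mathfrak a$ at height $g$ (its fibres being zero-dimensional), which is routine.
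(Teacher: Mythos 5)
Your proof is correct, but it diverges from the paper's at the critical step and is worth contrasting. Both arguments agree on the localization framework and on the implication ``$P\in\Ass^{c}(H^{c-1}_I(R))\Rightarrow\Spec^\circ(R_P/IR_P)$ not absolutely connected'' (faithfully flat base change to $B$ plus Ogus), and both rule out $\height P\le g+1$, though the paper does it more cheaply via the known description of $\Ass H^g_I(R)$ rather than your spectral-sequence computation for the case $n=g+1$. The genuine divergence is in passing from ``$H^{n-1}_{\mathfrak a}(A)\ne 0$'' to ``$\m\in\Ass H^{n-1}_{\mathfrak a}(A)$''. You prove a self-contained local lemma: in a complete regular local ring $B$ with algebraically closed residue field, if $\mathfrak b$ is unmixed and $\dim B/\mathfrak b\ge 2$, then $H^{\dim B-1}_{\mathfrak b}(B)\ne 0$ forces $\n\in\Ass$; the proof is an induction on the number of minimal primes of $\mathfrak b$ driven by a Mayer--Vietoris sequence together with Ogus and Hartshorne--Lichtenbaum. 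The paper avoids this entirely: it shows $P$ is a \emph{minimal} prime of $\Supp H^{c-1}_I(R)$ --- any $Q\subsetneq P$ in the support would have $\height Q=c-1$ (by Grothendieck vanishing), and then $\dim\widehat{R_Q}/I\widehat{R_Q}\ge 1$ forces $(H^{c-1}_I(R))_Q=0$ by Hartshorne--Lichtenbaum, a contradiction --- and minimal primes of the support are automatically associated. The paper's route is shorter and global; yours is longer but isolates a reusable structural fact about the penultimate local cohomology of unmixed ideals in the complete case. One small point you flag as ``routine'' and which indeed is but deserves stating: under the flat local map $A\to B$ with $\m B=\n$, every minimal prime $\mathfrak q$ of $\mathfrak a B$ contracts to a minimal prime of $\mathfrak a$ (by going-down) and has zero-dimensional fiber over it, so $\height\mathfrak q=g$; this is what makes your height bookkeeping in the Mayer--Vietoris step go through.
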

As an immediate application we get 
\begin{corollary}
(with hypotheses as in Proposition \ref{ogus}) For all but finitely many primes $P \supseteq I$ and $\height P \geq g + 2$ we get that $\Spec^\circ(R_P/I_P)$ is  absolutely connected. In particular $\Spec^\circ(R_P/I_P)$ is connected.
\end{corollary}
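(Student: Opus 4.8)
The plan is to derive the corollary directly from Proposition~\ref{ogus} and Corollary~\ref{main-app}. By Proposition~\ref{ogus}, under the stated hypotheses on $R$ and $I$ one has
\[
\left\{ P \mid P \supseteq I,\ \height P \geq g+2,\ \Spec^\circ(R_P/I_P)\ \text{is \emph{not} absolutely connected} \right\} = \bigcup_{i \geq 0} \Ass_R^{i+1}(H^i_I(R)),
\]
and by Corollary~\ref{main-app} the set on the right is finite (here one also uses that $R$ is excellent). Consequently only finitely many primes $P$ with $P \supseteq I$ and $\height P \geq g+2$ can fail to have $\Spec^\circ(R_P/I_P)$ absolutely connected; for every one of the remaining such $P$, $\Spec^\circ(R_P/I_P)$ is absolutely connected, and therefore, by Remark~\ref{c-abs-int}, connected. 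This proves both assertions.

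Granting the earlier results, there is no genuine obstacle at this last step: it is purely a matter of combining Proposition~\ref{ogus} with Corollary~\ref{main-app}. The real content lies in those two inputs. Corollary~\ref{main-app} is where Theorem~\ref{main-T} is spent, after localizing at a prime $P$ of height $\geq g+2$: there $H^{\dim R_P - 1}_{I_P}(R_P)$ is the ``top-but-one'' local cohomology of the regular local ring $R_P$, in which every minimal prime of $I_P$ still has height $g$ and $\dim R_P/I_P = \height P - g \geq 2$. Proposition~\ref{ogus}, in turn, is where Ogus's comparison theorem translates the membership ``$P \in \bigcup_{i\geq 0}\Ass_R^{i+1}(H^i_I(R))$'', equivalently ``$P_P \in \Ass_{R_P} H^{\dim R_P - 1}_{I_P}(R_P)$'', into the failure of connectedness of the punctured spectrum of $R_P/I_P$. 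The one point that would require care in a from-scratch proof of Proposition~\ref{ogus} is obtaining the \emph{equality} of the exceptional locus with this associated-prime set, not merely an inclusion; in the sharp form of Ogus's result cited here this is automatic.
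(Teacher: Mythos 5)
Your argument is correct and is exactly the intended one: the paper presents this corollary as an immediate consequence of Proposition~\ref{ogus} together with Corollary~\ref{main-app}, with the final ``in particular connected'' coming from Remark~\ref{c-abs-int}. Nothing to add.
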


Here is an overview of the contents of the paper. In section two we discuss a few general results that we need. In the next section we discuss the flat extension $R \rt R[[X]]_X$. We need it as an essential technique in our paper requires an uncountable field contained in $R$. In section four we discuss countable prime avoidance. We also give a construction which is used several times in our paper. 
In section five we prove Theorem \ref{main}. In the next section we prove our main result Theorem \ref{main-T}. In section seven we prove the simplicity of a $D$-module. This is needed in the proof of Theorem \ref{main-T}. In section eight we prove Corollary \ref{d4}. Finally in section nine we  give a proof of Corollary \ref{main-app} and Proposition \ref{ogus}. 
\section{Generalities}
In this section we prove some general results. Some of it are perhaps already known to the experts. However we prove them as we do not have a reference.

We first prove the following general result:
\begin{proposition}\label{count-ass}
Let $R$ be a Noetherian ring and let $I$ be an ideal in $R$. Let $M$ be a finitely generated $R$-module. Then
$\Ass_R (H^i_I(M))$ is a countable set.
\end{proposition}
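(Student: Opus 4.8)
The plan is to reduce to a countable situation by induction on $i$ using a Mayer–Vietoris–type or spectral sequence argument, but the cleanest route I would take is through the local cohomology long exact sequences together with the Čech complex description. First I would recall that if $I = (f_1,\ldots,f_n)$ then $H^i_I(M)$ is computed as the cohomology of the Čech complex $\check{C}^\bullet(f_1,\ldots,f_n;M)$, each term of which is a finite direct sum of modules of the form $M_{f_{j_1}\cdots f_{j_k}}$, i.e.\ localizations of the finitely generated module $M$ at countably many elements of $R$. Since a localization $M_f$ is a countable direct limit $\varinjlim (M \xrightarrow{f} M \xrightarrow{f} \cdots)$ of copies of the finitely generated (hence Noetherian) module $M$, and $\Ass$ of a finitely generated module over a Noetherian ring is finite, I would first establish the key sub-lemma: \emph{any countable direct limit $\varinjlim M_n$ of Noetherian $R$-modules along maps $M_n \to M_{n+1}$ has countable $\Ass$} — indeed $\Ass(\varinjlim M_n) \subseteq \bigcup_n \Ass(\operatorname{image}(M_n \to \varinjlim M_n)) \subseteq \bigcup_n \Ass(M_n)$, a countable union of finite sets. (The containment $\Ass(N) \subseteq \bigcup \Ass(N_\alpha)$ for a filtered union $N = \bigcup N_\alpha$ is standard: an associated prime $P = \operatorname{ann}(x)$ already appears in some $N_\alpha$.)

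Next I would propagate countability through the Čech complex. Each $\check{C}^k = \bigoplus_{|J|=k} M_{f_J}$ is a finite direct sum of localizations, each of which is a countable direct limit of copies of $M$, hence has countable $\Ass$; thus every Čech term has countable $\Ass$, and therefore so does every submodule and every subquotient of it — in particular the cocycles $Z^k$, the coboundaries $B^k$, and the cohomology $H^i_I(M) = Z^i/B^i$. The point here is just that $\Ass$ of a subquotient of $N$ is contained in a set controlled by $\Ass(N)$ together with... actually more carefully: $\Ass(N') \subseteq \Ass(N)$ for submodules $N' \subseteq N$, and for a quotient $N/N'$ we only have $\Ass(N/N') \subseteq \Supp(N/N')$, which need not be countable. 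So the subquotient step is not automatic and I would instead argue directly on $H^i_I(M)$ as a subquotient by noting $H^i_I(M)$ embeds (as $Z^i/B^i$) — hmm, this is exactly the subtle point.

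To handle that honestly, I would not pass through arbitrary subquotients but rather use the long exact sequence induction: write $I = (f_1,\ldots,f_n)$, let $I' = (f_1,\ldots,f_{n-1})$ and $f = f_n$, and use the Mayer–Vietoris sequence $\cdots \to H^i_{I'+(f)}(M) \to H^i_{I'}(M)\oplus H^i_{(f)}(M) \to H^i_{I'\cap (f)\text{-ish}}(M) \to \cdots$, or more simply induct using $H^i_I(M) = \varinjlim \Ext^i_R(R/I^t, M)$: each $\Ext^i_R(R/I^t,M)$ is a \emph{finitely generated} $R$-module (as $R$ is Noetherian and $R/I^t$, $M$ are finitely generated), hence has finite $\Ass$, and $H^i_I(M)$ is the countable direct limit over $t$ of these. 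Applying the sub-lemma from the first paragraph immediately gives that $\Ass_R(H^i_I(M))$ is contained in $\bigcup_{t\ge 1}\Ass_R(\Ext^i_R(R/I^t,M))$, a countable union of finite sets, hence countable. This $\Ext$-limit route is by far the shortest and avoids the subquotient trap entirely.

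The main obstacle — and the reason I would ultimately favor the $\varinjlim\Ext$ argument over the raw Čech argument — is precisely that $\Ass$ does not behave well under quotients, so one must realize $H^i_I(M)$ not as a subquotient of something with countable $\Ass$ but as a genuine countable direct limit of modules each having \emph{finite} $\Ass$; once that is set up, the only real content is the elementary fact that an associated prime of a filtered colimit is an associated prime of one of the terms (because the annihilator of any single element is detected at a finite stage), together with the classical finiteness of $\Ass$ for finitely generated modules over Noetherian rings.
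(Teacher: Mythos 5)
Your final route — writing $H^i_I(M) = \varinjlim_t \Ext^i_R(R/I^t,M)$ and using that each $\Ext^i_R(R/I^t,M)$ is finitely generated with finite $\Ass$ — is exactly the paper's proof. The paper packages the step through an auxiliary lemma (a countably generated module over a Noetherian ring has countable $\Ass$, proved by filtering by finitely generated submodules), whereas you argue directly on the colimit; these are the same idea.

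One slip worth flagging in your sub-lemma: the second containment
\[
\bigcup_n \Ass\bigl(\operatorname{image}(M_n \to \varinjlim M_n)\bigr) \;\subseteq\; \bigcup_n \Ass(M_n)
\]
is false in general, because $\operatorname{image}(M_n \to \varinjlim M_n)$ is a \emph{quotient} of $M_n$, not a submodule, and $\Ass$ of a quotient need not sit inside $\Ass$ of the module (e.g.\ $\Ass(\Z/p\Z) \not\subseteq \Ass(\Z)$). The argument is easily repaired and your conclusion stands: each image is finitely generated (being a quotient of the finitely generated $M_n$ over a Noetherian ring), hence already has finite $\Ass$, so the middle union is a countable union of finite sets without invoking the bad containment. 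Correspondingly, your displayed conclusion $\Ass_R(H^i_I(M)) \subseteq \bigcup_t \Ass_R(\Ext^i_R(R/I^t,M))$ should read $\Ass_R(H^i_I(M)) \subseteq \bigcup_t \Ass_R\bigl(\operatorname{image}(\Ext^i_R(R/I^t,M) \to H^i_I(M))\bigr)$. The paper's Lemma \ref{count-lemm} sidesteps this subtlety entirely by writing the countably generated module as an increasing union of genuine finitely generated \emph{submodules} $D_m$, for which $\Ass(D_m) \subseteq \Ass(E)$ holds on the nose. Your instinct to abandon the raw Čech-complex route because of the subquotient problem was exactly right, and it is the same issue that shows up here in milder form.
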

To prove the Proposition we need the following notion. 
\begin{definition}
We say an $R$-module $E$ is \textit{countably generated} if there exists a countable set of elements $\{ e_n \}_{n \geq 1}$ which 
generate $E$ as a $R$-module.
\end{definition} 
The following Lemma is useful:
\begin{lemma}\label{count-lemm}
Let $R$ be a Noetherian ring and let $E$ be a countably generated $R$-module. Then $\Ass_R(E)$ is a countable set.
\end{lemma}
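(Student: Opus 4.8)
The plan is to reduce immediately to the finitely generated case, where one can invoke the classical fact that over a Noetherian ring a finitely generated module has only finitely many associated primes. Fix a countable generating set $\{e_n\}_{n \geq 1}$ of $E$ and let $E_n$ be the submodule generated by $e_1, \dots, e_n$. These form an ascending chain $E_1 \subseteq E_2 \subseteq \cdots$ of finitely generated submodules with $\bigcup_{n \geq 1} E_n = E$.

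Next I would establish the identity $\Ass_R(E) = \bigcup_{n \geq 1} \Ass_R(E_n)$. For the inclusion $\supseteq$, note that each $E_n$ is a submodule of $E$, so $\Ass_R(E_n) \subseteq \Ass_R(E)$. For the inclusion $\subseteq$, suppose $P \in \Ass_R(E)$, so $P = \ann_R(x)$ for some $x \in E$. Since the $E_n$ are an ascending chain whose union is $E$, we have $x \in E_n$ for some $n$, and then the same equality $P = \ann_R(x)$ shows $P \in \Ass_R(E_n)$.

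Finally, since $R$ is Noetherian and each $E_n$ is finitely generated, $\Ass_R(E_n)$ is a finite set, so $\Ass_R(E)$ is a countable union of finite sets and hence countable. There is no genuine obstacle here; the only point requiring (minimal) care is the observation that every individual element of $E$ lies in one of the finitely generated submodules $E_n$, which is exactly what countable generation provides.
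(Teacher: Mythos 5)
Your proof is correct and follows exactly the same approach as the paper: write $E$ as an increasing union of finitely generated submodules $E_n$, show $\Ass_R(E) = \bigcup_n \Ass_R(E_n)$ by tracing each associated prime to the finite stage containing a witness, and conclude via countable union of finite sets.
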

\begin{proof}
Let  $\{ e_n \}_{n \geq 1}$  
generate $E$ as a $R$-module. For $m \geq 1$, let $D_m $ be the $R$-submodule of $E$ generated by $e_1,\cdots,e_m$. Clearly
$D_m \subseteq D_{m+1}$ for all $m \geq 1$ and $E = \bigcup_{m\geq 1} D_m$. Clearly $\bigcup_{m \geq 1} \Ass_R (D_m) \subseteq \Ass_R (E)$. If $P \in \Ass_R(E)$ then $P = (0 \colon u)$ for some $u \in E$. Say $u \in D_r$. Then $P \in \Ass_R (D_r)$. Thus
 $\bigcup_{m \geq 1} \Ass_R (D_m) = \Ass_R (E)$.
 
Since $R$ is Noetherian and $D_m$ a finitely generated $R$-module;  
$\Ass_R (D_m)$ is a finite set for all $m \geq 1$. It follows that $\Ass_R (E)$ is a countable set.
\end{proof}
We now give
\begin{proof}[Proof of Proposition \ref{count-ass}]
Fix $i \geq 0$. Then $H^i_I(M) = \varinjlim \Ext^i_R(R/I^n, M)$. In particular $H^i_I(M)$ is a quotient of
$\bigoplus_{n \geq 1} \Ext^i_R(R/I^n, M)$. It follows that $H^i_I(M)$ is countably generated. The result now follows from Lemma \ref{count-lemm}.
\end{proof}

\s We need to compare maps $R$-linear maps $f,g \colon E \rt F$ where $E, F$ are $R$-modules. 
\begin{definition}
Let $f,g \colon E \rt F$ be $R$-linear maps. We say $f \cong g$ if there exists isomorphisms $\alpha \colon E \rt E$ and $\beta \colon F \rt F$ such that the following diagram commutes:
\[
\xymatrixrowsep{3pc}
\xymatrixcolsep{2.5pc}
\xymatrix{
     E \ar@{->}[r]^{f}
     \ar@{->}[d]^{\alpha}
&   F    \ar@{->}[d]^{\beta} 
\\
 E \ar@{->}[r]^{g}
&   F  
}
\]
\end{definition}
The following result is clear:
\begin{proposition}\label{cong}
Let $R$ be a ring and let $E, F$ be $R$-modules. Let $f,g \colon E \rt F$ be $R$-linear. If $f \cong g$ then we have the following isomorphisms of $R$-modules:
\[
\ker f \cong \ker g, \quad \image f \cong \image g \quad \text{and} \ \ \coker f \cong \coker g. \qed
\]
\end{proposition}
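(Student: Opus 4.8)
The plan is to unwind the definition of $f \cong g$. It supplies isomorphisms $\alpha \colon E \rt E$ and $\beta \colon F \rt F$ with $\beta \circ f = g \circ \alpha$; equivalently $f = \beta^{-1} \circ g \circ \alpha$ and $f \circ \alpha^{-1} = \beta^{-1} \circ g$. From this single relation each of the three claimed isomorphisms is obtained by a one-line diagram chase, so there is no genuine obstacle here --- the only thing to watch is that the restricted maps land where they should and that the two directions compose to the identity.

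First I would treat the kernels. If $x \in \ker f$ then $g(\alpha(x)) = \beta(f(x)) = 0$, so $\alpha$ restricts to an $R$-linear map $\ker f \rt \ker g$. The relation $f \circ \alpha^{-1} = \beta^{-1} \circ g$ shows symmetrically that $\alpha^{-1}$ restricts to an $R$-linear map $\ker g \rt \ker f$, and these two restrictions are mutually inverse; hence $\ker f \cong \ker g$. For the images, since $\alpha$ is surjective we have $\beta(\image f) = \beta(f(E)) = g(\alpha(E)) = g(E) = \image g$, and as $\beta$ is injective this exhibits $\beta$ as an isomorphism $\image f \rt \image g$. For the cokernels, the isomorphism $\beta \colon F \rt F$ carries the submodule $\image f$ onto $\image g$ by the computation just made, so it descends to an isomorphism of quotients $\coker f = F/\image f \rt F/\image g = \coker g$.

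The ``hard part,'' such as it is, is purely bookkeeping: keeping straight in which direction $\alpha$ and $\beta$ (rather than their inverses) act when passing to kernels versus to images, and recording that the induced maps are $R$-linear --- both immediate. I therefore expect the written proof to occupy only a couple of lines, consistent with the remark ``the following result is clear'' preceding the statement.
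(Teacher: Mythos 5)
Your proof is correct, and since the paper gives no proof (the result is labelled ``clear'' with the \(\square\) placed directly in the statement), your diagram-chase argument is exactly the standard verification the authors had in mind.
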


\s Let $R$ be a Noetherian ring and let $I, J$ be  ideals of $R$ with $I \supseteq J$.
Let $\Gamma_I, \Gamma_J$ be the $I$-torsion and respectively the $J$-torsion functor. Let $M$ be an $R$-module. Let us recall the construction of the natural maps $\theta^i_{I,J}(M) \colon H^i_I(M) \rt H^i_J(M)$ for all $i \geq 0$:

Let  $\E$ be an injective resolution of $M$. Then note we have a natural morphism of complexes $\theta \colon \Gamma_I(\E) \rt \Gamma_J(\E)$. Taking cohomology  we obtain our natural maps $\theta^i_{I,J}(M)$ for all $i \geq 0$. 

The following result will be used several times.
\begin{lemma}\label{flat}
Let $R \rt S$ be a flat map of Noetherian rings. Let $I, J$  be  ideals of $R$ with $I \supseteq J$. Let $M$ be an $R$-module. Then for all $i \geq 0$ we have
\[
\theta^i_{I,J}(M)\otimes S \cong  \theta^i_{IS,JS}(M\otimes S).
\]
\end{lemma}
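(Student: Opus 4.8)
The plan is to move everything to the level of complexes, use flatness, and dispose of the single non-formal point with a \v{C}ech-complex acyclicity argument. Fix an injective resolution $\E$ of $M$ over $R$. As recalled just before the statement, $\theta^i_{I,J}(M)$ is obtained by applying $H^i$ to the morphism of complexes $\Gamma_I(\E)\hookrightarrow\Gamma_J(\E)$; here the map is literally the inclusion of subcomplexes of $\E$, since $J^t\subseteq I^t$ forces $\Gamma_I(N)\subseteq\Gamma_J(N)$ for every $R$-module $N$. Denote this morphism of complexes by $\theta$. Because $R\to S$ is flat, $-\otimes_R S$ is exact and commutes with cohomology, and this identification is natural in the complex; applying it to the morphism $\theta$ gives a canonical identification of $H^i(\theta\otimes_R S)$ with $\theta^i_{I,J}(M)\otimes_R S$ as maps of $S$-modules. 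It therefore remains to identify the morphism of complexes $\theta\otimes_R S$ with $\theta_{IS,JS}(M\otimes_R S)$.

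Next I would identify the two subcomplexes. For any $R$-module $N$ and any ideal $K$ of $R$ there is a natural isomorphism $\Gamma_K(N)\otimes_R S\xrightarrow{\sim}\Gamma_{KS}(N\otimes_R S)$ compatible with the inclusions into $N\otimes_R S$: write $\Gamma_K(N)=\varinjlim_t\Hom_R(R/K^t,N)$, use that $R/K^t$ is finitely presented (as $R$ is Noetherian) so that $\Hom_R(R/K^t,N)\otimes_R S\cong\Hom_S(S/K^tS,N\otimes_R S)$, pass to the direct limit (which commutes with $-\otimes_R S$), and note $K^tS=(KS)^t$. Applying this termwise to $\E$ turns $\theta\otimes_R S$ into the inclusion of subcomplexes $\Gamma_{IS}(\E\otimes_R S)\hookrightarrow\Gamma_{JS}(\E\otimes_R S)$ of $\E\otimes_R S$.

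The one genuine point is that $\E\otimes_R S$, although no longer a complex of injective $S$-modules, still computes local cohomology over $S$: concretely, each term $\E^j\otimes_R S$ must be $\Gamma_{KS}$-acyclic for $K\in\{I,J\}$. I would check this with the \v{C}ech complex. If $K$ is generated by $\underline{x}$, with image $\underline{x}'$ in $S$, then $\check C^\bullet(\underline{x}';\,\E^j\otimes_R S)\cong\check C^\bullet(\underline{x};\E^j)\otimes_R S$; since $\E^j$ is injective, hence $\Gamma_K$-acyclic, the complex $\check C^\bullet(\underline{x};\E^j)$ has cohomology concentrated in degree $0$, and tensoring with the flat module $S$ preserves this, so $H^p_{KS}(\E^j\otimes_R S)=0$ for $p>0$. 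Thus $\E\otimes_R S$ is a $\Gamma_{IS}$- and $\Gamma_{JS}$-acyclic resolution of $M\otimes_R S$, and by functoriality of derived functors in the functor, applying $H^i$ to $\Gamma_{IS}(\E\otimes_R S)\hookrightarrow\Gamma_{JS}(\E\otimes_R S)$ yields exactly $\theta^i_{IS,JS}(M\otimes_R S)$. Stringing together the three identifications gives the asserted isomorphism of maps.

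I expect the main obstacle to be the naturality bookkeeping in the previous paragraph: one must make sure that the identification $H^i(-\otimes_R S)\cong H^i(-)\otimes_R S$, the isomorphism $\Gamma_K(-)\otimes_R S\cong\Gamma_{KS}(-\otimes_R S)$, and the acyclic-resolution computation of $R^i\Gamma_{KS}$ are simultaneously compatible with the single morphism of complexes $\Gamma_I(\E)\hookrightarrow\Gamma_J(\E)$, so that the final isomorphism respects the \emph{maps} in the sense of the relation $\cong$ on $R$-linear maps defined above, and not merely their sources, targets, kernels, images and cokernels. Everything else is routine.
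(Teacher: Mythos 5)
Your proof is correct and follows essentially the same route as the paper's: take an injective resolution $\E$ of $M$, identify $\Gamma_K(\E)\otimes_R S$ with $\Gamma_{KS}(\E\otimes_R S)$ compatibly with the inclusion $\Gamma_I\hookrightarrow\Gamma_J$, and observe that $\E\otimes_R S$ remains $\Gamma_{KS}$-acyclic so that it still computes the local cohomology and the natural map over $S$. The only difference is that the paper simply cites \cite[4.1.9]{BSh} for the acyclicity of $\E\otimes_R S$ and \cite[4.3.1]{BSh} for the natural equivalence $\Gamma_K(-)\otimes_R S\cong\Gamma_{KS}(-\otimes_R S)$, whereas you prove these two ingredients from scratch via the \v{C}ech complex and the $\varinjlim\Hom_R(R/K^t,-)$ description.
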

\begin{proof}
Let $\E$ be an injective resolution of $M$. Then $\E \otimes S $ is $\Gamma_{KS}$ acyclic for any ideal $K$ of $R$, see \cite[4.1.9]{BSh} . Furthermore we have a natural equivalence of functors $\Gamma_K \otimes S \cong \Gamma_{KS}$, see \cite[4.3.1]{BSh}. Thus we have a commutative diagram of complexes 
\[
\xymatrixrowsep{3pc}
\xymatrixcolsep{2.5pc}
\xymatrix{
    \Gamma_I(\E)\otimes S \ar@{->}[r]^{\theta(M) \otimes S}
     \ar@{->}[d]^{\alpha}
&   \Gamma_J(\E)\otimes S   \ar@{->}[d]^{\beta} 
\\
  \Gamma_{IS}(\E\otimes S) \ar@{->}[r]^{\theta(M\otimes S)}
&     \Gamma_{JS}(\E\otimes S)
}
\]
where $\alpha$ and $\beta $ are isomorphisms of complexes. The result follows.
\end{proof}
\s \label{setup} Let $R$ be a Noetherian ring. 
Let $I \supseteq J$ be ideals in  $R$. We have a natural map
$\theta^i_{I,J} \colon H^i_I(R) \rt H^i_J(R)$ for each $i \geq 0$.  Fix $i_0 \geq 0$.
Set
\[
\C^{i_0}_R =  \left\{ (I,J) \left| I \supset J  \ \text{and} \  \sharp \Ass_R( \image \theta^{i_0}_{I,J}) = \infty  \right.   \right \}.
\]
Here $\sharp S $ denotes the number of elements in a set $S$. 
We partially order $\C^{i_0}_R $ as follows: Set $(I,J) \preceq (I^\prime, J^\prime)$ if $I^\prime \supseteq I$ and $J^\prime \supseteq J$. It is easy to see that every ascending chain in $\C^{i_0}_R $ stabilizes.  If  $\C^{i_0}_R$ is non-empty then its maximal elements has a peculiar property which we now describe:
\begin{lemma}\label{pec}[with hypotheses as in \ref{setup}]
Assume $(I,J) \in \C^{i_0}_R$ is a maximal element. Let $S$ be a multiplicatively closed subset of $R$. If $(S^{-1}I, S^{-1}J) \in \C^{i_0}_{S^{-1}R}$ then
\begin{enumerate}[\rm (1)]
\item
$(S^{-1}I, S^{-1}J)$ is a maximal element in $\C^{i_0}_{S^{-1}R}$.
\item
$S^{-1}I \cap R = I$ and $S^{-1}J \cap R = J$.
\end{enumerate}
\end{lemma}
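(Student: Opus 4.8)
The plan is to deduce both statements from one mechanism: \emph{contracting} a ``witness'' pair of ideals from $S^{-1}R$ back to $R$ produces a witness pair in $\C^{i_0}_R$ which dominates $(I,J)$ in the order $\preceq$, so that the maximality of $(I,J)$ forces a collapse. Accordingly, I would first record the auxiliary fact:

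$(\star)$ \emph{Let $\mathfrak{A} \supseteq \mathfrak{B}$ be ideals of $S^{-1}R$ and let $\mathfrak{a} = \mathfrak{A} \cap R$, $\mathfrak{b} = \mathfrak{B} \cap R$ be their contractions. If $(\mathfrak{A},\mathfrak{B}) \in \C^{i_0}_{S^{-1}R}$ then $(\mathfrak{a},\mathfrak{b}) \in \C^{i_0}_R$.}

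To prove $(\star)$ one uses that $\mathfrak{a} \supseteq \mathfrak{b}$ and that $S^{-1}\mathfrak{a} = \mathfrak{A}$, $S^{-1}\mathfrak{b} = \mathfrak{B}$ (extension is a left inverse of contraction). Applying $-\otimes_R S^{-1}R$ to $\theta^{i_0}_{\mathfrak{a},\mathfrak{b}}(R)$ and invoking Lemma \ref{flat} together with Proposition \ref{cong} and the exactness of localization gives
\[
S^{-1}\bigl(\image \theta^{i_0}_{\mathfrak{a},\mathfrak{b}}\bigr) \;\cong\; \image \theta^{i_0}_{\mathfrak{A},\mathfrak{B}} .
\]
By hypothesis $\Ass_{S^{-1}R}\bigl(\image \theta^{i_0}_{\mathfrak{A},\mathfrak{B}}\bigr)$ is infinite; since for any module $N$ over the Noetherian ring $R$ the assignment $P \mapsto S^{-1}P$ restricts to a bijection from $\{\, P \in \Ass_R(N) : P \cap S = \emptyset \,\}$ onto $\Ass_{S^{-1}R}(S^{-1}N)$, it follows that $\Ass_R\bigl(\image \theta^{i_0}_{\mathfrak{a},\mathfrak{b}}\bigr)$ is infinite, i.e. $(\mathfrak{a},\mathfrak{b}) \in \C^{i_0}_R$.

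Granting $(\star)$, part (2) is immediate: take $(\mathfrak{A},\mathfrak{B}) = (S^{-1}I, S^{-1}J) \in \C^{i_0}_{S^{-1}R}$; then $(S^{-1}I \cap R,\ S^{-1}J \cap R) \in \C^{i_0}_R$ and this pair dominates $(I,J)$ because $I \subseteq S^{-1}I \cap R$ and $J \subseteq S^{-1}J \cap R$, so maximality of $(I,J)$ gives $S^{-1}I \cap R = I$ and $S^{-1}J \cap R = J$. For part (1), let $(\mathfrak{I},\mathfrak{J}) \in \C^{i_0}_{S^{-1}R}$ with $(S^{-1}I, S^{-1}J) \preceq (\mathfrak{I},\mathfrak{J})$. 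By $(\star)$, $(\mathfrak{I} \cap R,\ \mathfrak{J} \cap R) \in \C^{i_0}_R$, and using part (2) we get $\mathfrak{I} \cap R \supseteq S^{-1}I \cap R = I$ and $\mathfrak{J} \cap R \supseteq S^{-1}J \cap R = J$, so this pair dominates $(I,J)$; maximality forces $\mathfrak{I} \cap R = I$ and $\mathfrak{J} \cap R = J$, and extending back to $S^{-1}R$ yields $\mathfrak{I} = S^{-1}I$, $\mathfrak{J} = S^{-1}J$. Hence $(S^{-1}I, S^{-1}J)$ is maximal in $\C^{i_0}_{S^{-1}R}$.

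The one slightly delicate point — which I would flag as the main obstacle — is the localization statement for $\Ass$ used in $(\star)$: the module $\image \theta^{i_0}_{\mathfrak{a},\mathfrak{b}}$ is in general \emph{not} finitely generated (it is only a subquotient of a countable direct sum of $\Ext$-modules, cf. Proposition \ref{count-ass}), so the familiar finitely generated version of ``$\Ass$ commutes with localization'' cannot be quoted directly. It still holds over a Noetherian ring: if $\mathfrak{q} = S^{-1}P \in \Ass_{S^{-1}R}(S^{-1}N)$ with $P \cap S = \emptyset$, write $\mathfrak{q} = \ann_{S^{-1}R}(x/1)$; since $P$ is finitely generated and each of its generators kills $x$ after multiplication by some element of $S$, there is a single $t \in S$ with $P = \ann_R(tx)$, so $P \in \Ass_R(N)$. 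Everything else in the argument is formal bookkeeping with flat base change (Lemma \ref{flat}) and the congruence relation of Proposition \ref{cong}.
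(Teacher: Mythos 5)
Your proof is correct and takes essentially the same route as the paper: your auxiliary fact $(\star)$ is exactly the paper's Proposition \ref{easy}, proved the same way via Lemma \ref{flat} and Proposition \ref{cong}, and both arguments then finish by contracting a dominating pair and appealing to maximality. The only cosmetic difference is that you prove (2) first and feed it into (1), while the paper establishes the domination $(I,J)\preceq(K\cap R,L\cap R)$ in (1) directly from $I\subseteq S^{-1}I\cap R\subseteq K\cap R$; your flagged caution about $\Ass$ commuting with localization for non-finitely-generated modules over a Noetherian ring is a legitimate point that the paper passes over silently.
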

To prove the Lemma we need the following easy result.
\begin{proposition}\label{easy}
[with hypotheses as in Lemma \ref{pec}]
If $(K,L) \in  \C^{i_0}_{S^{-1}R}$ then $(K\cap R, L \cap R) \in \C^{i_0}_R $.
\end{proposition}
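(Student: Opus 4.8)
The plan is to identify $\image\theta^{i_0}_{K,L}$ (over $S^{-1}R$) with a localization of $\image\theta^{i_0}_{K\cap R,\,L\cap R}$ (over $R$), and then to invoke the fact that, over a Noetherian ring, the associated primes of a localization $S^{-1}N$ inject into those of $N$. Concretely, I would first set $I^\prime = K\cap R$ and $J^\prime = L\cap R$. Then $I^\prime\supseteq J^\prime$ since $K\supseteq L$, and by the standard correspondence between ideals of $S^{-1}R$ and their contractions to $R$ one has $S^{-1}I^\prime = K$ and $S^{-1}J^\prime = L$.

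Next I would apply Lemma \ref{flat} to the flat map $R\rt S^{-1}R$ with $M=R$, which yields
\[
\theta^{i_0}_{I^\prime,J^\prime}(R)\otimes_R S^{-1}R \;\cong\; \theta^{i_0}_{K,L}(S^{-1}R)
\]
as $S^{-1}R$-linear maps. Proposition \ref{cong} then gives an isomorphism of the images as $S^{-1}R$-modules, and since localization is exact we have $\image\big(\theta^{i_0}_{I^\prime,J^\prime}(R)\otimes_R S^{-1}R\big) = S^{-1}\big(\image\theta^{i_0}_{I^\prime,J^\prime}(R)\big)$. Writing $N = \image\theta^{i_0}_{I^\prime,J^\prime}(R)$, this produces an isomorphism $S^{-1}N \cong \image\theta^{i_0}_{K,L}(S^{-1}R)$ of $S^{-1}R$-modules.

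Finally I would use: for a Noetherian ring $R$ and an arbitrary $R$-module $N$, the map $\p\mapsto S^{-1}\p$ is an injection $\Ass_{S^{-1}R}(S^{-1}N)\hookrightarrow\Ass_R(N)$; indeed, if $S^{-1}\p=\ann_{S^{-1}R}(x/1)$ for some $x\in N$, finite generation of $\p$ lets one clear denominators to find $t\in S$ with $\p=\ann_R(tx)\in\Ass_R N$. Hence $\sharp\Ass_{S^{-1}R}(S^{-1}N)\le\sharp\Ass_R(N)$. Since $(K,L)\in\C^{i_0}_{S^{-1}R}$ forces the left-hand side to be infinite, $\Ass_R(N)$ is infinite, i.e. $(K\cap R,\,L\cap R)\in\C^{i_0}_R$, as desired.

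There is no serious obstacle here; the only two points deserving a sentence of care are: (i) checking that the isomorphism coming out of Lemma \ref{flat} together with Proposition \ref{cong} really is $S^{-1}R$-linear, so that comparing associated primes over $S^{-1}R$ is legitimate — this is automatic, since $-\otimes_R S^{-1}R$ is the localization functor and every arrow in Lemma \ref{flat} is $S^{-1}R$-linear; and (ii) the associated-prime comparison for localizations of modules that need not be finitely generated, which is precisely where Noetherianness (finite generation of $\p$) is invoked.
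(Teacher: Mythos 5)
Your argument is the same as the paper's: set $I^\prime = K\cap R$, $J^\prime = L\cap R$, observe $S^{-1}I^\prime = K$ and $S^{-1}J^\prime = L$, and combine Lemma \ref{flat} with Proposition \ref{cong} to identify $S^{-1}\bigl(\image\theta^{i_0}_{I^\prime,J^\prime}(R)\bigr)$ with $\image\theta^{i_0}_{K,L}(S^{-1}R)$, concluding that the source has infinitely many associated primes. The only difference is that the paper dispatches the final step with a terse ``it follows,'' whereas you explicitly justify the injection $\Ass_{S^{-1}R}(S^{-1}N)\hookrightarrow\Ass_R(N)$ for not-necessarily-finitely-generated $N$ over a Noetherian ring; this is a worthwhile clarification but not a different approach.
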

\begin{proof}
Set $K_1 = K \cap R$ and $L_1 = L \cap R$. We note that $S^{-1}K_1 = K$ and $S^{-1}L_1 = L$. By Propositions \ref{flat} and \ref{cong} we get that
\[
S^{-1}\left(\image \theta^{i_0}_{K_1,L_1}(R)\right) \cong \image \theta^{i_0}_{K,L}(S^{-1}R)
\]
It follows that $ \sharp\Ass_R (\image \theta^{i_0}_{K_1,L_1}(R)) = \infty$. So $(K_1,L_1) 
 \in \C^{i_0}_R $.
\end{proof}
We now give
\begin{proof}[Proof of Lemma \ref{pec}]
(1) Suppose $(S^{-1}I, S^{-1}J) \preceq (K,L) $  for some \\ $(K,L) \in  \C^{i_0}_{S^{-1}R}$. Then by Proposition \ref{easy} we get $(K\cap R, L \cap R) \in \C^{i_0}_R $. Notice $(I,J) \preceq (K\cap R, L \cap R)$. By maximality of $(I,J)$ in 
$\C^{i_0}_R $ we get that $I = K\cap R$ and $J = L \cap R$. So $(S^{-1}I, S^{-1}J) = (K,L) $.

(2) Set $K = S^{-1}I$ and $L = S^{-1}J$. Then 
by Proposition \ref{easy} and our hypotheses $(K\cap R, L \cap R) \in \C^{i_0}_R $. Notice
 $(I,J) \preceq (K\cap R, L \cap R)$. So by maximality of $(I,J)$ in 
$\C^{i_0}_R $ we get $I = K\cap R$ and $J = L \cap R$. 
\end{proof}

\s \label{prod} \emph{Product of rings}. Assume $R = R_1\times R_2 \times \cdots \times R_n$. If $R$ is Noetherian then each $R_i$ is Noetherian. Note an ideal $I$ in $R$ is of the form $I_1\times I_2\times \cdots \times I_n$ where $I_j$ is an ideal in $R_j$. Further note that $P$ is a prime in $R$ if and only if $P_i$ is a prime ideal in $R_i$ for some $i$ and $P_j = R_j$ for $j \neq i$. Thus $\Spec(R)$ is a disjoint union of  $\Spec(R_1), \cdots, \Spec(R_n)$.
 
The following result is easy to verify
\begin{proposition}\label{ver}
Let $M_i$ be $R_i$ modules. Then $M = M_1 \times \cdots \times M_n$ is a $R$-module.
Furthermore 
\[
\Ass_R(M) = \bigcup_{i = 1}^{n} \Ass_{R_i}( M_i).
\]
\end{proposition}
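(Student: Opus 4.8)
The plan is to reduce everything to the description of ideals and primes of a finite product of rings recalled in \ref{prod}. First I would equip $M = M_1\times\cdots\times M_n$ with the componentwise action $(r_1,\dots,r_n)\cdot(m_1,\dots,m_n) = (r_1 m_1,\dots,r_n m_n)$; this makes $M$ an $R$-module, the module axioms being inherited coordinate by coordinate. This part is routine.

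For the assertion about associated primes the key elementary observation is that annihilators also decompose coordinatewise: for $x = (x_1,\dots,x_n)\in M$ one has $\ann_R(x) = \ann_{R_1}(x_1)\times\cdots\times\ann_{R_n}(x_n)$. Combining this with the fact from \ref{prod} that every prime of $R$ is of the form $P = R_1\times\cdots\times\p\times\cdots\times R_n$ with $\p\in\Spec(R_i)$ for exactly one index $i$, the two inclusions come out immediately. If $P\in\Ass_R(M)$, write $P = \ann_R(x)$ with $x$ as above; then $\ann_{R_j}(x_j) = R_j$, i.e.\ $x_j = 0$, for all $j\ne i$, while $\ann_{R_i}(x_i) = \p$, so $\p\in\Ass_{R_i}(M_i)$. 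Conversely, if $\p = \ann_{R_i}(x_i)$ for some $x_i\in M_i$, then the element of $M$ having $x_i$ in the $i$-th coordinate and $0$ elsewhere has annihilator $R_1\times\cdots\times\p\times\cdots\times R_n$, which is exactly the prime of $R$ corresponding to $\p$ under the decomposition $\Spec(R) = \Spec(R_1)\sqcup\cdots\sqcup\Spec(R_n)$. Hence $\Ass_R(M)$ equals $\bigcup_{i=1}^{n}\Ass_{R_i}(M_i)$ once the right-hand side is read inside $\Spec(R)$ via that identification.

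I do not expect a genuine obstacle here; the only point that needs a little care is keeping the identification $\Spec(R) = \Spec(R_1)\sqcup\cdots\sqcup\Spec(R_n)$ explicit, so that the union on the right-hand side is literally a subset of $\Spec(R)$ and the stated equality is an equality of sets rather than merely a bijection. (If one preferred, it would also suffice to prove the case $n = 2$ and induct, but since the coordinatewise description is already transparent, treating general $n$ directly is no extra work.)
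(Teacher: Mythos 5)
Your proof is correct and is exactly the natural argument; the paper itself does not supply a proof (it only states the result is ``easy to verify'' and then records the needed identification of primes in Remark~\ref{ver-rem}). Your coordinatewise computation of annihilators, combined with the description of $\Spec(R)$ as the disjoint union of the $\Spec(R_i)$, fills in precisely what the paper leaves implicit, and your explicit attention to the identification of a prime $\p\subseteq R_i$ with $R_1\times\cdots\times\p\times\cdots\times R_n$ matches the paper's own Remark~\ref{ver-rem}.
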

\begin{remark}\label{ver-rem}
In the above Proposition a prime $P$ of $R_i$ is identified with the following  prime
of $R$;
\[
R_1\times \cdots \times R_{i-1}\times P \times R_{i+1} \times \cdots \times R_n.
\]
\end{remark}
The following result takes a little work. However it is completely elementary and so we skip it.
\begin{proposition}\label{prod-loc}
[with hypotheses as in \ref{prod}] We have for each $i \geq 0$ an isomorphism
\[
H^i_I(R) \cong H^i_{I_1}(R_1) \times H^i_{I_2}(R_2)\times \cdots  \times H^i_{I_n}(R_n).
\]
\end{proposition}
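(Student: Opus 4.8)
The plan is to reduce immediately to the case $n = 2$ by induction on $n$ (writing $R = R_1 \times (R_2 \times \cdots \times R_n)$), and then to exploit the fact that the category of $(R_1\times R_2)$-modules is equivalent to the product of the categories of $R_1$-modules and $R_2$-modules. Concretely, every $R$-module is uniquely of the form $M_1 \times M_2$ with $M_j$ an $R_j$-module, one has
$\Hom_R(M_1\times M_2, N_1 \times N_2) = \Hom_{R_1}(M_1,N_1)\times \Hom_{R_2}(M_2,N_2)$, and a complex of $R$-modules is exact precisely when each of its two components is exact.

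First I would record that $E = E_1 \times E_2$ is an injective $R$-module if and only if $E_1$ is injective over $R_1$ and $E_2$ is injective over $R_2$; this is immediate from the displayed description of $\Hom_R$ together with componentwise exactness. It follows that if $\E_1$ and $\E_2$ are injective resolutions of $M_1$ and $M_2$ over $R_1$ and $R_2$ respectively, then $\E_1 \times \E_2$ is an injective resolution of $M_1 \times M_2$ over $R$. Next I would observe that the torsion functor decomposes: since $I = I_1 \times I_2$ satisfies $I^n = I_1^n \times I_2^n$, an element $(x_1,x_2)$ of $M_1 \times M_2$ is annihilated by $I^n$ if and only if $x_1$ is annihilated by $I_1^n$ and $x_2$ by $I_2^n$, so that $\Gamma_I(M_1 \times M_2) = \Gamma_{I_1}(M_1) \times \Gamma_{I_2}(M_2)$, naturally in $M$.

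Applying these two points to $R = R_1 \times R_2$ itself (i.e.\ taking $M_j = R_j$ and an injective resolution $\E_j$ of $R_j$ over $R_j$), we get an equality of complexes of $R$-modules $\Gamma_I(\E_1 \times \E_2) = \Gamma_{I_1}(\E_1) \times \Gamma_{I_2}(\E_2)$. Since the kernels and images in a product of complexes are computed componentwise, cohomology commutes with the finite product, and therefore
\[
H^i_I(R) = H^i\!\left(\Gamma_{I_1}(\E_1)\right) \times H^i\!\left(\Gamma_{I_2}(\E_2)\right) = H^i_{I_1}(R_1) \times H^i_{I_2}(R_2).
\]
The general statement follows by induction on $n$. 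Under these identifications the $R_j$-module $H^i_{I_j}(R_j)$ is regarded as an $R$-module through the projection $R \rt R_j$, compatibly with \ref{prod} and Remark \ref{ver-rem}.

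There is no genuine obstacle here; the only step meriting a moment's care is the characterization of injective $(R_1\times R_2)$-modules as products of injective $R_j$-modules, as it is precisely this that lets us choose the resolution of $M_1\times M_2$ in product form. Everything else is a formal manipulation, which is why the details are routine and can be left to the reader.
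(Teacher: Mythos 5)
Your proof is correct. Note, however, that the paper explicitly declines to give a proof of this proposition, remarking only that ``it is completely elementary and so we skip it''; so there is no argument in the paper to compare against. The route you take --- identifying the category of $R$-modules with the product of the categories of $R_j$-modules, characterizing injective $R$-modules as products of injective $R_j$-modules, observing that $\Gamma_I$ decomposes componentwise, and then taking cohomology of the product complex componentwise --- is the standard elementary argument and fills in the omitted details cleanly, including the point that the resulting identification is one of $R$-modules via the projections $R \rt R_j$ in the sense of Remark \ref{ver-rem}.
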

The following remark will be used often.
\begin{remark}\label{red-domain}
Let $R$ be a regular ring. Let $\{ P_1,\cdots, P_n \}$ be minimal primes of $R$.  Set 
$R_i = R/P_i$.
Then 
$$R \cong R_1 \times R_2 \times \cdots \times R_n \quad \text{see \cite[Exercise 9.11]{Mat}}.$$ 
Notice $R_i$ are regular domains.

If $R$ is excellent then each $R_i$ is excellent. Furthermore if $\dim R = d$ then it is easy to see that $\dim R_i \leq d$ for all $i$ and $\dim R_m = d$ for some $m$.

By \ref{prod-loc} and \ref{ver} it follows that if $I = I_1\times \cdots \times I_n$
then $\Ass H^i_I(R)$ is finite if and only if $\Ass H^i_{I_j}(R_j)$ is finite for all 
$j$. Thus for the questions we are interested it suffices to assume $R$ is a domain.
\end{remark}
\section{The flat extension $R \rt R[[X]]_X$}\label{flat-s}
In our arguments we need to assume that $R$ contains an uncountable field. When this is not the case we consider the flat extension $R \rt R[[X]]_X$.
Set $S = R[[X]]$ and $T = S_X = R[[X]]_X$, i.e., the ring obtained by inverting $X$.

\begin{remark}\label{passage}
(i) If $R$ contains a countable field $K$ then note $K[[X]]$ is a subring of $ S$ and so $K[[X]]_X $ is a subring of $T$. The field $K[[X]]_X$ is uncountable. Thus $T$ contains an uncountable field.

(ii) If $R$ is regular then so is $S$, see \cite[2.2.13]{BH}. Therefore $T$ is also regular.

(i) Let $R$ be an excellent regular  ring of finite dimension containing a field of characteristic zero. As   $S = R[[X]]$ is regular, it is universally catenary. Also $(X) \subseteq \rad S$ and $S/(X) = R$ is excellent. So by \cite{R} we get $S$ is excellent. It follows that $T$ is excellent.

\end{remark}

The following proposition gives information about behavior of primes when we pass from
$R$ to $T$.
\begin{proposition}\label{primes}[with hypotheses as above]
Let $\p$ be a prime in $R$. Then
\begin{enumerate}[\rm (i)]
\item
$\p T$ is a prime in $T$.
\item
$T$ is a faithfully flat extension of $R$.
\item
$\p T \cap R = \p$.
\item
$\height \p T = \height \p$.
\item
Let $P$ be a prime ideal in $T$. If $P \cap R = \p$ then $P = \p T$.
\item
$\dim T = \dim R$.
\end{enumerate}
\end{proposition}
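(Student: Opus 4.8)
The plan is to reduce everything to two facts: that $R \rt T$ is faithfully flat, and the identification $T/\p T \cong (R/\p)[[X]]_X$, i.e. $X$ inverted in the power series ring over the domain $R/\p$. First I would record flatness of $R \rt S := R[[X]]$. For a finitely generated ideal $\aF$ of the Noetherian ring $R$ one has $S\otimes_R R/\aF \cong (R/\aF)[[X]]$ and $\aF S = \aF[[X]]$ (the power series with coefficients in $\aF$); substituting this into the sequence obtained by tensoring $0 \rt \aF \rt R \rt R/\aF \rt 0$ with $S$ shows $\operatorname{Tor}^R_1(S,R/\aF) = 0$, so $S$ is flat over $R$, and since $\m S = \m[[X]] \neq S$ for every maximal ideal $\m$, it is faithfully flat. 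As $T = S_X$ is a localization of $S$, the map $R \rt T$ is flat, and $T/\m T = ((R/\m)[[X]])_X$ is the Laurent series field over $R/\m$, hence nonzero; so $R \rt T$ is faithfully flat, which is (ii). Then (iii) is the standard equality $\aF T \cap R = \aF$ valid for any faithfully flat extension, and (i) holds because $T/\p T \cong (R/\p)[[X]]_X$ is a localization of the domain $(R/\p)[[X]]$, hence itself a domain.

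For (iv): by (iii) every element of $R \setminus \p$ becomes a unit in $T_{\p T}$, so $R \rt T_{\p T}$ factors through a flat local homomorphism $R_\p \rt T_{\p T}$ (flatness: localize the flat $R$-algebra $T$, then base change along $R \rt R_\p$). Its closed fibre $T_{\p T}/\p T_{\p T}$ is the localization of $T/\p T \cong (R/\p)[[X]]_X$ at the image of $\p T$, which is the zero ideal; hence the closed fibre is the fraction field of $(R/\p)[[X]]$, a field of dimension $0$. The dimension formula for flat local homomorphisms now gives $\height \p T = \dim T_{\p T} = \dim R_\p = \height \p$.

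For (vi): taking a chain $\p_0 \subsetneq \cdots \subsetneq \p_n$ in $\Spec R$ with $n = \dim R$ and applying (i) and (iii) we get a chain $\p_0 T \subsetneq \cdots \subsetneq \p_n T$ in $\Spec T$, so $\dim T \geq \dim R$. Conversely, $X$ lies in the Jacobson radical of $S = R[[X]]$ (a maximal ideal missing $X$ would contain the unit $1 - Xf$), and $\dim S = \dim R + 1$ by \cite{Mat}. Any prime of $T$ is $QT$ for a prime $Q$ of $S$ with $X \notin Q$, and $\height_T QT = \height_S Q$; choosing a maximal ideal $\M \supseteq Q$ we have $X \in \M \setminus Q$, so $QS_\M \subsetneq QS_\M + XS_\M \subsetneq S_\M$, and a minimal prime over $QS_\M + XS_\M$ strictly contains $QS_\M$, whence $\height Q = \height QS_\M \leq \dim S_\M - 1 \leq \dim S - 1 = \dim R$. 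Thus $\dim T = \dim R$.

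I expect (v) to be the main obstacle. From $P \cap R = \p$ we get $\p T \sub P$, and passing to $T/\p T \cong (R/\p)[[X]]_X$ the claim becomes: the only prime of $(R/\p)[[X]]_X$ contracting to $(0)$ in $R/\p$ is $(0)$. When $\p$ is maximal this is immediate, since $(R/\p)[[X]]_X$ is then a field. For general $\p$ one must control the primes of the power series ring $(R/\p)[[X]]$ lying over $(0)$ in the domain $R/\p$; combined with (iv) (which already yields $\p T \sub P$ with $\height \p T = \height \p$) it is enough to exclude a strict inclusion $\p T \subsetneq P$, equivalently to show that the generic fibre $T\otimes_R \kappa(\p)$ of $\Spec T \rt \Spec R$ has $\p T$ as its only prime. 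This analysis of power series rings over a domain is the delicate step, and is where I would concentrate the work, presumably by using excellence to reduce to the completion of $R/\p$.
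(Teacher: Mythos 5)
Your parts (i)--(iv) are correct, and your proof of (vi) is correct and is a genuinely different argument from the paper's: the paper derives (vi) from (v), whereas you observe directly that every prime $Q$ of $S=R[[X]]$ with $X\notin Q$ sits inside a maximal ideal of $S$ that also contains $X$ (since $X$ is in the Jacobson radical of $S$), forcing $\height_S Q \le \dim S - 1 = \dim R$. That argument is sound and, as it turns out, necessary, because the paper's derivation of (vi) passes through (v).

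Your suspicion about (v) is more than warranted: as stated, (v) is \emph{false}. Take $R = k[t]$ ($k$ a field), $\p = (0)$, and $P = (t-X)T$. The quotient $S/(t-X) = k[t][[X]]/(t-X)$ is isomorphic to $k[[X]]$ via $t\mapsto X$, a domain, so $(t-X)$ is a prime of $S$ not containing $X$, and $P$ is a prime of $T$. The composite $k[t] \to S/(t-X) \cong k[[X]]$ sends $t\mapsto X$ and is injective, so $P\cap R = (t-X)\cap k[t] = (0) = \p$; but $\p T = 0 \ne P$. (Note $P$ is even \emph{maximal} in $T$, with residue field $k((X))$, yet lies over the non-maximal prime $(0)$ of $R$.) The paper's proof of (v) asserts that $T_P/\p T_P$ is a localization of $\kappa(\p)[[X]]$; this tacitly identifies $(R/\p\setminus\{0\})^{-1}\bigl((R/\p)[[X]]\bigr)$ with $\kappa(\p)[[X]]$, which fails because localization does not commute with power series (e.g.\ $\sum X^n/n!$ lies in $\Q[[X]]$ but not in $\Z[[X]]$ localized at the nonzero integers). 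Your closing suggestion --- that excellence might rescue (v) by reducing to the completion of $R/\p$ --- won't work either, since $k[t]$ is already excellent.

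The practical upshot: in the paper, (v) is used only to prove (vi), and Theorem~\ref{corr} (the actual point of this section) relies only on (i), (iii), (iv). So the error in (v) is isolated, and your independent proof of (vi) closes the gap it would otherwise leave. You should simply drop (v), replace the paper's proof of (vi) by yours, and note that (v) is not in fact needed anywhere downstream.
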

\begin{proof}
(i) Clearly $\p S$ is a prime in $S$. Also $X \notin \p S$. As $\p T$ is localization of $\p S$ we get that it is prime in $T$.

(ii) $T$ is clearly a flat $R$-algebra. Let $\m $ be a maximal ideal of $R$. Then $\m T$ is a prime ideal of $T$. In particular
$\m T \neq T$. So $T$ is faithfully flat \cite[7.2]{Mat}.

(iii) We have 
$$\p T \cap R = \p T \cap S \cap R = \p S \cap R = \p. $$

(iv) By \cite[Theorem 15.1]{Mat} we get
\[
\height \p T = \height \p + \dim T_{\p T}/\p T_{\p T}
\]
As $\p T_{\p T}$ is the maximal ideal in $T_{\p T}$ we get the required result.

(v) Again by \cite[Theorem 15.1]{Mat} we get
\[
\height P = \height \p + \dim T_{P}/\p T_{P}.
\]
Set $\kappa(\p) = A_\p/\p A_\p$, the residue field of $A_\p$. Then note that
$T_{P}/\p T_{P}$ is a localization of $\kappa(\p)[[X]]$ at a multiplicatively closed set containing $X$. It follows that $T_{P}/\p T_{P}$ is a field. So $\height P = \height \p$. Now $P$ contains the prime ideal $\p T$ which by (iii) also has $\height  =  \height \p$. So $P = \p T$.

(vi) This follows easily from (iv) and (v).
\end{proof}

We need Theorem 23.3 from \cite{Mat}.   Unfortunately  there is a typographical error in the statement of Theorem 23.3 in \cite{Mat}.
So we state it here.
\begin{theorem}\label{23}
Let $\varphi \colon A \rt B$ be a homomorphism of Noetherian rings, and let $E$ be an $A$-module and $G$ a $B$-module. Suppose that
$G$ is flat over $A$; then we have the following:
\begin{enumerate}[\rm (i)]
\item
if $\p \in \Spec A$ and $G/\p G \neq 0$ then
\[
 ^a \varphi \left( \Ass_B(G/\p G)  \right) = \Ass_A (G/\p G) = \{  \p \}.
\]
\item
$\displaystyle{\Ass_B(E\otimes_A G) = \bigcup_{\p \in \Ass_A(E) } \Ass_B(G/\p G).}$
\end{enumerate}
\end{theorem}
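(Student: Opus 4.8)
This is Matsumura's base-change statement for associated primes, \cite[Theorem~23.3]{Mat}, and my plan is to prove it from scratch; the only step needing real care is the inclusion ``$\subseteq$'' of part~(ii). For part~(i) I would view $G/\p G=(A/\p)\otimes_A G$ as an $A$-module: since $A/\p$ is a domain and $G$ is $A$-flat, multiplication by any nonzero $\bar a\in A/\p$ is injective on $G/\p G$, so every element of $A\setminus\p$ is a nonzerodivisor on $G/\p G$ while $\p$ annihilates $G/\p G$. Hence any $\q\in\Ass_A(G/\p G)$ contains $\p$ and is disjoint from $A\setminus\p$, forcing $\q=\p$; as $G/\p G\neq0$ and $A$ is Noetherian, $\Ass_A(G/\p G)\neq\emptyset$, giving $\Ass_A(G/\p G)=\{\p\}$. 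For the assertion about ${}^a\varphi$, I would use that for any $B$-module $L$ and any $Q=\ann_B(x)\in\Ass_B(L)$ one has $\ann_A(x)=\varphi^{-1}(Q)={}^a\varphi(Q)\in\Ass_A(L)$; applied to $L=G/\p G$ this gives ${}^a\varphi(\Ass_B(G/\p G))\sub\{\p\}$, and equality follows because $\Ass_B(G/\p G)\neq\emptyset$.

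For part~(ii) the inclusion ``$\supseteq$'' is immediate: given $\p\in\Ass_A(E)$, an injection $A/\p\hookrightarrow E$ stays injective after applying $-\otimes_A G$ by flatness, so $G/\p G$ is a $B$-submodule of $E\otimes_A G$, whence $\Ass_B(G/\p G)\sub\Ass_B(E\otimes_A G)$. For ``$\subseteq$'' I would take $Q=\ann_B(z)\in\Ass_B(E\otimes_A G)$ and set $\p=\varphi^{-1}(Q)$, then reduce to the case where $\p$ is the maximal ideal of a local ring $A$: since $Q\cap\varphi(A\setminus\p)=\emptyset$, localizing $A$ at $\p$ and correspondingly replacing $B$ by $(A\setminus\p)^{-1}B\cong B\otimes_A A_\p$, $E$ by $E_\p$, and $G$ by the still-$A_\p$-flat module $A_\p\otimes_A G$, preserves the membership of $Q$ in the relevant $\Ass_B$ as well as the two statements ``$\p\in\Ass_A(E)$'' and ``$Q\in\Ass_B(G/\p G)$''.

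Now $\varphi(\p)\sub Q=\ann_B(z)$ forces $\p z=0$, i.e.\ $z\in\Hom_A(A/\p,E\otimes_A G)$. Choosing a finite presentation of $A/\p$ (possible as $A$ is Noetherian) and using flatness of $G$, the functor $\Hom_A(A/\p,-)$ commutes with $-\otimes_A G$, so $\Hom_A(A/\p,E\otimes_A G)\cong\Hom_A(A/\p,E)\otimes_A G=E'\otimes_k(G/\p G)$, where $k=A/\p$ is now a field and $E'=(0:_E\p)$ is a $k$-vector space. Since $z\neq0$ we have $E'\neq0$ and $z$ lies in $V\otimes_k(G/\p G)\cong(G/\p G)^m$ for some finite-dimensional $V\sub E'$ with $m\geq1$; writing $z=(z_1,\dots,z_m)$, from $Q=\bigcap_j\ann_B(z_j)$ and primeness of $Q$ I get $Q=\ann_B(z_{j_0})$ for some $z_{j_0}\neq0$, hence $Q\in\Ass_B(G/\p G)$. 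Finally any nonzero vector of $E'=(0:_E\p)$ has annihilator the maximal ideal $\p$, so $\p\in\Ass_A(E)$, and therefore $Q\in\bigcup_{\p\in\Ass_A(E)}\Ass_B(G/\p G)$.

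The step I expect to be the main obstacle is this reduction to the local case together with the $\Hom$--flat base change: one has to verify carefully that localizing at $\p=\varphi^{-1}(Q)$ destroys no associated-prime information on either side, since it is exactly this that lets one trade $E$ for the finite-dimensional vector space $(0:_E\p)$ over the residue field (Noetherianness of $A$ entering through the finite presentation of $A/\p$). Everything afterwards — decomposing $z$ into coordinates and reading off simultaneously $Q\in\Ass_B(G/\p G)$ and $\p\in\Ass_A(E)$ — is routine.
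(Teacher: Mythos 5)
The paper does not prove this statement: it is quoted verbatim (with a typo corrected) as Theorem~23.3 of Matsumura's \emph{Commutative ring theory}, so there is no ``paper's own proof'' to compare against. Your blind proof is nonetheless correct and essentially retraces the standard argument that one finds in Matsumura.

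A few spot-checks of the delicate points, all of which you handle correctly. In part~(i), flatness of $G$ together with the domain $A/\p$ gives that $A\setminus\p$ acts by nonzerodivisors on $G/\p G$ while $\p$ kills it, pinning down $\Ass_A(G/\p G)=\{\p\}$ once one notes $\Ass_A\neq\emptyset$ for a nonzero module over a Noetherian ring; the ${}^a\varphi$ assertion follows because the contraction of $\ann_B(x)$ is $\ann_A(x)$. In part~(ii) ``$\subseteq$'', the localization at $\p=\varphi^{-1}(Q)$ is legitimate and preserves all three memberships because of the standard compatibility of $\Ass$ with localization on both sides; after reduction $\p$ is maximal so $k=A/\p$ is a field, the finite presentation of $A/\p$ and flatness of $G$ give $\Hom_A(A/\p,E\otimes_A G)\cong(0:_E\p)\otimes_k(G/\p G)$, and passing to a finite-dimensional subspace $V$ containing the preimage of $z$ lets you write $z=(z_1,\dots,z_m)$ with $Q=\bigcap_j\ann_B(z_j)\supseteq\prod_j\ann_B(z_j)$, whence primeness of $Q$ forces $Q=\ann_B(z_{j_0})$ for some $j_0$ with $z_{j_0}\neq 0$. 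The final observation that any nonzero element of $(0:_E\p)$ has annihilator exactly the maximal ideal $\p$ gives $\p\in\Ass_A(E)$. This is a complete and correct proof; since the paper only cites the result, there is nothing to contrast.
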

\begin{remark}
In  \cite{Mat},  $\Ass_A(E\otimes G)$ is typed instead of $\Ass_B(E\otimes G)$. Also note that
$^a \varphi(P) = P \cap A$ for $P \in \Spec B$.
\end{remark}

\s Let $M$ be an $R$-module. Set 
$$\Ass^i_R(M)  = \{ P \mid  P \in \Ass_R(M) \ \text{and} \ \height P = i \}. $$

We now state the main result of this section.
\begin{theorem}
\label{corr}
Let $R$ be a Noetherian ring and let $M$ be an $R$-module. Set $S = R[[X]]$ and $T = S_X$. Then
\begin{enumerate}[\rm (1)]
\item
The mapping defined by
\begin{align*}
\psi \colon \Ass_R(M) &\rt \Ass_T (M\otimes_R T) \\
               \p &\rt \p T
\end{align*}
is a bijection.
\item
 $\psi$ maps $\Ass^i_R(M)$ bijectively to $\Ass^i_T(M\otimes_R T)$.
\end{enumerate}
\end{theorem}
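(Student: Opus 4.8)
The plan is to deduce everything from the two results just quoted: Matsumura's Theorem \ref{23} applied to the flat extension $\varphi \colon R \rt T$, together with Proposition \ref{primes}. First I would note that by Proposition \ref{primes}(ii) the map $R \rt T$ is (faithfully) flat, so Theorem \ref{23}(ii) with $A = R$, $B = T$, $E = M$ and $G = T$ gives
\[
\Ass_T(M\otimes_R T) = \bigcup_{\p \in \Ass_R(M)} \Ass_T(T/\p T).
\]
Thus statement (1) reduces to the identity $\Ass_T(T/\p T) = \{\p T\}$ for each prime $\p$ of $R$ (together with injectivity of $\psi$, handled below).

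To prove $\Ass_T(T/\p T) = \{\p T\}$: by Proposition \ref{primes}(i) the ideal $\p T$ is a proper prime ideal of $T$, so in particular $T/\p T \neq 0$ and $\p T \in \Ass_T(T/\p T)$. Conversely, let $P \in \Ass_T(T/\p T)$. Theorem \ref{23}(i) (again with $G = T$, which is flat over $R$ and satisfies $T/\p T \neq 0$) tells us that ${}^a\varphi(\Ass_T(T/\p T)) = \{\p\}$, i.e. $P \cap R = \p$. But then Proposition \ref{primes}(v) forces $P = \p T$. Hence $\Ass_T(T/\p T) = \{\p T\}$, and combining with the displayed formula we conclude that $\psi$ is well defined and surjective with image exactly $\Ass_T(M\otimes_R T)$. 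For injectivity of $\psi$, if $\p T = \q T$ for primes $\p, \q$ of $R$, then by Proposition \ref{primes}(iii) we get $\p = \p T \cap R = \q T \cap R = \q$. This proves (1).

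Part (2) is then immediate: by Proposition \ref{primes}(iv) we have $\height \p T = \height \p$ for every prime $\p$ of $R$, so the bijection $\psi$ carries the subset $\Ass^i_R(M)$ of primes of height $i$ onto the subset $\Ass^i_T(M\otimes_R T)$ of primes of height $i$.

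I do not expect a genuine obstacle here; the argument is essentially a bookkeeping combination of Theorem \ref{23} and Proposition \ref{primes}. The only points needing a moment's care are checking that the hypotheses of Theorem \ref{23} are met --- namely that $G = T$ is flat over $R$ and that $T/\p T \neq 0$ --- both of which are supplied by Proposition \ref{primes}; and, for thoroughness, observing that the degenerate case $M = 0$ (equivalently $M\otimes_R T = 0$) makes both sides empty, so the claim holds trivially.
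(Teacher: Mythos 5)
Your argument is correct and follows the same route as the paper: apply Theorem \ref{23}(ii) to the flat extension $R\rt T$, identify $\Ass_T(T/\p T)$ as $\{\p T\}$, and read off heights via Proposition \ref{primes}(iv). The only cosmetic difference is that you establish $\Ass_T(T/\p T)=\{\p T\}$ through Theorem \ref{23}(i) plus Proposition \ref{primes}(v), whereas the paper just notes that since $\p T$ is prime (Proposition \ref{primes}(i)) the quotient $T/\p T$ is a domain, so its unique associated prime is $\p T$; your detour is harmless but unnecessary.
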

\begin{proof}
(1) By Theorem \ref{23} we get that
\[
\Ass_T(M\otimes_R T) = \bigcup_{\p \in \Ass_R(M)}\Ass_T(T/\p T)   
\]
By  Proposition \ref{primes}(i) we get that $\p T$ is a prime ideal in $T$. So $\Ass_T T/\p T = \{ \p T\}$. Thus
\[
\Ass_T(M\otimes_R T) =  \{ \p T \mid \p \in \Ass_R(M) \}.
\]
So the map $\psi$ is well-defined and surjective. By Proposition \ref{primes}(iii) we get that it is injective.

(2) This follows from (1) and Proposition \ref{primes}(iv).
\end{proof}

An immediate corollary is
\begin{corollary}\label{cc}
[with hypotheses as in Theorem \ref{corr}]
\begin{enumerate}[\rm (1)]
\item
$\Ass_R M$ is an infinite set if and only if $\Ass_T M\otimes_R T$ is an infinite set.
\item
$\Ass^i_R M$ is an infinite set if and only if $\Ass_T^i M\otimes_R T$ is an infinite set.  \qed
\end{enumerate}
\end{corollary}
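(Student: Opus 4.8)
The plan is to simply invoke Theorem~\ref{corr}, which already does all the real work. For part~(1), Theorem~\ref{corr}(1) asserts that the assignment $\p \mapsto \p T$ defines a \emph{bijection} $\psi \colon \Ass_R(M) \rt \Ass_T(M\otimes_R T)$. Since a bijection of sets preserves cardinality, the domain is infinite precisely when the codomain is; this yields the first equivalence at once.

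For part~(2) the same argument applies verbatim, now using the refined assertion Theorem~\ref{corr}(2): the bijection $\psi$ restricts to a bijection from $\Ass^i_R(M)$ onto $\Ass^i_T(M\otimes_R T)$, so again one of these sets is infinite if and only if the other is. There is no genuine obstacle here — the entire content has been isolated in Theorem~\ref{corr}, whose proof in turn rested on Theorem~\ref{23} (flat base change for associated primes) together with Proposition~\ref{primes}(i),(iii),(iv), which record that $\p T$ is prime, that $\p T \cap R = \p$, and that $\height \p T = \height \p$ under the flat extension $R \rt T = R[[X]]_X$. Accordingly I would give the proof in one or two lines and mark it complete with \qed, exactly as an immediate corollary warrants.
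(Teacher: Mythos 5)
Your proof is correct and matches the paper's intent exactly: the paper presents Corollary \ref{cc} as an immediate consequence of Theorem \ref{corr} (with no written proof beyond \qed), and your observation that a bijection preserves cardinality, applied to $\psi$ and its restriction to height-$i$ primes, is precisely the one-line argument being invoked.
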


\section{countable prime avoidance}
\s \label{set1} \emph{Setup:} In this section $R$ is a Noetherian ring containing an uncountable field $K$. We describe a construction which we will use often.
The essential ingredient is the following  well-known countable avoidance. We give a proof due to lack of a suitable reference.
\begin{lemma}\label{countable}
(with hypotheses as in \ref{set1}) Let $\{I_n\}_{n \geq 1}$ be  ideals in $R$ and let $J$ be another ideal in $R$.
If $J \subseteq \bigcup_{n \geq 1}I_n$ then $J \subseteq I_m$ for some $m$.
\end{lemma}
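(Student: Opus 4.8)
The plan is to reduce everything to a single induction on the number of generators of $J$, using the Noetherian hypothesis only to guarantee that $J$ \emph{is} finitely generated. Concretely, I would prove by induction on $s \geq 0$ the following statement: \emph{if $R$ is a Noetherian ring containing an uncountable field $K$, $J$ is an ideal of $R$ generated by at most $s$ elements, and $\{I_n\}_{n\geq 1}$ are ideals with $J \subseteq \bigcup_{n\geq 1} I_n$, then $J \subseteq I_m$ for some $m$.} Lemma \ref{countable} is then the case where $s$ is the number of generators of $J$.

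The cases $s = 0$ (so $J = 0$) and $s = 1$ (so $J = (g)$, whence $g \in \bigcup_n I_n$ forces $g \in I_m$, i.e. $J \subseteq I_m$, for some $m$) are immediate, and use neither the field nor Noetherianity.

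For the inductive step one writes $J = (g_1, \dots, g_s)$ with $s \geq 2$, and for each $\lambda \in K$ sets $J_\lambda = (g_1 + \lambda g_2, g_3, \dots, g_s)$. Each $J_\lambda$ is generated by $s-1$ elements and satisfies $J_\lambda \subseteq J \subseteq \bigcup_n I_n$, so by the inductive hypothesis $J_\lambda \subseteq I_{m(\lambda)}$ for some index $m(\lambda)$. The assignment $\lambda \mapsto m(\lambda)$ is a map from the uncountable set $K$ to the countable index set $\{1,2,3,\dots\}$, hence is far from injective: there are distinct $\lambda, \lambda' \in K$ with $m(\lambda) = m(\lambda') =: m$, so $J_\lambda + J_{\lambda'} \subseteq I_m$. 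Then $I_m$ contains $(g_1 + \lambda g_2) - (g_1 + \lambda' g_2) = (\lambda - \lambda') g_2$; since $\lambda - \lambda'$ is a nonzero element of the field $K \subseteq R$, it is a unit of $R$, so $g_2 \in I_m$, and therefore $g_1 = (g_1 + \lambda g_2) - \lambda g_2 \in I_m$ as well, while $g_3, \dots, g_s \in J_\lambda \subseteq I_m$ trivially. Hence $J \subseteq I_m$, which completes the induction.

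The argument is essentially formal once it is set up this way, so I do not anticipate a genuine obstacle; the two points to keep in mind — and the only places the hypotheses are really used — are that the difference of two distinct elements of $K$ is a \emph{unit} of $R$ (so that $g_2$ can be extracted from $(\lambda-\lambda')g_2$), and that the \emph{uncountability} of $K$ against the countability of the index set is precisely what forces the pigeonhole collision. Finite generation of $J$ is likewise essential: without it the statement fails, since a $K$-vector space with countably infinite basis $\{e_i\}_{i\geq 1}$ is the union of the proper subspaces $\langle e_1, \dots, e_n\rangle$ over \emph{any} field $K$.
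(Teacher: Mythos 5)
Your proof is correct, but it takes a different route from the paper's. The paper's argument is shorter and more conceptual: it writes $J=(x_1,\dots,x_c)$, sets $V = Kx_1 + \cdots + Kx_c$, observes that $V = \bigcup_n (V \cap I_n)$ is a countable union of $K$-subspaces of the finite-dimensional space $V$, and then invokes the standard fact that a finite-dimensional vector space over an uncountable field cannot be covered by countably many proper subspaces, forcing $V \cap I_m = V$ (hence $J \subseteq I_m$) for some $m$. Your induction on the number of generators with the pigeonhole collision over $\lambda \in K$ is, in essence, an inlined proof of that vector-space covering fact, transported to the ideal-theoretic setting. What you gain is self-containedness — you never have to appeal to an unproved external lemma, and the role of uncountability versus the countable index set is made explicit at the pigeonhole step. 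What the paper's version buys is brevity and a clean reduction to a reusable linear-algebra statement. One small remark: for the case $s=0$ (i.e. $J=0$) you tacitly need the family $\{I_n\}$ to be nonempty, which is harmless given the indexing $n \geq 1$, and your observation that finite generation is genuinely necessary (the counterexample $\bigcup_n \langle e_1,\dots,e_n\rangle$) is a worthwhile sanity check that the paper omits.
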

\begin{proof}
Let $J = (x_1,\ldots,x_c)$. Let $V = Kx_1 + K  x_2 + \cdots + K x_c$. Then $V$ is a finite dimensional $K$-vector-space.
Also clearly $V = \bigcup_{n \geq 1} V \cap I_n$. As $K$ is an uncountable field we get that $V\cap I_m = V$ for some $m$. Thus 
$x_i \in I_m$ for all $i$. So $J \subseteq I_m$.
\end{proof}
We now give our
\s \label{const-K} \textbf{Construction:} (with hypotheses as in \ref{set1}).  Assume $\{ \p_n \}_{n \geq 1}$ is a sequence of primes in $R$. Also assume that $\p_i \nsubseteq \p_j$ for $i \neq j$. Consider the multiplicatively closed set 
\[
S = R \setminus \bigcup_{n \geq 1} \p_n.
\]
Set $T = S^{-1}R$. 
The following result gives information about primes in $T$.
\begin{proposition}
\label{prop-const}(with hypotheses as in \ref{const-K}). 
We have
\begin{enumerate}[\rm (1)]
\item
If $\p T$ is a prime in $T$, where $\p$ is a prime in $R$, then $\p \subseteq \p_n$ for some $n$.
\item
$\p_iT \nsubseteq \p_j T$ for $i \neq j$.
\item
$\p_n T$ are distinct maximal ideals of $T$.
\item
$\mSpec(T) = \{ \p_n T\}_{n \geq 1}$.
 \end{enumerate}
\end{proposition}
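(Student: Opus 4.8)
The plan is to reduce everything to the standard bijection between $\Spec(T)$ and the primes of $R$ that are disjoint from $S$, feeding in the countable prime avoidance of Lemma \ref{countable}. The preliminary facts I would record first are: for a prime $\p$ of $R$ one has $\p\cap S=\emptyset$ if and only if $\p\subseteq\bigcup_{n\geq 1}\p_n$; in that case $\p T=S^{-1}\p$ is a proper (hence prime) ideal of $T$ with $\p T\cap R=\p$; and conversely every prime of $T$ is of the form $\p T$ for the unique prime $\p=\p T\cap R$ of $R$, which is then disjoint from $S$. Since $R$ is Noetherian every prime $\p$ is finitely generated, so whenever $\p\subseteq\bigcup_{n\geq 1}\p_n$ Lemma \ref{countable} applies and yields $\p\subseteq\p_m$ for some $m$. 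I would also note at the outset that each $\p_i\subseteq\bigcup_n\p_n$, so $\p_i\cap S=\emptyset$ and hence $\p_iT$ is a (proper) prime of $T$ with $\p_iT\cap R=\p_i$.

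With this in place, the four assertions fall out in order. For (1): if $\p T$ is prime it is proper, so $\p\cap S=\emptyset$, i.e.\ $\p\subseteq\bigcup_n\p_n$, and Lemma \ref{countable} gives $\p\subseteq\p_m$. For (2): fixing $i\neq j$ and supposing $\p_iT\subseteq\p_jT$, I would contract to $R$ and use $\p_iT\cap R=\p_i$, $\p_jT\cap R=\p_j$ to get $\p_i\subseteq\p_j$, contradicting the hypothesis; in particular the $\p_nT$ are pairwise distinct, half of (3). For the maximality in (3): take any prime $\q$ of $T$ with $\p_nT\subseteq\q$, write $\q=\p T$ with $\p=\q\cap R$, apply (1) to get $\p\subseteq\p_m$ for some $m$, and contract $\p_nT\subseteq\p T$ to get $\p_n\subseteq\p\subseteq\p_m$; the hypothesis $\p_n\not\subseteq\p_m$ for $n\neq m$ forces $m=n$, hence $\p_n\subseteq\p\subseteq\p_n$, so $\p=\p_n$ and $\q=\p_nT$. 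Thus each $\p_nT$ is maximal. Finally (4): each $\p_nT$ is maximal by (3), and conversely any maximal ideal of $T$ is prime, hence equals $\p T$ with $\p\subseteq\p_m$ for some $m$ by (1), so it is contained in the proper ideal $\p_mT$ and by maximality equals $\p_mT$; therefore $\mSpec(T)=\{\p_nT\}_{n\geq 1}$.

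I do not expect a genuine obstacle here: once Lemma \ref{countable} is available this is a routine exercise in localization theory. The only places that need care are the appeal to Noetherianness so that primes are finitely generated (which is what makes the countable avoidance lemma applicable), and the bookkeeping of the contraction identities $\p T\cap R=\p$, valid precisely because the relevant primes miss $S$. It is the hypothesis $\p_i\not\subseteq\p_j$ for $i\neq j$ that does the essential work in parts (2) and (3).
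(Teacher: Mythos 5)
Your proof is correct and follows essentially the same route as the paper: countable prime avoidance (Lemma \ref{countable}) yields (1), contraction to $R$ gives (2), and (3) and (4) follow by combining these via the standard correspondence between primes of $T$ and primes of $R$ disjoint from $S$. The only difference is presentational—you spell out the localization bijection and contraction identities more explicitly than the paper does—but the argument is the same.
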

\begin{proof}
(1) We have $\p \cap S = \emptyset$. So $\p \subseteq  \bigcup_{n \geq 1} \p_n$. By \ref{countable} we get that $\p \subseteq \p_n$ for some $n$.

(2) If $\p_iT \subseteq \p_j T$ for some $i \neq j$ then intersecting with $R$ we get $\p_i \subseteq \p_j$ for some $i \neq j$; a contradiction.

(3) Let $\p_n T \subseteq  P$ for some prime $P$ of $T$. Say $P = \p T$ where $\p$ is a prime in $R$. By (1) we get that $\p T \subseteq \p_m T$ for some $m$. By (2) we get $m = n$. So $P = \p_n T$. Thus $\p_n T$ is a maximal ideal in $T$. That they are all distinct follows from (2).

(4) This follows from (1) and (3).
\end{proof}
We will need the following intersection result:
\begin{proposition}\label{prop-const-2}(with hypotheses as in \ref{const-K}). 
Let $\Lambda$ be any subset of $\{ \p_n \}_{n \geq 1}$ ( possibly infinite). Set 
$$ U = \bigcap_{\p \in \Lambda} \p \quad \text{and} \ V = \bigcap_{\p \in \Lambda} \p T. $$
Then $U T = V$.
\end{proposition}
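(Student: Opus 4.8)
The plan is to establish the two inclusions $UT \sub V$ and $V \sub UT$ separately. The first is purely formal: since $U = \bigcap_{\p \in \Lambda} \p \sub \p$ for every $\p \in \Lambda$, extending ideals along $R \rt T$ gives $UT \sub \p T$ for every such $\p$, hence $UT \sub \bigcap_{\p \in \Lambda}\p T = V$. This direction uses nothing special about $S$.

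For the reverse inclusion I would argue elementwise. Recall that for an ideal $\aF$ of $R$ one has $\aF T = S^{-1}\aF$, and an element $r/t$ of $T$ (with $r \in R$, $t \in S$) lies in $S^{-1}\aF$ if and only if there exists $w \in S$ with $wr \in \aF$; the nontrivial implication follows by clearing denominators in an equation $r/t = a/s$ with $a \in \aF$, $s \in S$. Now take $v \in V$ and write $v = r/t$. Fix $\p \in \Lambda$. Since $v \in \p T = S^{-1}\p$, there is some $w \in S$ with $wr \in \p$. But $\p$ is one of the primes $\p_n$ and $S = R \setminus \bigcup_{n \geq 1}\p_n$, so $w \notin \p$; as $\p$ is prime, $wr \in \p$ forces $r \in \p$. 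Since $\p \in \Lambda$ was arbitrary, $r \in \bigcap_{\p \in \Lambda}\p = U$, and therefore $v = r/t \in S^{-1}U = UT$. This gives $V \sub UT$, and together with the first paragraph, $UT = V$.

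I expect no serious obstacle here: the only ingredient beyond routine bookkeeping with localizations is the observation that every element of $S$ lies outside every member of $\Lambda$, combined with primeness of the $\p \in \Lambda$. In particular, unlike several other arguments in this section, the countable prime avoidance of Lemma \ref{countable} is not needed, and the argument is insensitive to whether $\Lambda$ is finite or infinite (the degenerate case $\Lambda = \emptyset$, where $U = R$ and $V = T$, is covered as well). The one point requiring a little care is the standard description of the extension of an ideal under localization used above, which I would either quote or dispatch with the one-line verification just indicated.
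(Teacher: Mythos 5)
Your argument is correct and follows essentially the same route as the paper's: both prove the reverse inclusion by fixing a representative of an element of $V$, clearing denominators inside each $\p T$ to obtain $wr \in \p$ for some $w \in S$, and then using that $S$ avoids every $\p_n$ together with primeness of $\p$ to conclude $r \in \p$. Your remarks that countable avoidance plays no role here and that $\Lambda = \emptyset$ is harmless are accurate, but the substance matches the paper's proof.
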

\begin{proof}
Clearly $U T \subseteq V$. Let $\xi = a/s \in V$. 
Let $\p \in \Lambda$. As $\xi \in \p T$ we get that 
$\xi = r/s_1$ where $r \in \p$. It follows that there exists $s' \in S$ such that $s's_1 a \in \p$. As $s's_1 \notin \p$ we get that $a \in \p$. Thus $a \in U$. Therefore $\xi \in U T$. Thus $V = UT$.
\end{proof}
\section{Proof of Theorem \ref{main}}
In this section we prove Theorem \ref{main}.

\begin{proof}[Proof of Theorem \ref{main}]
Suppose $\C^{i_0}_R$ is non-empty for some $i_0$. Let $(I,J)$ be a maximal element in $\C^{i_0}_R$. It follows from Proposition \ref{count-ass} we get that 
$\Ass_R (\image \theta^{i_0}_{I,J})$ is a countably infinite set. As $\dim R$ is finite we can choose an infinite subset 
$\{ \p_n \}_{n \geq 1}$ of $\Ass_R (\image \theta^{i_0}_{I,J})$ 
such that $\height \p_i = \height \p_j$ for all $i,j$. Clearly the primes $\{\p_n \}_{n\geq 1}$ are mutually incomparable. Set 
\[
S = R \setminus \bigcup_{n \geq 1}\p_n \quad \text{and} \ A = S^{-1}R.
\]
By \ref{prop-const} we get $\mSpec(A) = \{ \p_n A\}_{n \geq 1}$. Furthermore by construction $\height \p_n A  = \height \p_n$ is constant. Also by Lemma \ref{flat} we get that \\
$\Ass_A (\image \theta^{i_0}_{IA,JA}) \supseteq \mSpec(A)$. By \ref{pec} we also get that $(IA, JA)$ is a maximal element of $\C^{i_0}_A$.

Suppose if possible $g = \height IA  < i_0$. 

\textit{Claim 1:} $\Ass_A H^{i_0}_{IA}(A)$ is an infinite set.

Suppose if possible $\Ass_A H^{i_0}_{IA}(A) = \{ Q_1, Q_2, \ldots, Q_s \}$ is a finite set. Notice $\height Q_i \geq i_0 > g$ for all $i$. Note that $IA$ is a radical ideal. Let $IA = P_1\cap P_2 \cap \cdots \cap P_r$ for some primes $P_j$. Say $\height P_1 = g$. Choose $x_i \in Q_i \setminus P_1$. Set $x = x_1x_2\cdots x_s$. Then $x \in Q_i$ for all $i$. Also $x \notin P_1$. So $x \notin IA$.

We have an exact sequence
\[
\cdots \rt H^{i}_{IA + (x)}(A) \xrightarrow{\theta^{i}_{IA + (x), IA}} H^i_{IA}(A) \rt 
\left(H^i_{IA}(A)\right)_x  \rt \cdots
\]
As $x \in Q_i$ for all $i$ we get that $\left(H^{i_0}_{IA}(A)\right)_x = 0$. Thus
$\theta^{i_0}_{IA + (x), IA}$ is surjective. 
Notice
\[
 \theta^{i_0}_{IA + (x), JA} = \theta^{i_0}_{IA, JA} \circ \theta^{i_0}_{IA + (x), IA}.
\]
As $\theta^{i_0}_{IA + (x), IA}$ is surjective 
it follows that  $\Ass_A(\image \theta^{i_0}_{IA + (x), JA})$ is an infinite set. So $(IA + (x), JA) \in \C^{i_0}_A$. Also $x \notin IA$. This contradicts  the maximality of $(IA, JA)$ in $\C^{i_0}_A$.  Thus $\Ass_A H^{i_0}_{IA}(A)$ is an infinite set.

Now let $\Lambda$ be an \emph{infinite} subset of $\Ass_A (H^{i_0}_{IA}(A))$.
Set 
\[
W_{\Lambda} = \bigcap_{P \in \Lambda}P.
\]
Clearly $IA \subseteq W_{\Lambda}$.

\textit{Claim 2:} $IA = W_{\Lambda}$. \\
Suppose if possible there exist $x \in W_{\Lambda} \setminus IA$. We have an exact  sequence
\[
\cdots \rt H^{i_0}_{IA + (x)}(A) \xrightarrow{\theta^{i_0}_{IA + (x), IA}} H^{i_0}_{IA}(A) \xrightarrow{\pi_{i_0}}
\left(H^{i_0}_{IA}(A)\right)_x  \rt \cdots
\]
For $P \in \Lambda$ let
 $P = (0 \colon a_P)$ for some $a_P \in H^{i_0}_{IA}(A)$. Clearly $\pi_{i_0}(a_P) = 0$  for all $P \in \Lambda$. It follows that $\Lambda \subseteq \Ass_A(\image(\theta^{i_0}_{IA + (x), IA})$. Thus $(IA + (x), IA) \in \C^{i_0}_A$. Also $x \notin IA$. This contradicts  the maximality of $(IA, JA)$ in $\C^{i_0}_A$. Thus Claim 2 is true.
 
 Let 
 $$L_j = \{ P \mid P \in \Ass_A(H^{i_0}_{IA}(A)) \ \text{and} \ \height P = j \}.$$
 As $\dim A$ is finite we get by Claim 1 that $L_j$ is infinite for some $j$. Choose $j_0$ to be the maximum $j$ with $L_j$ infinite. Set
 \[
 \Lambda = L_{j_0} \setminus \{ \p \mid \p \ \text{a minimal prime of} \ IA \ \text{and} \ \height \p = j_0 \}.
 \]
 Clearly $\Lambda$ is an infinite set. Set 
 $$T = A \setminus \bigcup_{P \in \Lambda}P \quad \text{and} \ B = T^{-1}A. $$
 By \ref{prop-const} we get that 
 $$\mSpec(B) = \{ PB \mid P \in \Lambda \}.$$
 It follows from Claim-2 and \ref{prop-const-2} that
 \[
 IB = \bigcap_{P \in \Lambda}PB.
 \]
 Thus $IB$ is the Jacobson radical of $B$.  Note we also get that $\Ass_B H^{i_0}_{IB}(B)$ contains $\mSpec(B)$. 
 
 $R$ is excellent. So $A$ is excellent and hence $B$ is excellent. Therefore $\Reg(B/IB)$ is an open set. As $IB$ is a radical ideal we get that $\Reg(B/IB)$ is \emph{non-empty}.  Since $\rad(B/IB) = 0$ it follows that there exists a maximal ideal $\m$ of $B$ with $(B/IB)_\m$ a regular local ring.  As $B$ is a regular ring, $B_\m$ is a regular ring. Also note that by our construction $IB_\m \neq \m B_\m$. As $B_\m/IB_\m$ is regular we get that $IB_\m$ is generated by part of a regular system of parameters. Say $IB_\m = (x_1,\ldots, x_c)$ and $c < \dim B_\m$. We now note that
 \[
 H^j_{IB_\m}(B_\m) = 0 \ \text{for} \ j \neq c \quad \text{and} \ \Ass H^c_{IB_\m}(B_\m) = \{(x_1,\ldots,x_c) \}.
 \]
 It follows that $\m \notin \Ass_B H^{i_0}_{IB}(B)$; a contradiction. 
 Thus $\height IA = i_0$.
 
 Let $IA = P_1\cap P_2 \cap \cdots \cap P_s \cap Q_1 \cap Q_2 \cap \cdots \cap Q_l$,
 where $\height P_j = i_0$ for all $j$ and $\height Q_j \geq i_0 + 1$ for all $j$. Set
 $K = P_1 \cap \cdots \cap P_s$. It is well-known that the natural map
 \[
 \theta^{i_0}_{K, IA} \colon H^{i_0}_K(A) \rt H^{i_0}_{IA}(A)  \quad \text{is an isomorphism.}
 \]
 It follows that $(K, JA) \in \C^{i_0}_A$. By maximality of $(IA,JA)$ in  $\C^{i_0}_A$
 we get that $K = IA$.
 
 Now assume $\Ass_R (H^r_L(R))$ is a finite set for all $r < i_0$ and for all ideals $L$ of $R$. In particular $\Ass_R(H^{i_0 -1}_J(R))$ is a finite set. So
 $\Ass_A(H^{i_0 -1}_{JA}(A))$ is a finite set. 
 Suppose if possible $\height JA < i_0 - 1$. Let
 \[
 \Ass_A(H^{i_0 -1}_{JA}(A)) = \{ Q_1, Q_2, \cdots, Q_s \}.
 \]
 Notice $\height Q_j \geq i_0 -1$ for all $j$.
 Let $JA = P_1 \cap P_2 \cdots \cap P_r$ where $\height P_1 = \height JA < i_0 -1$.
 Choose $y_j \in Q_j \setminus P_1$. Also choose $x \in I \setminus P_1$. Set
 $t = y_1y_2\cdots y_s x$. Then $t \in Q_j$ for all $j$. We note that 
 $$\left(H^{i_0 -1}_{JA}(A)\right)_t = 0.$$
 Also note that as $t \notin P_1$ we get $t \notin JA$.
 
 Thus we have a commutative diagram
 \[
\xymatrixrowsep{3pc}
\xymatrixcolsep{2.5pc}
\xymatrix{
0  \ar@{->}[r]
  &   H^{i_0 }_{IA + (t)}(A) \ar@{->}[r]^\cong
     \ar@{->}[d]^{\theta^{i_0}_{IA, JA + (t) }}
&     H^{i_0 }_{IA}(A) \ar@{->}[d]^{\theta^{i_0}_{IA, JA  }}
            \ar@{->}[r]
&0
\\
0 \ar@{->}[r]
 & H^{i_0 }_{JA + (t)}(A) \ar@{->}[r]^{\theta^{i_0}_{JA + (t), JA }}
&  H^{i_0 }_{JA}(A)   \ar@{->}[r]^\pi
& \left(H^{i_0 }_{JA}(A) \right)_t
}
\]
We note that as $t \in I$, we get $t \in \rad(A)$. Let $\m$ be a maximal ideal in $A$ and let $(0 \colon a_\m) = \m$ where $a_\m \in \image \theta^{i_0}_{IA, JA  } $.  Clearly $\pi(a_\m) = 0$. Thus it follows that $\m \in \Ass_A (H^{i_0 }_{JA + (t)}(A))$. A simple diagram chase shows that in-fact $\m \in \Ass_A ( \image \theta^{i_0}_{IA, JA + (t) })$. Thus
$(IA , JA + (t)) \in \C^{i_0}_A$.  This contradicts  the maximality of $(IA, JA)$ in $\C^{i_0}_A$.  Therefore $\height JA = i_0 - 1$.
\end{proof}

As an immediate consequence we get
\begin{proof}[Proof of Corollary \ref{cor-main}]
$(i) \implies (ii)$. Obvious.

$(ii) \implies (i)$. By Theorem \ref{main} it follows that Lyubeznik's conjecture holds for all regular domains of dimension $ \leq d$ containing an uncountable  field 
of characteristic zero. By results in section \ref{flat-s} it follows that Lyubeznik's conjecture holds for all regular domains of dimension $ \leq d$ containing a field 
of characteristic zero. By \ref{red-domain} the result holds for all regular rings of dimension $ \leq d$ containing a  field 
of characteristic zero.
\end{proof}

\section{Proof of Theorem \ref{main-T}}
In this section we prove Theorem \ref{main-T}. We will need the following result (see \cite[Proof of Theorem 31.1]{Mat}).
\begin{lemma}\label{int}
Let $\{ Q_n \}_{n \geq 1}$ be an infinite family of primes in a Noetherian ring $T$. Let $P$ be another prime ideal in $T$ with $P \subseteq Q_n$ for all $n$. Suppose $\height(Q_n/P) = 1$ for all $n$. Then
\[
\bigcap_{n\geq 1} Q_n = P.
\]
\end{lemma}
We also need Corollary \ref{local}. As the techniques to prove it are totally different we postpone the proof of Corollary \ref{local} to the next section. We now give:
\begin{proof}[Proof of Theorem \ref{main-T}]
By \ref{red-domain} we may assume that $R$ is a domain. By results in section \ref{flat-s}, we may assume that $R$ contains an uncountable field of characteristic zero. 

Suppose if possible for some ideal $K$ of $R$ we have $\Ass_R (H^{d-1}_K(R))$ is an infinite set. So $(K,K) \in \C^{d-1}_R$. Thus $\C^{d-1}_R \neq \emptyset$. Let $(I,J)$
be a maximal element in $\C^{d-1}_R$. We now do the construction as in Theorem \ref{main}.  Then in the ring $A = S^{-1}R$ we have 
\begin{enumerate}[\rm(a)]
\item
$(IA, JA)$ is a maximal element in $\C^{d-1}_ A$.
\item
$\height IA = d-1$.
\item
$IA = P_1\cap P_2 \cap \cdots \cap P_r$ where $P_i$ is a prime in $A$ of height $d-1$.
\item
$\Ass_A (\image \theta^{d-1}_{IA, JA}) \supseteq \mSpec(A)$.
\item
$\mSpec(A)$ is a countably infinite set.
\end{enumerate}
We now note that $\dim A = d$. This is so since for any Noetherian ring $T$ of dimension $n$ and an ideal  $L$ of $T$ we have $\Ass_T (H^n_L(T))$ is a finite set.  Thus $\dim A \neq d -1$. Furthermore by Grothendieck vanishing theorem it is not possible that $\dim A < d -1$.  Thus  $\dim A = d$. Again by Theorem \ref{main} we get  
 that $\height \m = d$ for all maximal ideals of $A$. We note that $IA \subseteq \rad(A)$. 

\textit{Claim 1:} There exists a localization $B$ of $A$ such that
\begin{enumerate}
\item
$\dim B = d$.
\item
$\height \m = d$ for all maximal ideals $\m$ of $B$.
\item
$\mSpec(B)$ is a countably infinite set.
\item
$\Ass_B (\image \theta^{d-1}_{IB, JB}) \supseteq \mSpec(B)$.
\item
$IB$ is a prime ideal of height $d-1$.
\item
$IB = \rad(B)$.
\end{enumerate}
To prove the claim we recall that $IA = P_1\cap P_2\cap \cdots \cap P_r$ where $P_i$ is a prime ideal in $A$ of height $d-1$.
We consider two cases.

Case 1: $r = 1$. \\
Then $IA = P_1$ is a prime ideal of height $d-1$. Also $P_1 \subseteq \m$ for all maximal ideals $\m$ of $A$. As $\height \m = d$ for all maximal ideals of $A$ and as $\mSpec(A)$ is a countably infinite set, by Lemma \ref{int} we get $IA = P_1 = \rad(A)$. Thus we can take $B$ to be $A$.

Case 2: $r \geq 2$. \\
Consider the sets
\[
Y_i = \{ \m \mid \m \in \mSpec(A) \ \text{and} \ \m \supseteq P_i \}.
\]
As $IA \subseteq \rad(A)$ we get $Y_1 \cup Y_2 \cup \cdots \cup Y_r = \mSpec(A)$. So there exists $i$ such that $Y_{i}$  is an infinite set. After relabeling we may assume $i = 1$.
Set 
\[
T_1 = A \setminus \bigcup_{\m \in Y_1} \m \quad \text{and} \quad A_1 = T_1^{-1}A.
\]
Let $IA_1 = Q_1\cap Q_2\cap \cdots \cap Q_s$ be an \textit{irredundant} primary decomposition of $IA_1$ with $Q_1 = P_1A_1$. (Note as $IA_1$ is a radical ideal all  $Q_i$ are prime ideals.  We note that $\height Q_1 = d-1$. Furthermore 
$\height \n = d$ for each maximal ideal of $A_1$.  Also by Proposition \ref{prop-const}, $\sharp \mSpec(A_1) = \sharp Y_1 =  \infty$. As $Q_1 \subseteq \m$ for each maximal ideal of $A_1$, by Lemma \ref{int} we get 
$Q_1 = \rad(A_1)$. 

We also note that 
$\Ass_{A_1} (\image \theta^{d-1}_{IA_1, JA_1}) \supseteq \mSpec(A_1)$ and $IA_1 \subseteq \rad(A_1)$. For $j \geq 2$, consider the sets
\[
Y_j^\prime = \{ \m \mid \m \in \mSpec(A_1) \ \text{and} \ \m \supseteq Q_j \}.
\]
\textit{Claim 2:} 
$Y_j^\prime$ is a finite set for all $j \geq 2$. \\
Suppose if possible $Y_j$ is an infinite set for some $j$. Then by Lemma \ref{int} we get 
\[
Q_j = \bigcap_{\m \in Y_j^\prime}\m \supseteq \rad(A_1) = Q_1
\]
This contradicts the fact that $Q_1\cap \cdots \cap Q_s$ is an \emph{irredundant}
primary decomposition of $IA_1$. Thus Claim 2 is proved.

Now set 
$$\Lambda = \mSpec(A_1) \setminus \bigcup_{j \geq 2} Y_j^\prime, \quad T_2 = A_1\setminus \bigcup_{\m \in \Lambda} \m \quad \text{and} \quad B = T_2^{-1}A_1.  
$$
It is easy to prove that $B$ satisfies all the assertions in Claim 1.

 We now note that $B$ is excellent. So $\Reg(B/IB)$ is  an open set. As $IB$ is a prime ideal we get that $\Reg(B/IB)$ is \emph{non-empty}. Since $\rad(B/IB) = 0$ it follows that there exists a maximal ideal $\m$ of $B$ with $(B/IB)_\m$ a regular local ring.  As $B$ is a regular ring, $B_\m$ is a regular ring. Also note that  $\height IB_\m  = d -1$ and $\height \m B_\m = d$. As $B_\m/IB_\m$ is regular we get that $IB_\m$ is generated by part of a regular system of parameters. Say $IB_\m = (x_1,\ldots, x_{d-1})$ and say $\m B_\m = (x_1,\cdots, x_{d-1},x_d)$. Let $k$ be the residue field of $B_\m$. Set $C = \widehat{B_\m}$. Note $C = k[[x_1,\cdots,x_d]]$ and $IC = (x_1,\ldots,x_{d-1})$.  Let $\n = (x_1,\cdots, x_d)C$ be the maximal ideal of $C$.
 By our assumption $\n \in \Ass_C (\image \theta^{d-1}_{IC, JC}) $.
However this contradicts Corollary \ref{local}. 
Thus $\Ass_R (H^{d-1}_K(R))$ is a finite set for all ideals $K$ of $R$. 
\end{proof}

\section{Simplicity of a local cohomology module}
The reference for this section is \cite{B} and \cite{Lyu-1}.
Let $\Oc = K[[X_1,\cdots, X_n]]$ where $K$ is a field of characteristic zero. Let $D$ be the ring of $K$-linear differential operators on $D$.  By the work of Lyubeznik it is known that if $I$ is an ideal in $\Oc$ then $H^i_I(\Oc)$ are finitely generated $D$-module for all $i \geq 0$. The main goal of this section is to prove that a certain local cohomology  $\Oc$-module is simple as 
a $D$-module.

\s Consider the $\sum$ filtration 
where
$\sum_k = \{ Q \in D \mid Q = \sum_{|\alpha| \leq k} q_{\alpha}(\bX)\partial^\alpha \}$, 
is the set of differential operators of order $\leq k$. 
The associated graded ring $\gr D = \sum_0 \oplus \sum_1/\sum_0 \oplus \cdots $ is isomorphic to the polynomial ring $\Oc[\zeta_1,\cdots, \zeta_n]$ where $\zeta_i$ is the image of $\partial_i$ in $\sum_1/\sum_0$. 

\s Let $M$ be a finitely generated $D$-module. We consider  filtrations $\Fc$
of $K$-linear subspaces of $M$ with the property that $\Fc_i = 0$ for $i < 0$, $\Fc_i \subseteq \Fc_{i+1}$, $M = \bigcup_{i \geq 0} \Fc_i$ and $\sum_i \Fc_j \subseteq \Fc_{i+j}$. Note that $\gr_{\Fc} M = \bigoplus_{i\geq 0} \Fc_{i}/\Fc_{i-1}$ is a $\gr D$-module. We say $\Fc$ is a \textit{good filtration} of $M$ if $\gr_\Fc M$ is finitely generated as a $\gr D$-module. We note that every finitely generated $D$-module $M$ has a good filtration.

\s Let $\Mr$ be the unique graded maximal ideal of $\gr D$. Let $\Fc$ be a good filtration on $M$ then note that
$\dim \gr_\Fc M  = \dim (\gr_\Fc M)_\Mr$, see \cite[1.5.8]{BH}. Let $e(\gr_\Fc M )$ be the multiplicity of $(\gr_\Fc M)_\Mr$ with respect to the maximal ideal $\Mr (\gr D)_\Mr$ of the regular local ring $(\gr D)_\Mr$.  Let $\Fc, \Gc$ be two good filtrations on $M$. Then by \cite[Lemma 6.2, Chapter 2]{B} we get that
\[
\dim \gr_\Fc M = \dim \gr_\Gc M \quad \text{and} \quad e(\gr_\Fc M ) = e(\gr_\Gc M). 
\]
Thus if $\Fc$ is a good filtration on $M$ then we can set
\[
\dim M  = \dim \gr_\Fc M \quad \text{and} \quad e_\Mr(M) = e(\gr_\Fc M ). 
\]
\s It is well-known that if $M$ is a non-zero $D$-module then $\dim M \geq n$. If $\dim M = n$ or if $M = 0$ then we say $M$ is a \textit{holonomic} $D$-module. By the work of Lyubeznik it is known that if $I$ is an ideal in $\Oc$ then $H^i_I(\Oc)$ is a holonomic $D$-module for all $i \geq 0$.

We need the following Lemma and its corollary:
\begin{lemma}
Let $M$ be a holonomic $D$-module. If $M \neq 0$ and $M$ is not simple then 
$e_\Mr(M) \geq 2$.
\end{lemma}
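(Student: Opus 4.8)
The plan is to derive the inequality from additivity of Hilbert--Samuel multiplicity along a short exact sequence of $D$-modules, together with the fact that a nonzero finitely generated module over a local ring has multiplicity at least $1$ with respect to its maximal ideal.

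Since $M$ is not simple, I would choose a nonzero proper $D$-submodule $N$ of $M$. Because $D$ is left Noetherian (its associated graded ring $\gr D \cong \Oc[\zeta_1,\dots,\zeta_n]$ is a polynomial ring over a Noetherian ring, hence Noetherian, so $D$ itself is Noetherian), the modules $N$ and $M/N$ are again finitely generated $D$-modules, fitting in a short exact sequence $0 \rt N \rt M \rt M/N \rt 0$. Next I would fix a good filtration $\Fc$ on $M$, give $N$ the induced filtration with $i$-th term $\Fc_i \cap N$ and $M/N$ the image filtration $\ov{\Fc}$; a standard argument shows that both are good filtrations and that the induced sequence of graded $\gr D$-modules
\[
0 \rt \gr_{\Fc} N \rt \gr_{\Fc} M \rt \gr_{\ov{\Fc}}(M/N) \rt 0
\]
is exact. (Here I use, as recalled above, that $\dim$ and $e_\Mr$ do not depend on the chosen good filtration.)

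I would then check that $N$ and $M/N$ are holonomic. Since $\gr_{\Fc} N$ is a graded $\gr D$-submodule of $\gr_{\Fc} M$ we have $\dim N \le \dim M = n$, and Bernstein's inequality gives $\dim N \ge n$; hence $\dim N = n$, and likewise $\dim(M/N) = n$, so both are nonzero holonomic $D$-modules. Localizing the displayed exact sequence at the graded maximal ideal $\Mr$ yields a short exact sequence of finitely generated modules over the regular local ring $(\gr D)_\Mr$, all three of dimension $n$; the additivity formula for multiplicities then gives
\[
e_\Mr(M) = e_\Mr(N) + e_\Mr(M/N).
\]
Since the multiplicity of a nonzero finitely generated module over a local ring with respect to the maximal ideal is a positive integer, each summand is at least $1$, whence $e_\Mr(M) \ge 2$.

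The goodness of the induced filtrations and the exactness of the associated graded sequence are routine, as is positivity of multiplicity for nonzero modules. The step that requires genuine care is the additivity formula: it holds only because all three modules in the short exact sequence have the \emph{same} dimension $n$ over $(\gr D)_\Mr$, and this equidimensionality is exactly where holonomicity of the submodule and quotient — via Bernstein's inequality — is essential; if the dimensions differed, the formula would acquire correction terms and the argument would collapse. So the main obstacle is really bookkeeping: ensuring one is entitled to apply additivity, i.e. that $N$ and $M/N$ are nonzero and holonomic.
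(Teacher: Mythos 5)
Your proof is correct and is essentially the same argument as in the paper: take a short exact sequence coming from a proper nonzero submodule, pass to induced/quotient good filtrations, observe that all three graded modules have dimension $n$, and apply additivity of multiplicity. The only difference is cosmetic --- you spell out why the submodule and quotient are holonomic (via Bernstein's inequality) and why additivity applies, whereas the paper states these facts without elaboration.
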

\begin{proof}
As $M$ is not simple it has a proper non-zero submodule $K$. Set $C = M/K$. Then $K, C$ are \emph{non-zero} holonomic $D$-modules.

Let $\Fc$ be a good filtration on $M$. Set $\ov{\Fc} = $ quotient filtration on $C$ and let $\Gc = \{ \Fc_n \cap K \}_{n \geq 0}$ be the induced filtration on $K$. Then we have an exact sequence of graded $\gr D$-modules,
\begin{equation*}
0 \rt \gr_\Gc K \rt \gr_\Fc M \rt \gr_{\ov{\Fc}} C \rt 0. \tag{*}
\end{equation*}
Thus $\gr_\Gc K$ and $\gr_{\ov{\Fc}} C$ are finitely generated $\gr D$ modules. So
 $\ov{\Fc}$ is a good filtration of $C$ and  $\Gc$ is a good filtration of $K$.
 
 All the modules in equation (*) have dimension $n$. Computing multiplicites we get
 \[
 e_\Mr(M) =   e_\Mr(K) +  e_\Mr(C) \geq 2.
 \]
\end{proof}
As an immediate corollary we obtain
\begin{corollary}\label{m-1}
Let $M$ be a holonomic $D$-module. If $e_\Mr(M) = 1$ then $M$ is a simple $D$-module.
\end{corollary}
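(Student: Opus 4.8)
The plan is to obtain Corollary \ref{m-1} as the immediate contrapositive of the preceding Lemma, so the argument is very short. First I would record that the numerical hypothesis $e_\Mr(M) = 1$ already forces $M \neq 0$: the zero module carries the trivial good filtration with $\gr_\Fc M = 0$, hence has multiplicity $0$, so a module of multiplicity $1$ cannot be zero. Thus $M$ is a \emph{non-zero} holonomic $D$-module, and the word ``simple'' is being used in its usual sense (a non-zero module with no proper non-zero $D$-submodule).

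Next I would argue by contradiction. Suppose $M$ is not simple. Since $M \neq 0$, this means $M$ has a proper non-zero $D$-submodule, so the hypotheses of the Lemma above are satisfied; applying it yields $e_\Mr(M) \geq 2$. This contradicts $e_\Mr(M) = 1$. Hence $M$ is simple, which is the assertion of the Corollary.

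I do not expect any genuine obstacle: all the substance sits in the Lemma, whose proof uses only that for a proper non-zero submodule $K \subseteq M$ the associated graded sequence $(*)$ is a short exact sequence of finitely generated $\gr D$-modules, that $K$ and $M/K$ remain holonomic (dimension $n$), and that multiplicity is additive on such a short exact sequence of modules all of the same dimension, giving $e_\Mr(M) = e_\Mr(K) + e_\Mr(M/K) \geq 1 + 1$. The single point meriting a sentence of care in the Corollary itself is the degenerate case $M = 0$, which is excluded precisely by the hypothesis $e_\Mr(M) = 1$.
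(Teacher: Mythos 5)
Your proof is correct and follows exactly the route the paper intends: the Corollary is the contrapositive of the preceding Lemma, with a brief remark that $e_\Mr(M)=1$ rules out $M=0$. The paper presents this as an immediate corollary with no further argument, so your treatment matches it.
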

The main result of this section is:
\begin{lemma}\label{m-loc}
Let $\Oc = K[[X_1,\ldots, X_n]]$ and let $P_g = (X_1,\ldots, X_g)$ for $g \geq 1$. Then $e_\Mr(H^g_{P_g}(\Oc)) = 1$. Thus
$H^g_{P_g}(\Oc)$ is a simple $D$-module.
\end{lemma}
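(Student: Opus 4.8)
The plan is to compute $M:=H^g_{P_g}(\Oc)$ explicitly, equip it with a convenient good filtration, and show that $\gr_\Fc M$ is $\gr D$ modulo a height-$n$ prime with regular quotient; this forces $e_\Mr(M)=1$, and Corollary \ref{m-1} then yields simplicity.

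Since $X_1,\ldots,X_g$ is a regular sequence, $M$ is the top \v{C}ech cohomology of $\Oc$ with respect to $X_1,\ldots,X_g$, and a $K$-basis of $M$ is given by the inverse monomials $X_1^{-a_1}\cdots X_g^{-a_g}X_{g+1}^{b_{g+1}}\cdots X_n^{b_n}$ with $a_i\ge1$ $(i\le g)$ and $b_j\ge0$ $(j>g)$; on these, $\Oc$ and the $\partial_i$ act in the evident way (for $i\le g$, $X_i$ lowers $a_i$ by one and kills the monomial when $a_i=1$, while $\partial_i$ raises $a_i$ by one up to a nonzero scalar; for $j>g$, $X_j$ and $\partial_j$ act as on ordinary monomials, so in particular $\partial_j\lambda=0$). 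Hence $M=D\lambda$ for $\lambda:=X_1^{-1}\cdots X_g^{-1}$: applying suitable powers $\partial_i^{a_i-1}$ $(i\le g)$ and then multiplying by a monomial in $X_{g+1},\ldots,X_n$ reaches every basis vector up to a nonzero scalar.

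Next I would put on $M$ the good filtration induced by $\lambda$: $\Fc_k:=\{Q\lambda\mid Q\in\sum_k\}$. This is a good filtration because $\gr_\Fc M$ is a homomorphic image of the Noetherian ring $\gr D$ via $\overline{Q}\mapsto\overline{Q\lambda}$. Using $\partial_j\lambda=0$ for $j>g$ together with the rule $X_i^{\beta}\cdot X_i^{-1-\alpha}=X_i^{\beta-1-\alpha}$ (read as $0$ when $\beta>\alpha$), a direct computation gives
\[
\Fc_k=\operatorname{span}_K\Big\{X_1^{-m_1}\cdots X_g^{-m_g}X_{g+1}^{b_{g+1}}\cdots X_n^{b_n}\ \Big|\ m_i\ge1,\ \textstyle\sum_{i\le g}m_i\le g+k,\ b_j\ge0\text{ arbitrary}\Big\}.
\]
From this one reads off the structure of $\gr_\Fc M$ as a module over $\gr D=\Oc[\zeta_1,\ldots,\zeta_n]$: multiplication by $X_1,\ldots,X_g$ strictly decreases $\sum_{i\le g}m_i$, and for $j>g$ the operator $\partial_j$ preserves $\sum_{i\le g}m_i$ (it only lowers $b_j$) and hence does not raise filtration degree; thus $X_1,\ldots,X_g$ and the symbols $\zeta_{g+1},\ldots,\zeta_n$ act as $0$ on $\gr_\Fc M$. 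Consequently $\gr_\Fc M$ is a module over $\gr D/\mathfrak q$ with $\mathfrak q:=(X_1,\ldots,X_g,\zeta_{g+1},\ldots,\zeta_n)$, and I would check that the graded $\gr D$-linear map
\[
\Phi\colon \gr D/\mathfrak q\longrightarrow\gr_\Fc M,\qquad \overline{f\,\zeta_1^{\gamma_1}\cdots\zeta_g^{\gamma_g}}\longmapsto \overline{f\,\partial_1^{\gamma_1}\cdots\partial_g^{\gamma_g}\lambda}\quad(f\in K[[X_{g+1},\ldots,X_n]]),
\]
is an isomorphism: well-definedness and $\gr D$-linearity are immediate because $f$ and $\lambda$ involve disjoint sets of variables; $\Phi$ is onto since $\Phi(\overline 1)=\overline\lambda$ generates; and $\Phi$ is injective because it sends the $K$-monomial basis of $\gr D/\mathfrak q$ to nonzero scalar multiples of distinct basis vectors of $M$, the monomial $f\zeta^\gamma$ landing in the graded piece $\Fc_{|\gamma|}/\Fc_{|\gamma|-1}$.

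Finally, $\gr D/\mathfrak q\cong K[[X_{g+1},\ldots,X_n]][\zeta_1,\ldots,\zeta_g]$ is a regular ring of Krull dimension $n$, so $\mathfrak q$ is prime of height $n$ and $(\gr D/\mathfrak q)_\Mr$ is a regular local ring of dimension $n$. Therefore $\dim\gr_\Fc M=n$ (reconfirming that $M$ is holonomic) and
\[
e_\Mr(M)=e\big(\Mr(\gr D)_\Mr;\,(\gr D/\mathfrak q)_\Mr\big)=1,
\]
the multiplicity of a regular local ring. By Corollary \ref{m-1}, $H^g_{P_g}(\Oc)$ is a simple $D$-module. The one genuinely delicate step is the bookkeeping above — in particular verifying that $X_i$ $(i\le g)$ and $\zeta_j$ $(j>g)$ annihilate $\gr_\Fc M$ and that $\Phi$ is injective; everything else is formal.
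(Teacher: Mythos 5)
Your proposal is correct and takes essentially the same approach as the paper: both endow $H^g_{P_g}(\Oc)$ with the cyclic good filtration generated by $\lambda$ (the paper presents it as the tensor-product filtration on $D'/D'\n\otimes_S\Oc$, you via the \v{C}ech/inverse-monomial basis) and identify $\gr M$ with $\Oc/(X_1,\ldots,X_g)[\zeta_1,\ldots,\zeta_g]$, whence multiplicity one and simplicity via Corollary \ref{m-1}. The only cosmetic difference is the injectivity step: the paper invokes holonomicity (dimension $n$) and the domain property of the target ring to upgrade the surjection $\ov\phi$ to an isomorphism, while you verify injectivity degree-by-degree directly; your phrasing ``$K$-monomial basis'' for the power-series ring is slightly loose but the graded, $K[[X_{g+1},\ldots,X_n]]$-linear-independence argument you sketch is sound.
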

\begin{proof}
We first consider the case when $g = n$. So $P_n = \m = (X_1,\ldots, X_n)$. In this case it is well-known that 
$H^n_\m(\Oc) \cong D/D\m = K[\ov{\partial_1},\ldots, \ov{\partial_n}]$ as $D$-modules. Let $\Fc_i = \{ \sum_{|\alpha| \leq i}a_\alpha \ov{\partial}^\alpha \mid a_\alpha \in K \}$.  Let $Q \in \sum_i$ be a differential operator of order $\leq i$. Then notice $Q$ can also be written as $Q = \sum_{|\alpha| \leq i}\partial^\alpha a_\alpha$.  Set $a_\alpha = c_\alpha + t_\alpha$ where $a_\alpha \in K$ and $t_\alpha \in \m$. Then notice that the image of $Q$ in $D/D\m$ is  $= \sum_{|\alpha| \leq i} c_\alpha\ov{\partial}^\alpha$.
Thus $\Fc$ is the quotient filtration of $\sum$. 
 Therefore $\Fc$ is a good filtration on $D/D\m$. Also note that
$$\gr_\Fc D/D\m =  \gr D / \m \gr D = K[\zeta_1,\ldots,\zeta_n].$$
Clearly $e_\Mr(D/D\m) = 1$. Also note that $X_i\Fc_{\nu} \subseteq \Fc_{\nu-1}$ for all $\nu \geq 0$.

We now consider the case when $g < n$. Set $S = K[[X_1,\ldots,X_g]]$ and $\n = (X_1,\ldots,X_g)$. Set $D^\prime$ be the ring of $K$-linear differential operators on $S$. Let $M = H^g_\n(S) = D^\prime/D^\prime \n$. Set
$N = H^g_{P_g}(\Oc) = M\otimes_S \Oc$. We note that that $D$-module structure on $N$ is given by  $\partial_1,\ldots,\partial_g$ acting on $M$ and $\partial_i$ acting on $\Oc$ for $i > g$. Also note that for $r \in  \Oc$, $m \in M$ and $t \in \Oc$ we have
$r\cdot(m\otimes t) = m\otimes rt$. Let $\Fc$ be the $D^\prime$ filtration on $M$ as discussed earlier.
Set
\[
\Omega_\nu = \{ \sum_{\text{finite sum}} m_\alpha\otimes r_\alpha \mid m_\alpha \in \Fc_\nu \ \text{and} \ r_\alpha \in \Oc \}. 
\]
Let $\xi = a\partial_1^{a_1}\cdots\partial_g^{a_g}\cdots \partial_n^{a_n} \in \sum_i$. So $\sum_{k=1}^{n}a_k \leq i$. 
In particular $\sum_{k = 1}^{g}a_k \leq i$. Let $m\otimes r \in \Omega_\nu $.
 Then
\[
\xi\cdot(m\otimes r) = \left(  (\partial_1^{a_1} \cdots \partial_g^{a_g} m \right) \otimes \left( a(\partial_{g+1}^{a_{g+1}} \cdots \partial_n^{a_n}r) \right) \in \Omega_{\nu + i}.
\]
Thus $\Omega$ is a filtration on $N$ compatible with the $\sum$-filtration on $D$. Therefore $\gr_\Omega N$ is a $\gr D$-module.

We first assert that $\gr_\Omega N$ is generated by $\ov{1\otimes 1} \in \Omega_0$. Let $\xi = m\otimes r \in \Omega_\nu$. Then note that if $m = \sum_{\alpha}a_\alpha \ov{\partial}^\alpha$ then $m = Q\cdot 1$ where $Q = \sum_{\alpha}a_\alpha {\partial}^\alpha$. Thus $\xi = rQ\cdot(1\otimes 1)$. Therefore we have an obvious surjective  map $\phi \colon \gr D \rt \gr_\Omega N$ which takes $1$ to $\ov{1\otimes 1}$. Thus $\Omega$ is a good filtration on $N$. 

 Let $\xi = [m\otimes r] \in \Omega_\nu/\Omega_{\nu-1}$. For $i \leq g$ we have $X_i m \in \Fc_{\nu-1}$. Thus 
 $X_i (m\otimes r) = m\otimes X_ir = X_im\otimes r \in \Omega_{\nu-1}$. Therefore $X_i\xi =0$ for all $i \leq g$. 
Also notice that for $i > g$ we have $\zeta_i\xi = [m\otimes \partial_i(r)]$. But $m\otimes \partial_i r \in \Omega_\nu$. As $\degree \zeta_i = 1$ we get that $\zeta_i \xi = 0$.
Thus $\phi$ factors to a surjective map $\ov{\phi} \colon \Oc/(X_1,\ldots,X_g)[\zeta_1,\ldots,\zeta_g] \rt \gr_\Omega N$.

 As $\gr_\Omega N$ has dimension $n$ it follows that $\ov{\phi}$ is in fact an isomorphism. Thus $\gr_\Omega N  \cong \Oc/(X_1,\ldots,X_g)[\zeta_1,\ldots,\zeta_g]$ and clearly it has multiplicity one.
\end{proof}

We need the following result in the proof of our main result.
\begin{corollary}\label{local}
Let $\Oc = K[[X_1,\ldots,X_n]]$ and let $P = (X_1,\ldots, X_g)$ where $g < n$. Set $\m = (X_1,\ldots,X_n)$. Let $J \subseteq P$.
Let $\theta^g \colon H^g_P(\Oc) \rt H^g_J(\Oc)$ be the natural map. Then $\Ass \image \theta^g$ is either empty or equal to $\{ P\}$. In particular $\m \notin \Ass \image \theta^g$.
\end{corollary}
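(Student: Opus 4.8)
The plan is to combine the simplicity of $H^g_{P}(\Oc)$ as a $D$-module, proved in Lemma \ref{m-loc}, with the $D$-linearity of the natural map $\theta^g$. First I would recall that, by the work of Lyubeznik (\cite{Lyu-1}, see also \cite{B}), the modules $H^i_{L}(\Oc)$ are objects of the category of holonomic $D$-modules and, for $L \supseteq L'$, the natural maps $H^i_{L}(\Oc) \rt H^i_{L'}(\Oc)$ are morphisms in this category; in particular $\theta^g$ is $D$-linear, so $\image \theta^g$ is a $D$-submodule of $H^g_{J}(\Oc)$.

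Since $g < n$, Lemma \ref{m-loc} gives that $H^g_{P}(\Oc)$ is a \emph{simple} $D$-module. The kernel of a $D$-linear map out of a simple $D$-module is $0$ or everything, so $\theta^g$ is either zero or injective. If $\theta^g = 0$, then $\image \theta^g = 0$ and $\Ass \image \theta^g = \emptyset$. If $\theta^g$ is injective, it is in particular an isomorphism of the underlying $\Oc$-modules onto its image, so $\Ass_{\Oc} \image \theta^g = \Ass_{\Oc} H^g_{P}(\Oc)$.

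It then remains to show $\Ass_{\Oc} H^g_{P}(\Oc) = \{P\}$. For this I would use the direct-limit description $H^g_{P}(\Oc) = \varinjlim_{t} \Oc/(X_1^t,\ldots,X_g^t)$, with transition maps given by multiplication by $X_1\cdots X_g$ (top Koszul cohomology of the regular sequence $X_1,\ldots,X_g$). Each $\Oc/(X_1^t,\ldots,X_g^t)$ is a complete intersection, hence Cohen-Macaulay of dimension $n-g$, and its radical is $P$; so $\Ass_{\Oc}(\Oc/(X_1^t,\ldots,X_g^t)) = \Min(\Oc/(X_1^t,\ldots,X_g^t)) = \{P\}$. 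Since the associated primes of a direct limit are contained in the union of those of the terms, $\Ass_{\Oc} H^g_{P}(\Oc) \subseteq \{P\}$, and equality holds because $H^g_{P}(\Oc) \neq 0$. Putting the pieces together, $\Ass \image \theta^g$ is $\emptyset$ or $\{P\}$; and since $g<n$ forces $P \subsetneq \m$, in either case $\m \notin \Ass \image \theta^g$.

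The step I expect to be the main obstacle is the first one: making precise that the map $\theta^g$ built from an injective resolution in the setup of \ref{setup} coincides with the one that lives in Lyubeznik's category of $D$-modules. The cleanest route is to bypass this by realizing $\theta^g$ concretely: choose generators with $P = (X_1,\ldots,X_g)$ and $J$ generated by a tuple of elements of $P$, compute $H^g_{P}(\Oc)$ and $H^g_{J}(\Oc)$ by the corresponding \v{C}ech complexes of localizations of $\Oc$, and observe that the comparison map is induced by $\Oc$-algebra localization maps, each of which commutes with every $\partial_i$. This makes the $D$-linearity of $\theta^g$ transparent and closes the argument.
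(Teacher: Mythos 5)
Your proposal is correct and follows essentially the same route as the paper: use Lemma~\ref{m-loc} for the simplicity of $H^g_P(\Oc)$ as a $D$-module, observe that $\theta^g$ is $D$-linear so that its image is $0$ or a copy of $H^g_P(\Oc)$, and conclude from $\Ass H^g_P(\Oc)=\{P\}$. The only difference is in presentation: the paper establishes $D$-linearity by computing $\theta^g$ from a $D$-module injective resolution of $\Oc$ (which is automatically $\Oc$-injective), whereas you offer the \v{C}ech-complex realization; both are standard and you correctly identify this as the point requiring care. You also spell out $\Ass H^g_P(\Oc)=\{P\}$ via the Koszul direct limit, which the paper leaves implicit in ``the result follows.''
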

\begin{proof}
We note that any injective $D$-module is also an injective $\Oc$-module. Thus we can take an injective resolution of $\Oc$ as a $D$-module and note that computing $\theta^i$ with this resolution we get that $\theta^i \colon H^i_P(\Oc) \rt H^i_J(\Oc)$ is $D$-linear. By \ref{m-loc} we get that $N = H^g_P(\Oc)$ is a simple $D$-module. Thus $\image \theta^g  \cong N$ or it is zero. The result follows. 
\end{proof}

\section{Small dimensions}
In this section we prove several elementary results regarding local cohomology modules of regular rings of dimension $\leq 4$. 
We note that the results for dimension $\leq 3$ are probably well-known to the experts. However as we are unable to find a reference we give a proof for these cases too. 
 
We first prove the following general result.
\begin{proposition}\label{HL}
Let $R$ be a regular ring of dimension $d$. Let $I$ be an ideal in $R$. Then
$$\Ass_R (H^d_I(R)) = \left\{ P  \left|  P \in \Min R/I \ \text{and} \ \height P = d   \right.   \right \}.  $$ 
\end{proposition}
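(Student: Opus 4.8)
The plan is to prove the two inclusions separately, using the Hartshorne--Lichtenbaum vanishing theorem as the main tool. First I would reduce to the case where $R$ is a regular domain: by Remark \ref{red-domain} we may write $R \cong R_1 \times \cdots \times R_n$ with each $R_i$ a regular domain of dimension $\leq d$, and by Proposition \ref{prod-loc} and Proposition \ref{ver} both sides of the claimed equality decompose compatibly over this product (noting that a height-$d$ prime of $R$ can only live on a component $R_i$ with $\dim R_i = d$). So assume $R$ is a regular domain of dimension $d$.

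For the inclusion ``$\supseteq$'': let $P \in \Min R/I$ with $\height P = d$, so $P$ is a maximal ideal and $\dim R_P = d$. Localizing at $P$ and using that local cohomology commutes with localization, $H^d_I(R)_P \cong H^d_{IR_P}(R_P) = H^d_{PR_P}(R_P)$ up to radical (since $P$ is minimal over $I$, $\rad(IR_P) = PR_P$), which is nonzero because $R_P$ is a $d$-dimensional local ring and $H^d_{\m}$ of a $d$-dimensional Noetherian local ring is nonzero by Grothendieck's non-vanishing theorem. Moreover $P R_P$ is the unique associated prime of $H^d_{PR_P}(R_P)$ (it is an Artinian-type statement: this module is the top local cohomology $H^d_\m$, supported only at $\m$), hence $P \in \Ass_R H^d_I(R)$.

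For the inclusion ``$\subseteq$'': let $P \in \Ass_R H^d_I(R)$. Since $H^d_I(R)_P \neq 0$ we must have $H^d_{IR_P}(R_P) \neq 0$, which forces $\dim R_P = d$, i.e.\ $\height P = d$ and $P$ is maximal. It remains to show $P$ is minimal over $I$. Suppose not; then $\dim R_P / I R_P \geq 1$, so in the complete local ring $\wh{R_P}$ there is a minimal prime $\q$ over $I\wh{R_P}$ with $\dim \wh{R_P}/\q \geq 1$. By the Hartshorne--Lichtenbaum vanishing theorem (cf.\ \cite[14.1]{a7}), $H^d_{I\wh{R_P}}(\wh{R_P}) = 0$; but $H^d_{IR_P}(R_P) \otimes_{R_P} \wh{R_P} \cong H^d_{I\wh{R_P}}(\wh{R_P})$ by flat base change, and $\wh{R_P}$ is faithfully flat over $R_P$, so $H^d_{IR_P}(R_P) = 0$, contradicting $H^d_I(R)_P \neq 0$. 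Hence $P \in \Min R/I$ with $\height P = d$.

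The step I expect to require the most care is the application of Hartshorne--Lichtenbaum in the ``$\subseteq$'' direction: one must pass to the completion $\wh{R_P}$ to invoke it in the form valid for complete local rings, check that $I\wh{R_P}$ indeed has a minimal prime $\q$ with $\dim \wh{R_P}/\q \geq 1$ (which follows from $\dim R_P/IR_P \geq 1$ together with the dimension formula for fibers of $R_P \to \wh{R_P}$, a flat local map with zero-dimensional closed fiber), and then descend the vanishing back to $R_P$ via faithful flatness. Everything else is a routine localization-and-completion argument.
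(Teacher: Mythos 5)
Your proof is correct and follows essentially the same approach as the paper: localize at $P$, pass to the completion $\widehat{R_P}$ (a domain since $R_P$ is regular), and apply the Hartshorne--Lichtenbaum vanishing theorem. The paper instead reduces to $I$ radical at the start rather than handling radicals locally, and is more terse about the ``$\supseteq$'' inclusion, but the substance is the same; your preliminary reduction to $R$ a domain is harmless but unnecessary since regularity already forces $\widehat{R_P}$ to be a domain.
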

\begin{proof}
We may assume $I$ is a radical ideal.
Suppose $P \in \Ass_R (H^d_I(R))$. Then as $\height P \geq d = \dim R$ we get that $P$ is a maximal ideal of $R$. Note $PR_P \in \Ass_{R_P} (H^d_{IR_P}(R_P))$ and thus
$P\widehat{R_P} \in \Ass_{\widehat{R_P}} (H^d_{I\widehat{R_P}}(\widehat{R_P}))$.
 
Notice $\widehat{R_P}$ is a domain and so by  Hartshorne-Lichtenbaum theorem, cf.
\cite[14.1]{a7} we get that $I\widehat{R_P}$ is a zero-dimensional ideal in $\widehat{R_P}$. It follows that $IR_P$ is zero-dimensional ideal in $R_P$. As $I R_P$ is a radical ideal we get that $IR_P = PR_P$. The result follows.
\end{proof}
If $M$ is an $R$-module then set 
\[
\Ass_R^i(M) = \{ P \mid P \in \Ass M \ \text{and} \ \height P = i \}.
\]
As an easy corollary to Proposition \ref{HL} we get
\begin{corollary}\label{HL-C}
Let $R$ be a regular ring of dimension $d$. Let $I$ be an ideal in $R$. Then
\[
\bigcup_{i \geq 0} \Ass_R^i(H^i_I(R))  = \Min R/I.
\]
\end{corollary}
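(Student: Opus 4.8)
The plan is to reduce both inclusions to Proposition \ref{HL} by localizing at the prime in question, using that local cohomology commutes with localization, that $\Ass_R(M)_P \cong \Ass_{R_P}(M_P) = \{QR_P : Q\in\Ass_R M,\ Q\subseteq P\}$, and the elementary fact that in a local ring of dimension $i$ the only prime of height $i$ is the maximal ideal.

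\textbf{The inclusion $\subseteq$.} Suppose $P \in \Ass^i_R(H^i_I(R))$, so $\height P = i$ and $P \in \Ass_R H^i_I(R)$. Since $\Ass \subseteq \Supp$ and $\Supp H^i_I(R) \subseteq V(I)$, we have $P \supseteq I$. Localizing at $P$ and using $(H^i_I(R))_P \cong H^i_{IR_P}(R_P)$ gives $PR_P \in \Ass_{R_P}(H^i_{IR_P}(R_P))$. Now $R_P$ is a regular local ring of dimension $i = \height P$, so Proposition \ref{HL} applied to $R_P$ identifies $\Ass_{R_P}(H^i_{IR_P}(R_P))$ with the set of minimal primes of $IR_P$ of height $i$, which (being height $i$ in an $i$-dimensional local ring) can only be $PR_P$. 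Hence $PR_P$ is a minimal prime of $IR_P$, and pulling back a chain $I\subseteq Q\subseteq P$ to $R_P$ shows $P$ is a minimal prime of $I$.

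\textbf{The inclusion $\supseteq$.} Let $P \in \Min R/I$ and set $g = \height P$. Then $PR_P$ is a minimal prime of $IR_P$ of height $g = \dim R_P$, so Proposition \ref{HL} for the regular local ring $R_P$ yields $PR_P \in \Ass_{R_P}(H^g_{IR_P}(R_P)) = \Ass_{R_P}\big((H^g_I(R))_P\big)$. Therefore $P \in \Ass_R H^g_I(R)$, and as $\height P = g$ this says $P \in \Ass^g_R(H^g_I(R)) \subseteq \bigcup_{i\geq 0}\Ass^i_R(H^i_I(R))$.

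The only point needing mild care is the height bookkeeping: a prime $P$ enters the displayed union only through the single cohomological degree $i = \height P$, which is exactly why, after passing to $R_P$, only the top local cohomology $H^{\dim R_P}_{IR_P}(R_P)$ is relevant. There is no real obstacle here; all the substance is already contained in Proposition \ref{HL}, i.e.\ in the Hartshorne--Lichtenbaum theorem, so I expect the argument to be short.
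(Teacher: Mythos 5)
Your proof is correct and takes essentially the same route as the paper: both directions reduce to Proposition \ref{HL} by localizing at the prime in question (the paper's forward inclusion uses the fact that $IR_{P_i}=P_iR_{P_i}$ for a minimal prime $P_i$ of a radical ideal, which is the same computation you invoke via Proposition \ref{HL} applied to $R_P$). You spell out a couple of routine steps the paper leaves implicit, but there is no substantive difference.
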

\begin{proof}
We may assume $I$ is a radical ideal.
Let $I = P_1\cap \cdots \cap P_r$. Notice $IR_{P_i} = P_iR_{P_i}$ for each $i$ and $R_{P_i}$ is a regular local ring of height $c_i = \height P_i$. Notice
$P_i \in  \Ass_R^{c_i}(H^{c_i}_I(R))$. Thus
\[
\Min R/I \subseteq \bigcup_{i \geq 0} \Ass_R^i(H^i_I(R)).
\]
Conversely let $P \in \Ass_R^i(H^i_I(R))$. We localize at $P$. Then $R_P$ is a regular local ring of dimension $i$. The result now follows from Proposition \ref{HL}.
\end{proof}
\begin{remark}\label{htg}
If $I$ is an ideal of height $g$ then it is well-known that
\[
\Ass H^g_I(R) = \{ P \mid P \supseteq I, \height P = g \}.
\]
\end{remark}
We now prove:
\begin{lemma}\label{ht1}
Let $R$ be a regular ring and let $I$ be an ideal of height one. Then $\Ass_R H^i_I(R)$ is finite for $i = 1,2$
\end{lemma}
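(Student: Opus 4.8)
The plan is to reduce everything to known facts plus Theorem \ref{main-T}. First I would handle $i=1$: since $I$ has height one, Remark \ref{htg} gives $\Ass_R H^1_I(R) = \{P \mid P \supseteq I,\ \height P = 1\}$, and this is precisely the set of minimal primes of $I$, hence finite (as $R$ is Noetherian). So the content is entirely in the case $i=2$.

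For $i=2$, the key observation is that $2$ should be compared with the dimension. If $\dim R \le 2$ there is nothing to do, since by Proposition \ref{HL} (the Hartshorne–Lichtenbaum description) $\Ass_R H^2_I(R) = \Ass_R H^{\dim R}_I(R)$ is a finite set when $\dim R = 2$, and $H^2_I(R) = 0$ when $\dim R < 2$ by Grothendieck vanishing. The substantive case is therefore $\dim R = d \ge 3$, and here the natural move is to localize: I would show that $P \in \Ass_R H^2_I(R)$ forces $\height P$ to be small, and then invoke Theorem \ref{main-T} applied to the regular local ring $R_P$. Concretely, for $P \in \Ass_R H^2_I(R)$ we have $PR_P \in \Ass_{R_P} H^2_{IR_P}(R_P)$, so $R_P$ is a regular local ring of dimension $c = \height P \ge 2$ with $H^2_{IR_P}(R_P) \ne 0$. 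If $c = 2$ then $PR_P$ is the maximal ideal and, again by Proposition \ref{HL}, this pins $P$ down to a minimal prime of $I$ of height $2$ — only finitely many such. If $c \ge 3$, then $H^2_{IR_P}(R_P)$ is a module over a regular local ring of dimension $c$, but $2 < c$; the useful case is $c = 3$, where $2 = d' - 1$ with $d' = \dim R_P = 3$, so Theorem \ref{main-T} says $\Ass_{R_P} H^2_{IR_P}(R_P)$ is a finite set. The remaining case $c \ge 4$ is where I expect the real obstacle: Theorem \ref{main-T} only controls the \emph{top} and \emph{second-from-top} local cohomology, so $H^2_I(R)$ with $\height P \ge 4$ is not directly covered.

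To close that gap I would argue that such primes simply cannot occur here. Since $\height I = 1$, write $I$ (up to radical) with a height-one component; the idea is that $H^i_I(R)$ for $i \ge 2$ is governed by the non-complete-intersection locus of $I$, and after localizing at a prime $P$ of height $\ge 4$ one can use a Mayer–Vietoris / comparison argument (splitting $I$ as an intersection of its height-one primary component with the rest) together with the fact that a height-one radical ideal in a regular local ring is locally principal in codimension $\le 1$, to show $H^2_{IR_P}(R_P)$ involves only $H^{\le 1}$ of auxiliary ideals of smaller height — which one then iterates. Alternatively, and more cleanly, I would prove directly that $\Ass_R H^2_I(R)$ has all its members of height $\le 3$: if $\height P \ge 4$ then $\depth R_P \ge 4$, and one shows $H^2_{IR_P}(R_P) = 0$ because an ideal of height one in a regular (hence Cohen–Macaulay, normal) local ring of depth $\ge 4$ cannot have $H^2$ supported at the maximal ideal, using the long exact sequence relating $H^*_{IR_P}$ and $H^*_{PR_P}$ and depth estimates. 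This last step — ruling out high-height associated primes for $H^2$ of a height-one ideal — is the part I expect to require the most care, and it is the natural place to combine Grothendieck vanishing, local duality over $R_P$, and the normality of regular rings.
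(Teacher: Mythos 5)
Your handling of $i=1$ and of the height-two associated primes is fine, but the rest of the argument has a genuine gap, and the overall strategy is mismatched with what the lemma actually asserts.

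First, a scope problem: Lemma~\ref{ht1} is stated for an \emph{arbitrary} regular ring, whereas Theorem~\ref{main-T} requires $R$ to be excellent and to contain a field of characteristic zero. Deducing the lemma from Theorem~\ref{main-T} would therefore only prove a restricted version of it. This matters downstream: Corollary~\ref{d3} is stated unconditionally for regular rings of dimension $\le 3$ and relies on Lemma~\ref{ht1} in this full generality. Second, even within those extra hypotheses, your treatment of the case $\height P = 3$ does not close: applying Theorem~\ref{main-T} to $R_P$ tells you that $\Ass_{R_P} H^2_{IR_P}(R_P)$ is finite, but that statement is local to each fixed $P$ and gives no control whatsoever on \emph{how many} height-three primes $P$ of $R$ lie in $\Ass_R H^2_I(R)$. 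To convert Theorem~\ref{main-T} into a bound on the number of such $P$, you would need the machinery of Section~4 (uncountable field, countable prime avoidance, localizing at the putative infinite family to produce a ring of dimension $3$), which you have not set up. Third, you explicitly flag the case $\height P \ge 4$ as unresolved and offer only heuristics; the depth/local-duality alternative you sketch does not obviously go through.

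The paper's proof is quite different and much more elementary, and it bypasses Theorem~\ref{main-T} entirely. Since $H^i_I = H^i_{\sqrt{I}}$ one may assume $I$ is radical, and then write $I = J\cap K$ with $J$ the intersection of the height-one minimal primes and $K$ the intersection of the remaining (height $\ge 2$) minimal primes. The key fact is that a height-one ideal in a regular local ring all of whose associated primes have height one is \emph{principal} (regular local rings are UFDs; cf. \cite[Exercise 2.2.28]{BH}), so $J$ is locally principal and $H^j_J(R) = 0$ for all $j \ge 2$. Since $I$ is radical, no minimal prime of $K$ can contain a minimal prime of $J$, whence $\height(J+K)\ge 3$. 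The Mayer--Vietoris sequence then yields the exactness of $0 \rt H^2_K(R) \rt H^2_I(R) \rt H^3_{J+K}(R)$, so $\Ass_R H^2_I(R) \subseteq \Ass_R H^2_K(R) \cup \Ass_R H^3_{J+K}(R)$, and both of these are finite (or empty) by Remark~\ref{htg} applied to ideals of height $\ge 2$ and $\ge 3$ respectively. The Mayer--Vietoris idea you mention in passing is in fact the right one; the missing ingredient is the observation that the pure height-one part is locally principal, which makes all of $H^{\ge 2}$ of that part vanish and hands you the short exact sequence for free.
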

\begin{proof}
For $i = 1$ the result follows from Remark \ref{htg}.

 For $i = 2$ we first consider the case when  $I$ has a primary decomposition $I = Q_1\cap \cdots \cap Q_r$ where $\height \sqrt{Q_i} = 1$ for all $i$. We claim that $H^j_I(R) = 0$ for all $j \geq 2$. Suppose this is not true. Let $P \in \Ass H^j_I(R)$ for some $j \geq 2$. We localize at $P$. We now note that $IR_P$ is a principal ideal, see \cite[Exercise 2.2.28]{BH}. So $H^s_{IR_P}(R_P) = 0$ for all $s \geq 2$, a contradiction. Therefore $H^j_I(R) = 0$ for all $j \geq 2$. Thus our assertion holds in this special case. 

Now let $I$ be a general ideal of height one. Then $I = J \cap K$ where $K$ has height $\geq 2$ and $J$ is an ideal of height one. Furthermore $J$ is of the special kind discussed above. So $H^j_J(R) = 0$ for $j \geq 2$. By Mayer-Vietoris sequence, cf., \cite[15.1]{a7},  and noting that $\height (J + K) \geq 3$ we have an exact sequence
\[
0 \rt H^2_K(R) \rt H^2_I(R) \rt H^3_{J+K}(R).
\]
As $\height K \geq 2$ and $\height (J+K) \geq 3$ the result follows from Remark \ref{htg}.
\end{proof}
An easy consequence of the above results is the following:
\begin{corollary}\label{d3}
Let $R$ be a regular ring of dimension $d  \leq 3$. Then for any ideal $I$ we have
$\Ass H^i_I(R)$ is a finite set for all $i \geq 0$.
\end{corollary}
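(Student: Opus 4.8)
The plan is to reduce the statement to a finite list of cases according to the height $g$ of (each minimal prime of) $I$ and the cohomological degree $i$, and then dispatch each case using the elementary tools already assembled in this section. First I would note that, by Corollary \ref{HL-C} together with Grothendieck vanishing, we only need to worry about $H^i_I(R)$ for $0 \le i \le d \le 3$, and that $H^0_I(R) = \Gamma_I(R) = 0$ since $R$ is a domain (or, after the standard reduction via \ref{red-domain}, a product of domains) with $I \neq 0$; the case $I = 0$ is trivial. So the genuine cases are $i \in \{1,2,3\}$.

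Next I would stratify by $\height I$. If $\height I = i$, then $\Ass H^i_I(R)$ is finite by Remark \ref{htg}. If $\height I > i$, then $H^i_I(R) = 0$ by Grothendieck's vanishing theorem for the grade (local cohomology vanishes below the grade, which equals $\height I$ since $R$ is Cohen--Macaulay). This already handles $i = 1$ entirely (either $\height I = 1$, giving finiteness by Remark \ref{htg}, or $\height I \ge 2$, giving vanishing), and it handles $i = 3$ whenever $\height I = 3$. The remaining cases are: $i = 2$ with $\height I = 1$; $i = 3$ with $\height I = 1$; and $i = 3$ with $\height I = 2$.

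For $i = 2$, $\height I = 1$, this is exactly Lemma \ref{ht1}, which gives finiteness. For $i = 3$ with $\height I \in \{1,2\}$, since $\dim R \le 3$ any prime $P$ of height $3$ is maximal, so $H^3_I(R) = H^d_I(R)$ and $\Ass H^3_I(R)$ is finite by Proposition \ref{HL} (indeed it equals the set of height-$3$ minimal primes of $I$, which is a finite set). Assembling: in every case $\Ass H^i_I(R)$ is either empty, finite by Remark \ref{htg}, finite by Lemma \ref{ht1}, or finite by Proposition \ref{HL}. This completes the argument. I do not anticipate any real obstacle here; the only point requiring a little care is the preliminary reduction to the domain case via \ref{red-domain} and the observation that the height and degree cases are genuinely exhausted by the list above — the substantive input (the $\height I = 1$, $i = 2$ case) has already been isolated as Lemma \ref{ht1}.
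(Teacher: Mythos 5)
Your proof is correct and uses essentially the same approach as the paper: both reduce to the domain case via Remark \ref{red-domain} and then dispatch the finitely many $(i,\height I)$ cases using Remark \ref{htg}, Lemma \ref{ht1}, Proposition \ref{HL}, and vanishing of $H^i_I(R)$ for $i < \operatorname{grade}(I)$. The only difference is organizational --- the paper stratifies first by $\dim R$ and then by $\height I$, whereas you stratify by the cohomological degree $i$ --- which is cosmetic.
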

\begin{proof}
By Remark \ref{red-domain} we may assume that $R$ is a domain.
We have nothing to show for $d = 0$. The assertion for $d = 1$ follows from Proposition \ref{HL}. For $d = 2$ the result follows from Lemma \ref{ht1} and 
Remark \ref{htg}.

Now consider the case when $d = 3$.  If $\height I = 1$ then the result follows from Lemma \ref{ht1} and  Proposition \ref{HL}. If $\height I = 2$ then the result follows from Remark \ref{htg} and  Proposition \ref{HL}. If $\height I = 3$ then the result follows from 
Proposition \ref{HL}.
\end{proof}

We now give a proof of Corollary \ref{d4}.
\begin{proof}
By Corollary \ref{d3} we may assume $\dim R = 4$.
By the results of \cite{HuSh}, the result holds when $\charr K = p > 0$. Thus we may assume that $\charr K = 0$.  By Remark \ref{red-domain} we may assume that $R$ is a domain. 

If $\height I = 1$ then the result follows from Lemma \ref{ht1}, Theorem \ref{main-T} and  Proposition \ref{HL}. If $\height I = 2$ then the result follows from Remark \ref{htg}, Theorem \ref{main-T} and  Proposition \ref{HL}. If $\height I = 3$ then the result follows from Remark \ref{htg} and
Proposition \ref{HL}. If $\height I = 4$ then the result follows from 
Proposition \ref{HL}.
\end{proof}

\section{Proof of Corollary \ref{main-app} and Proposition \ref{ogus}}
In this final section we give an application of our Theorem \ref{main-T}. Throughout $R$ will denote a regular ring of dimension $d$ containing a field of characteristic zero. We first give
\begin{proof}[Proof of Corollary \ref{main-app}]
Suppose if possible
\[
\bigcup_{i \geq 0} \Ass_R^{i+1}(H^i_I(R))  \quad \text{is an  infinite set}.
\]
As $H^i_I(R) = 0 $ for $i > d$ we get that $\Ass_R^{i+1}(H^i_I(R))$ is infinite for some $i \leq d$. By Corollary \ref{cc} we may assume that $R$ contains an uncountable field. Suppose $\Ass_R^{i+1}(H^i_I(R)) = \{ \p_n \}_{n \geq 1}$. Consider the ring
\[
A = S^{-1}R \quad \text{where} \ S = R \setminus \bigcup_{n \geq 1}\p_n.
\] 
Then $A$ is a regular ring of dimension $i+1$. Furthermore $\Ass_A H^i_{IA}(A)$ is an infinite set. This contradicts Theorem \ref{main-T}.
\end{proof}

Recall if $(A,\m)$ is a local ring 
then $\Spec^\circ(A) = \Spec(A) \setminus \{ \m \}$ considered as a subspace of $\Spec(A)$.
\begin{definition}
Let $(A,\m)$ be a local ring and  let $I$ be an ideal in $A$.  We say $\Spec^\circ(A/I)$ is \textit{absolutely connected} if for every flat local map $(A,\m) \rt (B,\n)$ with $\m B = \n$ and $B/\n$ algebraically closed,  $\Spec^\circ(B/IB)$ is connected 
\end{definition}
\begin{remark}\label{c-abs}
It is easy to see that if $\Spec^\circ(A/I)$ is absolutely connected then it is connected.
\end{remark}
We now give
\begin{proof}[Proof of Proposition \ref{ogus}]
First assume that $P \supseteq I$,  $\height P = c  \geq g + 2$ and $\Spec^\circ(R_P/I_P)$ is not absolutely connected. So there is a flat extension $(B,\n)$ of $R_P$ such that $B$ is complete, $PB = \n$, $B/\n$ algebraically closed and $\Spec^\circ(B/IB)$ is disconnected. We note that $B$ is a complete regular ring of dimension $c$. Also note that as $\dim R_P/I_P \geq 2$ we have that $\dim B/IB \geq 2$. By the result of Ogus \cite[2.11]{O} we have that $H^{c-1}_{IB}(B) \neq 0$. So $H^{c-1}_{IR_P}(R_P) \neq 0$.
Therefore $P \in \Supp H^{c-1}_{I}(R)$. We claim that $P$ is a minimal prime of
$H^{c-1}_{I}(R)$. Suppose there exists $Q \in \Supp H^{c-1}_{I}(R)$ and $Q \subsetneq P$. Then $\height Q \leq c - 1$. By Grothendieck vanishing theorem we have $\height Q \geq c -1$. So $\height Q = c - 1$. By our assumption we have that $\dim R_Q/IR_Q \geq 1$. So $\dim \widehat{R_Q}/I\widehat{R_Q} \geq 1$. By Hartshorne-Lichtenbaum theorem we have $H^{c-1}_{I\widehat{R_Q}}(\widehat{R_Q}) =0$. Thus $(H^{c-1}_I(R))_Q = H^{c-1}_{IR_Q}(R_Q) = 0$, a contradiction. Therefore $P$ is a minimal prime of $H^{c-1}_{I}(R)$ and so belongs to $\Ass^{c}(H^{c-1}_I(R))$.

Conversely assume $P \in \Ass^{c}(H^{c-1}_I(R))$. So $H^{c-1}_{IR_P}(R_P) \neq 0$. By Remark \ref{htg} we get that $c - 1 \geq g + 1$. So $c \geq g + 2$.  Let $R_P \rt B$ be any  local flat extension with $(B,\n)$ complete, $PB = \n$ and $B/\n$ algebraically closed. We note that by our assumptions $\dim R_P/I R_P \geq 2$. As $R_P/IR_P \rt B/IB$ is flat local map with fiber a field  we have that $\dim B/IB =  \dim R_P/I R_P \geq 2$. Also by faithful flatness $H^{c-1}_{IB}(B) \neq 0$. Thus again by the same result of Ogus, $\Spec^\circ(B/IB)$ is disconnected. Therefore $\Spec^\circ(R_P/I_P)$ is not absolutely connected. 
\end{proof}
\begin{remark}
The above result is also true in characteristic $p$ with the same proof. The reason is that Ogus Theorem is true in characteristic $p$, see \cite[Theorem III.5.5]{PS}.
\end{remark}

\end{document}